\DeclareMathAlphabet{\mathpzc}{OT1}{pzc}{m}{it}
\renewcommand\sp{\mathop{\mathrm{sp}}\nolimits}
\def\CB{\mathcal{B}}
\def\CF{\mathcal{F}}
\def\CT{\mathcal{T}}
\def\hdel{\widehat{\delta}}
\def\CM{\mathcal{X}}
\def\CN{\mathcal{Y}}
\def\O{\Omega}
\def\P{\mathbb{P}}
\def\PiK{\Pi^{\nabla,E}}
\def\PiN{\Pi^{\nabla}}
\def\Pio{\Pi^{E}}
\def\R{\mathbb{R}}
\def\Vh{V_h}
\def\VK{V^{E}_h}
\def\WK{\widetilde{V}_h^E}
\def\l{\lambda}
\def\g{\gamma}
\def\HusE{H^{1+s}(E)}
\newcommand{\vertiii}[1]{{\left\vert\kern-0.25ex\left\vert\kern-0.25ex\left\vert #1 
    \right\vert\kern-0.25ex\right\vert\kern-0.25ex\right\vert}}
\numberwithin{equation}{section}
\title{VEM discretization allowing small edges  for  the reaction-convection-diffusion equation: source and spectral problems\thanks{FL  was partially supported by DIUBB through project 2120173 GI/C Universidad del B\'io-B\'io and 
ANID-Chile through FONDECYT project 11200529 (Chile).GR was supported by Universidad de Los Lagos Regular R02/21.}}
\author{Felipe Lepe\thanks{GIMNAP-Departamento de Matem\'atica, Universidad del B\'io-B\'io, Casilla 5-C, Concepci\'on, Chile. \texttt{flepe@ubiobio.cl}.}
\and
Gonzalo Rivera\thanks{Departamento de Ciencias Exactas, Universidad de Los Lagos, Osorno, Chile.
\texttt{gonzalo.rivera@ulagos.cl}}}
\date{Draft version of \today.}
\begin{document}

\maketitle
\begin{abstract}
In this paper we analyze a lowest order virtual element method for the load classic reaction-convection-diffusion
problem and the convection-diffusion spectral problem,  where the assumptions on the polygonal meshes allow to consider small edges for the polygons. Under well defined seminorms depending on a suitable stabilization for this geometrical approach, we derive the well posedness of the numerical scheme and error estimates for the load problem, whereas for the spectral problem we derive convergence and error estimates fo the eigenvalues and  eigenfunctions.  We report numerical tests to asses the performance of the small edges on our numerical method for both problems under consideration.
\end{abstract}

\begin{keywords}
virtual element methods a priori error estimates, small edges.
\end{keywords}

\begin{AMS}
49K20, 
49M25, 
65N12, 
65N15,  
65N25, 
65N30, 
\end{AMS}

\section{Introduction}
\label{sec:introduccion}
Let  $\Omega \subset \mathbb{R}^2$ be an open,  bounded, and convex domain with polygonal boundary $\partial\Omega$. We are interested in the convection-diffusion problem
\begin{equation}
\label{eq:state_equation}
\nabla \cdot (-\kappa(\mathbf{x}) \nabla u)  + \mathbf{\vartheta}(\mathbf{x}) \cdot \nabla u +\gamma(\mathbf{x}) u = f \quad \textrm{in}~\Omega, \qquad u = 0 \quad \textrm{on}~\partial\Omega,
\end{equation}
where  $\kappa$ and $\gamma$ are smooth functions $\Omega\rightarrow\mathbb{R}$ with $\kappa(\mathbf{x})\geq\kappa_0>0$ for all $\mathbf{x}\in\Omega$ and  $\mathbf{\vartheta}$ is a smooth vector-valued function  $\Omega\rightarrow\mathbb{R}^2$.

It is well know that \eqref{eq:state_equation} is a mathematical model that represents a physical phenomenon 
involving particles, concentrations, fluids, etc., that are transferred inside a physical system due two processes: convection and diffusion.  The eigenvalue problem associated to this system, and variations of it, has been analyzed in the nowadays in \cite{MR2845628,MR3133493}.

The virtual element method (VEM), introduced in \cite{BBCMMR2013} for the first time for the Laplacian operator has shown great accuracy on the approximation of the solutions of partial differential equations, together with important reduction on computational costs, compared with other classic methods. Moreover, since VEM allows different geometries on the meshes, it is possible to implement it with excellent results in problems where partial differential equations are stated in domains which are not suitable, for instance, for the finite element method.  

The developments and applications fo VEM are increasing day by day. The literature
of VEM is quite extensive, and it is possible to find results in several problems for fluid and solid mechanics, electromagnetism, eigenvalue problems, parabolic problems, and adaptive methods, where primal and mixed formulations
have appeared to approximate several problems. We can mention  \cite{MR4441208,  AABMR13, ABM2022, ABMV2014, ABSVsinum16, AMSLP2018, BBDMR2017, BBM,CGPS,CG2017, FS-M2AN18, GMV2018, GV:IMA2017,  MR4253143, MR4229296, MRR2015}  and the references therein.

Despite to the fact of the important contributions of VEM in different subjects, there is a new approach to this method which for the best of the authors knowledge is available for  second order elliptic problems, where the standard hypotheses of \cite{BBCMMR2013} can be relaxed, and its related to the size of the edges of the elements on the polygonal meshes. We know so far that the classic VEM requires that the elements must be star shaped and with sufficiently large edges. It is precisely this last assumption that is relaxed in the works of \cite{BLR2017,MR3815658} where arbitrary edges or feces, depending  on the dimension in which the problem is stated,  are now allowed. These references show that for more general assumptions, and suitable stabilizations, it is possible to 
obtain stability of the VEM and error estimates. This is an ongoing subject of research, and the available results are for VEM spaces to discretize $H^1$. Also, recently on \cite{ALR:22,MR4359996,MR4284360,MR4461634}  is possible to find applications of the small edges approach.

It is important to take into account that the theoretical analysis for a VEM allowing small edges needs to pay a price, in the sense that not any system of partial differential equations allows to consider this approach. More precisely, the regularity of the solution plays a role for the analysis. In \cite{BLR2017,MR3815658} the authors have shown that when the solution of the PDE is such that $H^{1+s}$ with $s>3/2$, the approximation properties for the VEM with small edges hold. This implies that, under some geometrical hypotheses on the domain, boundary conditions, data, or physical parameters, this new nature of the  VEM is possible to be applied. Hence, for our purposes and for the best of our knowledge, we cannot go far from this regularity requirement.

In the present paper, we continue with our research program of VEM with small edges for second order elliptic problems. More precisely,  our  contribution is to apply this approach on two problems: in one hand, we have the convection-difussion-reaction problem and, on the other, the convection-difussion eigenvalue problem.  These two problems are of importance due to the applicability of such equations. For the convection-difussion-reaction problem, we focus on the load problem since in the eigenvalue problem the term associated to the reaction is similar to the right hand side that has the eigenvalue of the problem. This is the reason why we only consider the difussion-convection problem on the spectral setting. Also, since our intention is to apply the small edges scheme for the VEM, as we have claimed before, we need to operate under a suitable geometrical setting which in our case, consists in a open, bounded, and convex two dimensional domain with null Dirichlet boundary conditions. These assumptions will be required for both, the load and spectral problems.


The outline of our manuscript is the following: In section \ref{sec:model} we present the source model problem, the bilinear forms which consider, the well posedness and regularity 
properties. Section \ref{sec:virtual} is the core of our paper, where the virtual element methods are introduced, under the assumption of small edges for  the  polygonal meshes. Here
we define the local and global virtual spaces and the discrete bilinear forms. With these ingredients at hand, we discretize the source problem introduced in section \ref{sec:model}, proving the well posedeness of the discrete problem and, under the weaker assumptions of the mesh, we derive error estimates. As an application of the derived results so far, in section \ref{sec:spectral} we analyze the eigenvalue problem associated to \eqref{eq:state_equation}. Since the spectral problem is nonsymmetric, the analysis is performed introducing
the adjoint eigenvalue problem. Spurious free, convergence and error estimates results are proved for our proposed VEM. Finally, in section \ref{sec:numerics} we illustrate the theoretical results of the small edges approach  for the source and eigenvalue problem, reporting a set of numerical tests in different contexts and geometries.

\section{The variational problem}
\label{sec:model}

The weak formulation of \eqref{eq:state_equation} reads as follows: Find $u \in H_0^1(\Omega)$ such that
\begin{equation}
\label{eq:weak_state_equation}
\CB(u,v)=\CF(v) \quad \forall v \in H_{0}^{1}(\Omega),
\end{equation}
where $\CB:H_0^1(\Omega)\times H_0^1(\Omega)\rightarrow\mathbb{R}$ is the bilinear form  defined by 
\begin{equation}
\label{eq:bilinear_cont}
\CB(w,v):=a(w,v)+b(w,v)+c(w,v),\quad \forall w,v\in H_{0}^{1}(\Omega),
\end{equation}
and $\CF:H_0^1(\Omega)\rightarrow\mathbb{R}$  is the functional defined by 

\begin{equation}
\label{eq:functionaF}
	\CF(v)=\int_{\Omega}fv,\quad \forall v\in L^2(\Omega),
\end{equation}
\noindent respectively, with $a(\cdot,\cdot)$, $b(\cdot,\cdot)$,  and $c(\cdot,\cdot)$ being bounded bilinear forms  defined as follows
\begin{multline}
	a:H^1(\Omega)\times H^1(\Omega) \to \R; \qquad 
	a(w,v):=\int_{\Omega}\kappa(\mathbf{x}) \nabla w\cdot\nabla v,\forall w,v\in H_0^1(\Omega);
	\label{def_a}\\
	b:H^1(\Omega)\times L^2(\Omega) \to \R; \qquad 
	b(w,v):=\int_{\Omega}( \mathbf{\vartheta}(\mathbf{x})\cdot \nabla w)v,\forall w\in H_0^1(\Omega),v\in L^2(\Omega);\\
	c:L^2(\Omega)\times L^2(\Omega) \to \R; \qquad 
	c(w,v):=\int_{\Omega}\gamma(\mathbf{x}) wv,\forall w,v\in  L^2(\Omega).
\end{multline}


The assumptions on the coefficients on \eqref{eq:state_equation} lead us to the continuity of $\mathcal{B}(\cdot,\cdot)$, i.e, there exists a constant $M_1>0$ such that
\begin{equation}
\label{eq:acotamientoB}
\mathcal{B}(w,v)\leq M_1\|w\|_{1,\O}\|v\|_{1,\O}\quad\forall v,w\in H_0^1(\O).
\end{equation}

Also, the following condition holds
\begin{equation}
\label{eq:inf-supB}
\displaystyle\sup_{v\in H_0^1(\Omega)}\frac{\mathcal{B}(w,v)}{\|v\|_{1,\O}}\geq \beta\|w\|_{1,\O}\quad \forall w\in H_0^1(\O),
\end{equation}
where $\beta>0$ is a constant independent of $v$. Hence, \eqref{eq:acotamientoB} and \eqref{eq:inf-supB} implies the well posedness
of \eqref{eq:weak_state_equation}.

On the other hand, for the implementation of the virtual element method of our interest, the regularity of  \eqref{eq:weak_state_equation} is a key ingredient in order to
obtain approximation properties. Given $f\in H^{-1}$, there exist  positive constants $C^{*},C^{**}$ such that the solution $u$ of \eqref{eq:weak_state_equation}
satisfies
\begin{equation}
\label{eq:regularity}
\|u\|_{1,\Omega}\leq C^{*}\|f\|_{-1,\Omega}\quad\text{and}\quad
\|u\|_{2,\Omega}\leq C^{**}\|f\|_{0,\Omega}.
\end{equation}

\section{The virtual element method}
\label{sec:virtual}

In this section we briefly review a virtual element method (VEM) for te system \eqref{eq:weak_state_equation}. First we recall the mesh construction and the assumptions considered in \cite{BBCMMR2013} for the virtual element
method. 
Let $\left\{\CT_h\right\}_h$ be a sequence of decompositions of $\Omega$ into polygons, $E$. Let $h_E$ denote the diameter of the element $E$ and $h$ the maximum of the diameters of all the elements of the mesh, i.e., $h:=\max_{E\in\Omega}h_E$.  Moreover, for simplicity in what follows we assume that $\kappa $ and $\gamma$  are  piecewise constant with  respect to the decomposition $\mathcal{T}_h$, i.e., they are  piecewise constants for all $E\in \mathcal{T}_h$ (see for instance \cite{BLR2017}).

 For the analysis of the VEM, we will make as in \cite{BBCMMR2013} the following
assumption: there exists a positive real number $\rho$ such that, for every $ E \in\CT_{h}$ and for every $\CT_{h}$,
\begin{itemize}
\item \textbf{A1.} For all meshes
$\CT_h$, each polygon $E\in\CT_h$ is star-shaped with respect to a ball
of radius greater than or equal to $\rho h_{E}$.
\end{itemize}

For any simple polygon $ E$ we define 
\begin{align*}
	\widetilde{V}_h^E:=\{v_h\in H^{1}(E):\Delta v_h \in \mathbb{P}_1(E),
	v_h|_{\partial E}\in C^0(\partial E), v_h|_{e}\in \mathbb{P}_1(e) \  \forall e \in \partial E  \}.
\end{align*}

Now, in order to choose the degrees of freedom for $\widetilde{V}_h^E$ we define
\begin{itemize}
	\item $\mathcal{V}_E^h$: the value of $w_{h}$ at each vertex of $E$,
\end{itemize}
as a set of linear operators from $\widetilde{V}_h^E$ into $\R$. In \cite{AABMR13} it was established that $\mathcal{V}_E^h$ constitutes a set of degrees of freedom for the space $\widetilde{V}_h^E$.

 On the other hand, we define  the projector $\PiK:\ \WK\longrightarrow\P_1(E)\subseteq\WK$ for
each $v_{h}\in\WK$ as the solution of 
\begin{subequations}
\begin{align*}
\int_E (\nabla\PiK v_{h}-\nabla v_{h})\cdot\nabla q&=0
\qquad\forall q\in\P_1(E),
\\
\overline{\PiK v_{h}}&=\overline{v_{h}},
\end{align*}
\end{subequations}
where  for any sufficiently regular
function $v$, we set 
\begin{equation*}
\label{eq:fixed}
\overline{v}:=\vert\partial E\vert^{-1}\int_{\partial E}v.
\end{equation*}
We observe that the term $\PiK v_{h}$ is well defined and computable from the degrees of freedom  of $v$ given by $\mathcal{V}_E^h$, and in addition the projector $\PiK$ satisfies the identity  $\PiK(\P_{1}(E))=\P_{1}(E)$ (see for instance \cite{AABMR13}).

We are now in position  to introduce our local virtual space
\begin{equation}\label{Vk}
\VK
:=\left\{v_{h}\in 
\WK: \displaystyle \int_E \PiK v_{h}p=\displaystyle \int_E v_{h}p,\quad \forall p\in 
\mathbb{P}_1(E)\right\}.
\end{equation}
Now, since $\VK\subset \WK$, the operator $\PiK$ is well defined on $\VK$ and computable  only on the basis of the output values of the operators in $\mathcal{V}_E^h$.
In addition, due to the particular property appearing in definition of the space $\VK$, it can be seen that $\forall p \in \mathbb{P}_1(E)$ and $\forall v_h\in \VK$ the term $(v_h,p)_{0,E}$
is computable from $\PiK v_h$, and hence  the  ${\mathrm L}^2(E)$-projector operator $\Pio: \VK\to \P_1(E)$ defined  by
$$\int_E \Pi^{E}v_h=\int_E v_h p\qquad \forall p\in \mathbb{P}_1(E),$$
depends only on the values of the degrees of freedom of $v_h$.  Actually, it is easy to check that the projectors $\PiK$ and $\Pi^{E}$ are the same operators  on the space $\VK$ (see \cite{AABMR13} for further details).

Finally, for every decomposition $\CT_h$ of $\Omega$ into simple polygons $ E$ we define the global virtual space
\begin{equation}
\label{eq:globa_space}
\Vh:=\left\{v\in H^{1}(E):\ v|_{ E}\in\VK\quad\forall E\in\CT_h\right\},
\end{equation}
and the global degrees of freedom are obtained by collecting the local ones, with the nodal and interface degrees of freedom corresponding to internal entities counted only once
those on the boundary are fixed to be equal to zero in accordance with the ambient space $H_{0}^{1}(\Omega)$.

\subsection{Discrete formulation}
 In order to construct the discrete scheme, we need some preliminary definitions. First, we split the bilinear form $\CB(\cdot, \cdot)$ as follows:
 \begin{equation*}
\label{eq:bilineal_form_B_split}
\CB(w,v):=\sum_{ E\in\CT_h} \CB^{E}(w,v):=\sum_{ E\in\CT_h} a^{E}(w,v)+ b^{E}( w,v)+c^{E}( w,v)\,\, \forall w,v\in H^{1}(\Omega),
\end{equation*}
\noindent where
\begin{equation*}
	 a^{E}(w,v):=\int_E \kappa(x) \nabla w\cdot\nabla v, \quad
	  b^{E}(w,v):=\int_E(\mathbf{\vartheta}(x)\cdot \nabla w)v,\quad
	 c^{E}(w,v):=\int_E\gamma(x) w v.
\end{equation*}

Now, in order to propose the discrete bilinear form for $a(\cdot,\cdot)$ (cf. \eqref{def_a}), we consider the following symmetric and semi-positive definite bilinear form  $S^{E}:\VK\times\VK\to \mathbb{R}$ introduced in \cite{BLR2017}. For each $E\in \mathcal{T}_h$ and for all $w_h,v_h\in \VK$ we set
\begin{equation*}
\label{20}
S^{E}(w_h,v_h):=S^{\partial}(w_h,v_h), 
\end{equation*}
where, if $\partial_s$ denotes a derivative along the edge, $S^{\partial }(\cdot,\cdot)$ is defined by (see \cite{WRR2016})
\begin{equation}
\label{eq:derivative_stab}
S^{\partial }(w_h,v_h):=h_{E}\int_{\partial{E}}\partial_{s}w_h\partial_s
v_h\qquad\forall w_h,v_h\in\VK.
\end{equation}

Then, we introduce on each element $E$ the local (and computable) bilinear forms 
\begin{itemize}
\item $a_h^E(w_h, v_h)  :=a^{E}(\PiK w_h,\PiK v_h) +S^E( w_h-\PiK w_h, v_h-\PiK  v_h)$,
\vspace{0.4cm}
\item $b_h^E( w_h, v_h)  :=b^E(\PiK w_h,\Pi^{E}v_h)$,
\vspace{0.4cm}
\item $c_h^E( w_h, v_h)  := c^E(\Pi^{E}w_h,\Pi^{E}v_h)$,
\end{itemize}
for all $w_h,v_h\in \VK.$

Now we introduce the following discrete semi-norm:
\begin{equation}
\label{eq:triple}
\vertiii{v}_{E}^2:=a^{E}\big(\PiK v,\PiK v)+S^{E}(v-\bar{v},v-\bar{v})\qquad\forall
v\in\VK+\mathcal{V}^{E},
\end{equation}
where  $\mathcal{V}^{E}\subseteq H^1(E)$ is a subspace of sufficiently
regular functions for $S^{E}(\cdot,\cdot)$ to make sense.
 
For any sufficiently regular functions,
we introduce the following global semi-norms
$$
\vertiii{v}^2:=\sum_{E\in\CT_h}\vertiii{v}_{E}^2,\qquad
\left|v\right|_{1,h}^2
:=\sum_{E\in\CT_h}\left\|\nabla v\right\|_{0,E}^2.
$$
It has been proved in  \cite[Lemma~3.1]{BLR2017} the existence of  positive constants $C_1,C_2, C_3$, independent of $h$, but depending on $\kappa$
such that
\begin{align}
C_1\vertiii{v}_{E}^2\le a_h^{E}(v,v)\le C_2\vertiii{v}_{E}^2\quad\forall v\in\VK,\label{eqrefgt1}\\
a_h^{E}(v,v)\le C_3(\vertiii{v}^2+\vert v\vert_{1,E}^2)\quad\forall v\in\VK.\label{eqrefgt12}
\end{align}
In addition, it holds
\begin{align}
a^{E}(v,v)\le C_4\vertiii{v}_{E}^2\quad\forall v\in\VK,\label{eqrefgt2}\\
\vertiii{p}_{E}^2\le C_5 a^{E}(p,p)\quad\forall p\in\P_1(E),\label{eqrefgt3}
\end{align}
where $C_4,C_5$ are positive constants independent of $h$.

As is customary, the bilinear form $\mathcal{B}_h(\cdot,\cdot)$ can be expressed componentwise as follows
 \begin{equation}
\label{eq:bilineal_form_B_split_{h}}
\CB_{h}(w_{h},v_{h}):= \sum_{ E\in\CT_h}\CB_{h}^{E}(w_{h},v_{h})
= \sum_{ E\in\CT_h} a_{h}^{E}( w_{h}, v_{h})+ b_{h}^{E}( w_{h},v_{h})+ c_{h}^{E}( w_{h},v_{h}).
\end{equation}

Now, we are in a position to write the virtual element discretization   for problem \eqref{eq:weak_state_equation}: Find $u_{h} \in \Vh$ such that
\begin{equation}
\label{eq:virtual}
\CB_{h}(u_{h},v_{h})=\mathcal{F}(v_{h}) \quad \forall v_{h} \in \Vh,
\end{equation}
where $\mathcal{F}(v_{h})=\displaystyle(f_h,v_h)_{0,\O}$ with $f_{h}:=\Pi^{E} f$ and $(\cdot,\cdot)_{0,\O}$ denotes the inner product in $L^2(\O)$. It is clear that  $\mathcal{B}_h(\cdot,\cdot)$ is continuous, i.e, 
\begin{equation}\label{eq:cont_disc}
\mathcal{B}_h(w_h,v_h)\leq M_2\vertiii{w_h}\vertiii{v_h}\quad v_h,w_h\in V_h^E,
\end{equation}
with $M_2$ being a  constant independent of $h$.

From  the definition of the bilinear form $a_h(\cdot,\cdot)$, it is easy to check that this discrete bilinear form is coercive in $V_h$. Indeed, let $v_h\in V_h$. Then, 
\begin{multline}
\label{eq.elliptich}
a_{h}(v_h,v_h)
=\sum_{E\in\CT_h}a_h^{E}\big(v_h,v_h\big)
\geq\sum_{E\in\CT_h}C_1\vertiii{v_{h}}_{E}^2
\geq C\sum_{E\in\CT_h}a^{E}\big(v_h,v_h\big)\\
\geq C\vert v_h\vert_{1,\O}^2
\geq \beta\left\|v_h\right\|_{1,\O}^2
\qquad\forall v_h\in\Vh,
\end{multline}
where we have used \eqref{eqrefgt1}, \eqref{eqrefgt2} and
the generalized Poincar\'e inequality.

On the other hand, we also have the following approximation result for polynomials in star-shaped domain (see for instance \cite{BS-2008}).
\begin{lemma}
\label{lmm:bh}
If the assumption {\bf A1} is satisfied, then there exists a constant
$C$, depending only on $k$ and $\g$, such that for every $s$ with 
$0\le s\le k$ and for every $v\in\HusE$, there exists
$v_{\pi}\in\P_k(E)$ such that
$$
\left\|v-v_{\pi}\right\|_{0,E}
+h_{E}\left|v-v_{\pi}\right|_{1,E}
\le Ch_{E}^{1+s}\left\|v\right\|_{1+s,E}.
$$
\end{lemma}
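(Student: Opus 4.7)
The plan is to invoke the standard theory of averaged (Sobolev) Taylor polynomials on star-shaped domains, as developed in Chapter 4 of Brenner--Scott, and to obtain the stated estimate by combining the $L^2$ and $H^1$ bounds with appropriate powers of $h_E$. The hypothesis \textbf{A1} is exactly what is needed: since $E$ is star-shaped with respect to a ball $B\subset E$ of radius at least $\rho h_E$, the chunkiness parameter of $E$ is bounded in terms of $\rho$, and all constants arising from the averaging construction are controlled solely by $k$ and $\rho$.

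Concretely, I would define $v_\pi := Q^k v$, the Taylor polynomial of degree $k$ of $v$ averaged over the ball $B$; this operator is well defined on $H^{1+s}(E)$ for any $0\le s\le k$ because the trace of $v$ and its derivatives on $B$ make sense, and $v_\pi\in\P_k(E)$ by construction. The Bramble--Hilbert lemma in its Dupont--Scott form (Lemma 4.3.8 in Brenner--Scott) then yields, for integer $m$ with $0\le m\le k+1$,
\begin{equation*}
|v-Q^k v|_{m,E}\;\le\; C(k,\rho)\, h_E^{1+s-m}\,|v|_{1+s,E},
\end{equation*}
first for integer $s$ and then, by standard interpolation between Sobolev spaces of integer order, for any real $s$ with $0\le s\le k$.

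Taking $m=0$ and $m=1$ in this inequality and combining them produces
\begin{equation*}
\|v-v_\pi\|_{0,E} + h_E\,|v-v_\pi|_{1,E}
\;\le\; C(k,\rho)\,\bigl(h_E^{1+s}+h_E\cdot h_E^{s}\bigr)\,|v|_{1+s,E}
\;\le\; C\,h_E^{1+s}\,\|v\|_{1+s,E},
\end{equation*}
which is the desired estimate (using $|v|_{1+s,E}\le\|v\|_{1+s,E}$).

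The only real subtlety is handling non-integer $s$: for integer values the Dupont--Scott argument is completely classical, but for fractional $s$ one must invoke interpolation between the integer-order results (or, alternatively, use the fractional Bramble--Hilbert lemma in its direct form). This is the step I would flag as the main technical point; the rest is bookkeeping of powers of $h_E$ and tracking that the constant depends only on the polynomial degree $k$ and on the shape-regularity constant $\rho$, not on any lower bound on edge lengths. In particular, no assumption on edges enters, which is crucial for later use under the small-edge regime.
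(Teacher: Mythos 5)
Your argument is correct and is precisely the route the paper intends: the paper gives no proof of this lemma, simply citing the classical Scott--Dupont theory in Brenner--Scott, which is exactly the averaged-Taylor-polynomial and Bramble--Hilbert machinery (plus interpolation of Sobolev spaces for non-integer $s$) that you spell out. Your identification of the constant's dependence on $k$ and the shape parameter $\rho$ (rather than the paper's typographical ``$\gamma$'') and your remark that no lower bound on edge lengths is needed are both accurate.
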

The next step is to find appropriate terms $u_I$ and $u_{\pi}$ that can
be used in the above lemma to prove the claimed convergence. For the
latter we have the following proposition, which is derived by
interpolation between Sobolev spaces (see for instance
\cite[Theorem~I.1.4]{MR851383}) from the analogous result for integer values
of $s$. In its turn, the result for integer values is stated in
\cite[Proposition~4.2]{BBCMMR2013} and follows from the classical
Scott-Dupont theory (see \cite{BS-2008}).

The following, proved in \cite[Lemma 4.3]{MR4284360},
is an extension of \cite[Proposition~4.3]{BBCMMR2013} to
less regular functions.

\begin{lemma}
\label{estima2}
Under the assumption  {\bf A1}, for
each $s$ with $0<s\le k$, there exist $\widehat{\sigma}$ and a constant $C$, depending only
on $k$, such that for every $v\in H^{1+s}(\O)$, there exists
$v_I\in\Vh$ that satisfies
\begin{align*}
\left|v-v_I\right|_{1+t,E}&\le Ch_{E}^{s-t}\left|v\right|_{1+s,E}\qquad 0\leq t\leq\min\{\widehat{\sigma},s\},\\ 
\left\|v-v_I\right\|_{0,E}
&\le Ch_{E}\left|v\right|_{1+s,E}.
\end{align*}
\end{lemma}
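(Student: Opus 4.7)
My plan is to construct $v_I$ locally, establish endpoint estimates at integer regularity, and then obtain the intermediate inequalities by real interpolation between Sobolev spaces. The construction itself is the easy part: on each polygon $E$, define $v_I\in \VK$ as the unique element of $\VK$ whose vertex values coincide with those of $v$. This is meaningful because $s>0$ in two dimensions yields the embedding $H^{1+s}(\Omega)\hookrightarrow C^0(\overline\Omega)$, so the degrees of freedom $\mathcal{V}_E^h$ act on $v$. Gluing these local interpolants gives a globally conforming $v_I\in \Vh$, since the vertex degrees of freedom are shared across elements.

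For the endpoint $s=1$ (the largest integer $\le k=1$), I would invoke the classical Scott-Dupont theory already recorded as Proposition~4.3 of \cite{BBCMMR2013}, which yields
\[
\left|v-v_I\right|_{1,E}+h_E^{-1}\left\|v-v_I\right\|_{0,E}\le C h_E\left|v\right|_{2,E},
\]
and a companion higher-order estimate $\left|v-v_I\right|_{1+t,E}\le C h_E^{1-t}\left|v\right|_{2,E}$ for $t\in[0,\widehat\sigma]$; here $\widehat\sigma>0$ is the interior regularity exponent of the VEM space, coming from the fact that functions in $\VK$ satisfy $\Delta v_h\in\P_1(E)$ and are piecewise linear on $\partial E$, so elliptic regularity gives $v_h\in H^{1+\widehat\sigma}(E)$ uniformly. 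For the other endpoint $s\to 0^+$ I would establish the stability bound $\left|v-v_I\right|_{1,E}\le C\left|v\right|_{1,E}$ using the boundedness of the nodal interpolation together with a trace/inverse estimate on $\VK$ in the spirit of \cite{BLR2017}.

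With these two endpoints in hand, the fractional estimate is obtained by the $K$-method of real interpolation between Hilbert Sobolev spaces, as stated in \cite[Theorem~I.1.4]{MR851383}. The operator $I-I_h:H^{1+s}(E)\to H^{1+t}(E)$ is bounded for the two integer values of $s$ with the scalings above, and by interpolation one gets the required $h_E^{s-t}$ scaling for every $s\in(0,1]$ and every $t\in[0,\min\{\widehat\sigma,s\}]$. The $L^2$ bound is obtained analogously (or directly from a Poincaré-type argument on $E$) by interpolating the endpoint $\|v-v_I\|_{0,E}\le Ch_E^2|v|_{2,E}$ with the $s=0$ stability $\|v-v_I\|_{0,E}\le C h_E|v|_{1,E}$, yielding the announced $C h_E\left|v\right|_{1+s,E}$ after using $|v|_{1,E}\le |v|_{1+s,E}$ on a fixed-size reference scale.

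The main obstacle is the endpoint $s\to 0$: one must make precise in what sense $v_I$ is defined and stable when $v$ has only $H^1$ regularity, since then vertex values are not individually controlled. I would circumvent this by working, for the interpolation argument, in the shifted pair $(H^{1}(E),H^{2}(E))$ only after a small lift so that both endpoints enjoy pointwise values; equivalently, one can apply the interpolation theorem directly in the scale $(H^{1+\varepsilon},H^{2})$ for an arbitrarily small fixed $\varepsilon>0$ and then absorb $\varepsilon$ into $s$. The other delicate point is the exponent $\widehat\sigma$, which must be chosen uniformly in $h$; this is where assumption \textbf{A1} enters, through a scaling argument reducing to a bounded reference configuration on which interior regularity of the VEM space is a property of a finite-dimensional problem.
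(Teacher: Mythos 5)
Your proposal follows essentially the same route the paper relies on: the paper does not prove this lemma in-house but cites \cite[Lemma 4.3]{MR4284360}, and the surrounding text describes the argument exactly as you do --- the integer-regularity endpoint estimates from classical Scott--Dupont theory combined with real interpolation between Sobolev spaces (\cite[Theorem~I.1.4]{MR851383}), with $\widehat{\sigma}$ arising as a uniform interior regularity exponent of the local virtual space under assumption \textbf{A1}. Your sketch is consistent with that strategy, including the correct identification of the delicate $s\to 0^{+}$ endpoint and a workable way around it.
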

In order to prove that the virtual element discretization \eqref{eq:virtual} is well defined, we need the following technical results.
\begin{lemma}
\label{lmm:technical1}
For every $w^*\in H_{0}^{1}(\O)$ there exists  $w^*_{h}\in V_h$ such that 
\begin{equation}\label{eq:auxiliar}
a_{h}(w_{h}^*,v_{h})=a(w^*,v_{h})\qquad \forall v_{h}\in V_h.
\end{equation}
Moreover, there exists a constant $C_\kappa$, independent of $h$, such that
\begin{equation}\label{eq:auxiliarII}
h\|w^*-w_{h}^*\|_{1,\O}+\|w^*-w_{h}^*\|_{0,\O}\leq C_\kappa h\|w^*\|_{1,\O}.
\end{equation}
\end{lemma}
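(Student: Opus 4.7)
The plan is to proceed in three stages: well-posedness of the discrete auxiliary problem, then the $H^1$ stability bound, and finally the $L^2$ bound via an Aubin--Nitsche duality argument.

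For the existence of $w_h^*\in V_h$, I would invoke Lax--Milgram on the finite-dimensional space $V_h$. The bilinear form $a_h(\cdot,\cdot)$ is continuous on $V_h\times V_h$ through \eqref{eq:cont_disc} (restricted to its diffusive component, or equivalently directly from \eqref{eqrefgt1}--\eqref{eqrefgt2}), and it is coercive by the chain of inequalities in \eqref{eq.elliptich}, which yields $a_h(v_h,v_h)\ge \beta\|v_h\|_{1,\Omega}^2$. The right-hand side $v_h\mapsto a(w^*,v_h)$ is a bounded linear functional on $V_h\subset H_0^1(\Omega)$, with bound $\|\kappa\|_{\infty,\Omega}\|w^*\|_{1,\Omega}$. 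Existence and uniqueness of $w_h^*$ follow.

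For the $H^1$ part of \eqref{eq:auxiliarII}, I would simply test \eqref{eq:auxiliar} with $v_h=w_h^*$, using the coercivity of $a_h$ on one side and the continuity of $a$ on the other:
\begin{equation*}
\beta\|w_h^*\|_{1,\Omega}^2 \le a_h(w_h^*,w_h^*) = a(w^*,w_h^*) \le \|\kappa\|_{\infty,\Omega}\|w^*\|_{1,\Omega}\|w_h^*\|_{1,\Omega},
\end{equation*}
so that $\|w_h^*\|_{1,\Omega}\le C_\kappa\|w^*\|_{1,\Omega}$; a triangle inequality then gives $h\|w^*-w_h^*\|_{1,\Omega}\le C_\kappa h\|w^*\|_{1,\Omega}$.

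For the $L^2$ bound, I would set up the adjoint problem: let $\phi\in H_0^1(\Omega)$ solve $-\nabla\!\cdot\!(\kappa\nabla\phi)=w^*-w_h^*$ in $\Omega$. Since $\Omega$ is convex, the regularity \eqref{eq:regularity} yields $\|\phi\|_{2,\Omega}\le C\|w^*-w_h^*\|_{0,\Omega}$. Let $\phi_I\in V_h$ be the interpolant supplied by Lemma~\ref{estima2} with $s=1$, so that $\|\phi-\phi_I\|_{1,\Omega}\le Ch\|\phi\|_{2,\Omega}$, and let $\phi_\pi$ denote the piecewise polynomial approximant from Lemma~\ref{lmm:bh}. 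Writing
\begin{equation*}
\|w^*-w_h^*\|_{0,\Omega}^2 = a(w^*-w_h^*,\phi) = a(w^*-w_h^*,\phi-\phi_I) + \bigl[a(w^*,\phi_I)-a(w_h^*,\phi_I)\bigr],
\end{equation*}
and using \eqref{eq:auxiliar} to replace $a(w^*,\phi_I)$ by $a_h(w_h^*,\phi_I)$, the second bracket becomes the consistency gap $a_h(w_h^*,\phi_I)-a(w_h^*,\phi_I)$. I would handle it elementwise by exploiting polynomial consistency: on each $E$, adding and subtracting $\phi_\pi$ gives $a_h^E(w_h^*,\phi_I)-a^E(w_h^*,\phi_I)=a_h^E(w_h^*,\phi_I-\phi_\pi)-a^E(w_h^*,\phi_I-\phi_\pi)$, after which \eqref{eqrefgt1}--\eqref{eqrefgt3}, Lemma~\ref{lmm:bh} with $s=1$, and the interpolation estimates of Lemma~\ref{estima2} yield the bound $Ch\|w_h^*\|_{1,\Omega}\|\phi\|_{2,\Omega}$. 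Combining everything with the already-established $H^1$ stability,
\begin{equation*}
\|w^*-w_h^*\|_{0,\Omega}^2 \le Ch\|w^*\|_{1,\Omega}\|\phi\|_{2,\Omega} \le C_\kappa h\|w^*\|_{1,\Omega}\|w^*-w_h^*\|_{0,\Omega},
\end{equation*}
which gives the desired $L^2$ estimate.

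The main obstacle is controlling the consistency error under the small-edges hypothesis: unlike the classical VEM setting, one cannot rely on a scaled trace inequality on small edges, and the stabilization $S^\partial$ of \eqref{eq:derivative_stab} must be bounded through the triple-bar seminorm using precisely \eqref{eqrefgt1} and \eqref{eqrefgt3} together with the boundary-based interpolation properties of $\phi_I-\phi_\pi$. Since $\phi\in H^2$ is only guaranteed (not $H^{1+s}$ with $s>3/2$), this step is tight and is where the convexity of $\Omega$ is essential.
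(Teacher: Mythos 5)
Your proposal is correct and follows essentially the same route as the paper: Lax--Milgram via the coercivity \eqref{eq.elliptich} for existence, an $H^1$ stability bound, and an Aubin--Nitsche duality argument in which \eqref{eq:auxiliar} converts the second term into the consistency gap $a_h(w_h^*,\phi_I)-a(w_h^*,\phi_I)$, handled elementwise by polynomial consistency and a triple-norm bound on the stabilization. The only (cosmetic) difference is that you subtract the Scott--Dupont approximant $\phi_\pi$ where the paper subtracts $\PiK\phi_I$; both lie in $\P_1(E)$ and lead to the same $O(h)$ bound, and you correctly identify the estimate $\vertiii{\phi_I-\cdot}_E\le Ch_E|\phi|_{2,E}$ for the boundary stabilization as the delicate step under the small-edges hypothesis.
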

\begin{proof}
From \eqref{eq.elliptich}, a simple application of Lax-Milgram lemma implies that \eqref{eq:auxiliar} has a unique solution and $\vertiii{w_{h}^*}\leq C\|w^*\|_{1,\O}$. 
On the other hand, to prove \eqref{eq:auxiliarII}, we note that:
\begin{align}\label{eq:intermediaI}
\nonumber 
|w^*-w_{h}^*|_{1,\O}^{2}&\leq C_\kappa(a(w^*,w^*-w_{h}^*)-a(w_{h}^*,w^*-w_{h}^*))\\\nonumber 
&\leq C_\kappa\left(|w^*|_{1,\O}+|w^*_h|_{1,\O}\right)|w^*-w_{h}^*|_{1,\O}\\
&\leq \C_\kappa\left(|w^*|_{1,\O}+\vertiii{w_{h}^*}\right)|w^*-w_{h}^*|_{1,\O}\leq C_\kappa|w^*|_{1,\O}|w^*-w_{h}^*|_{1,\O},
\end{align}
where we have used \eqref{eqrefgt2}. Now to control the error in $L^2$ norm we use a standard  duality argument.  Let $\phi\in H^{2}(\O)\cap H_{0}^{1}(\O)$ be the solution of
\begin{equation}\label{eq:duality}
a(w,\phi)=(w^*-w_{h}^*,w)_{0,\O}\qquad w\in H_{0}^{1}(\O),
\end{equation}
and let $\phi_{I}\in V_h^E$ be its interpolant.  Then, from Lemma \ref{estima2} we have
\begin{equation}\label{eq:aprox_dual}
|\phi-\phi_{I}|_{1,\O}\leq Ch|\phi|_{2}\leq Ch\|w^*-w_{h}^*\|_{0,\O}.
\end{equation} 
Then, testing  \eqref{eq:duality} with $w=w^*-w_{h}^*\in H_{0}^{1}(\O)$, and using \eqref{eq:auxiliar} we obtain
\begin{multline}\label{eq:intermedia}
\|w^*-w_{h}^*\|_{0,\O}^{2}=a(w^*-w_{h}^*,\phi)=a(w^*-w_{h}^*,\phi-\phi_{I})+a(w^*-w_{h}^*,\phi_{I})\\
=a(w^*-w_{h}^*,\phi-\phi_{I})+a_{h}(w_{h}^*,\phi_{I})-a(w_{h}^*,\phi_{I})\\
=a(w^*-w_{h}^*,\phi-\phi_{I})+\sum_{E\in\CT_{h}}\left(a_{h}^{E}(w_{h}^*,\phi_{I}-\PiK\phi_{I})-a^{E}(w_{h}^*,\phi_{I}-\PiK\phi_{I})\right)\\
\leq C_\kappa\left(|w^*-w_{h}^*|_{1,\O}|\phi-\phi_{I}|_{1,\O}+\sum_{E\in\CT_{h}}\vertiii{w_{h}^*}_{E}\vertiii{\phi_{I}-\PiK\phi_{I}}_{E}\right.\\
\left.+\sum_{E\in\CT_{h}}|w_{h}^*|_{1,E}|\phi_{I}-\PiK\phi_{I}|_{1,E}\right)\\
\leq C_\kappa\left(h\|w^*\|_{1,\O}|\phi|_{2,\O}+\sum_{E\in\CT_{h}}\vertiii{w_{h}^*}_{E}\vertiii{\phi_{I}-\PiK\phi_{I}}_{E}\right),
\end{multline}
where, for the last estimate, we have used \eqref{eq:intermediaI}, \eqref{eq:aprox_dual} and \eqref{eqrefgt2}.
The following step is to bound the last term of the above estimate. With this purpose, we first note that from \eqref{eqrefgt1}, 
 the definition of $a_h^E(\cdot,\cdot)$ and operating as in the proof of \cite[Theorem 4.5]{BLR2017}, together with the fact that $\Pi^{\nabla,E}\phi_I$ is the best approximation of 
 $\phi_I$ through polynomials of degree $k$,  we obtain
\begin{align*}
C_{1}&\vertiii{\phi_{I}-\PiK\phi_{I}}_{E}^{2}\leq a_{h}^{E}(\phi_{I}-\PiK\phi_{I},\phi_{I}-\PiK\phi_{I})\\
&=S^{E}(\phi_{I}-\PiK\phi_{I},\phi_{I}-\PiK\phi_{I})\leq h_{E}|\phi_{I}-\PiK\phi_{I}|_{1,\partial E}^2\\
&\leq C_\kappa (|\phi_{I}-\PiK\phi_{I}|_{1,E}^2+h^2|\phi_{I}|_{2,E}^2.)\\
&\leq C_\kappa(|\phi_{I}-\PiK\phi|_{1,E}^2+h^2|\phi_{I}-\phi|_{2,E}^2+h^2|\phi|_{2,E}^2)\leq Ch^2|\phi|_{2,E}^2,
\end{align*}
allowing us to conclude that 
\begin{align*}
\sum_{E\in\CT_{h}}\vertiii{w_{h}^*}_{E}\vertiii{\phi_{I}-\PiK\phi_{I}}_{E}\leq C h\vertiii{w_{h}^*}|\phi|_{2,\O}.
\end{align*}
Therefore, from  the above estimation, together with \eqref{eqrefgt2},  \eqref{eq:intermedia} and  the fact that $\vertiii{w_{h}^*}\leq C\|w^*\|_{1,\O}$, we obtain 
\begin{align*}
\|w^*-w_{h}^*\|_{0,\O}^{2}\leq C_\kappa h\|w^*\|_{1,\O}\|w^*-w_{h}^*\|_{0,\O}.
\end{align*}
This concludes the proof.
\end{proof}

In order to state the well posedness of the discrete problem \eqref{eq:virtual},  the following discrete inf-sup condition is essential.
\begin{lemma}
\label{lmm:disc_inf_sup}
There exists a constant $\widehat{\beta}>0$ such that, for all $h<h_0$:
\begin{equation}\label{eq:inf-supBh}
\displaystyle\sup_{w_h\in V_h}\frac{\mathcal{B}_h(v_h,w_h)}{\|w_h\|_{1,\O}}\geq\widehat{\beta}\|v_h\|_{1,\Omega}\quad\forall v_h\in V_h.
\end{equation}
\end{lemma}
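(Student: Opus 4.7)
The plan is to transfer the continuous inf--sup condition \eqref{eq:inf-supB} to the discrete setting by a Schatz--type perturbation argument, in which Lemma \ref{lmm:technical1} supplies the ``discrete companion'' of the continuous test function. Given $v_h\in V_h$, by \eqref{eq:inf-supB} we can select $w^\ast\in H_0^1(\Omega)$ with $\|w^\ast\|_{1,\Omega}=\|v_h\|_{1,\Omega}$ and $\mathcal{B}(v_h,w^\ast)\ge \beta\|v_h\|_{1,\Omega}^2$. Using Lemma \ref{lmm:technical1} we associate to $w^\ast$ its elliptic discrete lift $w_h^\ast\in V_h$ satisfying \eqref{eq:auxiliar} and \eqref{eq:auxiliarII}. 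The test function for the discrete inf--sup will be $w_h^\ast$.

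The core of the argument is to write
\[
\mathcal{B}_h(v_h,w_h^\ast)=\mathcal{B}(v_h,w^\ast)+\bigl[\mathcal{B}_h(v_h,w_h^\ast)-\mathcal{B}(v_h,w^\ast)\bigr],
\]
and to show that the residual is of order $h\,\|v_h\|_{1,\Omega}\|w^\ast\|_{1,\Omega}$. Because $a$ and $a_h$ are symmetric, the defining property \eqref{eq:auxiliar} gives $a_h(v_h,w_h^\ast)=a(v_h,w^\ast)$ exactly, so the diffusion contribution to the residual vanishes. What remains is
\[
\bigl[b_h(v_h,w_h^\ast)-b(v_h,w^\ast)\bigr]+\bigl[c_h(v_h,w_h^\ast)-c(v_h,w^\ast)\bigr].
\]
For the reaction piece, write $c_h^E(v_h,w_h^\ast)-c^E(v_h,w^\ast)=c^E(\Pi^E v_h-v_h,\Pi^E w_h^\ast)+c^E(v_h,\Pi^E w_h^\ast-w_h^\ast)+c^E(v_h,w_h^\ast-w^\ast)$; the first two terms are controlled by the standard $L^2$ approximation properties of $\Pi^E$, and the last by the $L^2$ estimate in \eqref{eq:auxiliarII}, yielding an $O(h)\|v_h\|_{1,\Omega}\|w^\ast\|_{1,\Omega}$ bound. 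The convection piece is treated analogously, splitting
\[
b_h^E(v_h,w_h^\ast)-b^E(v_h,w^\ast)=\bigl[b^E(\Pi^{\nabla,E}v_h-v_h,\Pi^E w_h^\ast)\bigr]+\bigl[b^E(v_h,\Pi^E w_h^\ast-w_h^\ast)\bigr]+b^E(v_h,w_h^\ast-w^\ast),
\]
and using the approximation properties of $\Pi^{\nabla,E}$ on $\nabla v_h$ together with those of $\Pi^E$ on $w_h^\ast$; the piecewise--constant assumption on the data and the boundedness $\vertiii{w_h^\ast}\le C\|w^\ast\|_{1,\Omega}$ keep every estimate quantitative and independent of small edges thanks to \eqref{eqrefgt1}--\eqref{eqrefgt3}.

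Combining these bounds yields
\[
\mathcal{B}_h(v_h,w_h^\ast)\;\ge\;\beta\|v_h\|_{1,\Omega}^2-Ch\,\|v_h\|_{1,\Omega}\|w^\ast\|_{1,\Omega}.
\]
Since $\|w_h^\ast\|_{1,\Omega}\le C\|w^\ast\|_{1,\Omega}=C\|v_h\|_{1,\Omega}$ (from \eqref{eq:auxiliarII}), choosing $h_0$ so that $Ch_0$ is sufficiently small produces
\[
\mathcal{B}_h(v_h,w_h^\ast)\;\ge\;\widehat{\beta}\,\|v_h\|_{1,\Omega}\,\|w_h^\ast\|_{1,\Omega}\qquad\forall h\le h_0,
\]
with $\widehat{\beta}>0$, which is \eqref{eq:inf-supBh}.

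The main obstacle is the careful consistency accounting for the convection term: unlike the diffusive part, where the lift $w_h^\ast$ was designed precisely to eliminate the $a_h$--$a$ mismatch, the form $b_h$ uses two different projectors ($\Pi^{\nabla,E}$ on the first argument and $\Pi^E$ on the second), so one must simultaneously pay for the approximation of $\nabla v_h$ and of $w_h^\ast$, and keep track of the fact that $w_h^\ast$ itself is only $H^1$--close to $w^\ast$ (not $H^2$). Here the $L^2$ improvement $O(h)\|w^\ast\|_{1,\Omega}$ in \eqref{eq:auxiliarII}, obtained via duality in Lemma \ref{lmm:technical1}, is precisely what provides the extra factor of $h$ that makes the perturbation term absorbable into $\beta$, independently of the presence of small edges.
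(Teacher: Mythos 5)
Your proposal follows essentially the same route as the paper: the continuous inf--sup supplies $w^\ast$, Lemma \ref{lmm:technical1} supplies the discrete lift $w_h^\ast$ that cancels the diffusion mismatch exactly, and the remaining convection and reaction residuals are split into projector-consistency terms plus $b(v_h,w^\ast-w_h^\ast)$ and $c(v_h,w^\ast-w_h^\ast)$, the latter controlled by the $O(h)$ duality estimate in \eqref{eq:auxiliarII}, after which the $O(h)$ perturbation is absorbed for $h<h_0$. The only place you are lighter than the paper is the first bracket of the convection residual, where one must further insert $\Pi^E$ (or subtract means) in the second slot to exploit the orthogonality of $\nabla v_h-\nabla\Pi^{\nabla,E}v_h$ to constants before the factor $h_E$ appears, exactly as in the paper's treatment of $T_4$.
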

\begin{proof}
To prove this result we resort to the classic construction of the Fortin operator. From the continuous inf-sup condition 
\eqref{eq:inf-supB}, for $v_h\in V_h$, there exists $w^*\in H_0^1(\Omega)$ such that 
\begin{equation}\label{eq:inf-supB2}
\frac{\mathcal{B}(v_h,w^*)}{\|w^*\|_{1,\O}}\geq M_2\|v_h\|_{1,\O}.
\end{equation}
From Lemma \ref{lmm:technical1} the following problem
$$\text{Find }w_h^*\in \Vh \text{ such that } a_h(w_h^*,v_h)=a(w^*,v_h),$$
has a unique solution which satisfies
\begin{equation}
\label{eq:aux_forinf-sup}\vertiii{w_h^*}\leq \|w^*\|_{1,\O},\qquad \|w_h^*-w^*\|_{0,\O}\leq h\|w^*\|_{1,\O}.
\end{equation}

On the other hand,  elementary algebraic manipulations, together with the use of the properties of the virtual projector,
lead to
\begin{align*}
\mathcal{B}_h(v_h,w_h^*)&=a_h(v_h,w_h^*)+b_h(v_h,w_h^*)+c_h(v_h,w_h^*)\\
&=a(v_h,w^*)+b_h(v_h,w_h^*)-b(v_h,w^*)+c_h(v_h,w_h^*)-c(v_h,w^*)\\
&+b(v_h,w^*)+c(v_h,w^*)\\
&=\mathcal{B}(v_h,w^*)+b_h(v_h,w_h^*)-b(v_h,w^*)+c_h(v_h,w_h^*)-c(v_h,w^*)\\
&=\mathcal{B}(v_h,w^*)+b_h(v_h,w_h^*)-b(v_h,w_h^*)+c_h(v_h,w_h^*)-c(v_h,w_h^*)\\
&+b(v_h,w_h^*-w^*)+c(v_h,w_h^*-w^*).
\end{align*}
Reorganizing the computations above, in order to simplify the presentation of the matetrial, we have
\begin{align}\nonumber
\mathcal{B}_h(v_h,w_h^*)+\underbrace{b(v_h,w^*-w_h^*)}_{T_1}&+\underbrace{c(v_h,w^*-w_h^*)}_{T_2}\\\label{eq:estimacioninf-sup}
+\underbrace{c(v_h,w_h^*)-c_h(v_h,w_h^*)}_{T_3}&+\underbrace{b(v_h,w_h^*)-b_h(v_h,w_h^*)}_{T_4}=\mathcal{B}(v_h,w^*).
\end{align}

 Now our task is to estimate each of the terms  $T_i$, $i\in\{1, 2, 3, 4\}$, independently of the meshsize $h$. We begin with $T_1$. Observe that  using that fact that  $b(\cdot,\cdot)$ is bounded, together with  \eqref{eq:aux_forinf-sup}, we obtain
 \begin{equation}\label{eq:primeracota}
T_1\leq C_{\mathbf{\vartheta}}h\|v_h\|_{1,\O}\|w^*\|_{1,\O}.
\end{equation}
For the term $T_2$, a simple application of the  Cauchy-Schwarz inequality, together with  \eqref{eq:aux_forinf-sup}, leads to 
\begin{equation*}\label{eq:segundacota}
T_2\leq C_{\gamma}h\|v_h\|_{1,\O}\|w^*\|_{1,\O}.
\end{equation*}
For  $T_3$, using that  $\gamma(\mathbf{x}) $ is piecewise constant with respect to the meshes, together with the properties of projector $\Pi^{E}$ and \eqref{eq:aux_forinf-sup}, we have
\begin{multline*}
T_3=\sum_{E\in\CT_h}c^E(v_h,w_h^*)-c_h^E(v_h,w_h^*)
=\sum_{E\in\CT_h}c^E(v_h,w_h^*-\Pi^{E}w_h^*)-c_h^E(v_h,w_h^*-\Pi^{E}w_h^*)\\\nonumber
\leq\sum_{E\in\CT_h}c_{\gamma}\|v_h\|_{0,E}\|w_h^*-\Pi^{E}w_h^*\|_{0,E}\leq\sum_{E\in\CT_h}c_{\gamma}h_E\|v_h\|_{0,E}|w_h^*|_{1,E}\\\label{eq:terceracota}
\leq \sum_{E\in\CT_h}c_{\gamma,\kappa}h_EC_4\|v_h\|_{0,E}\vertiii{w_h^*}_{E}\leq C_{\gamma,\kappa}h\|v_h\|_{1,\O}\vertiii{w_h^*}\leq C_{\gamma,\kappa}h\|v_h\|_{1,\O}\|w^*\|_{1,\O},
\end{multline*} 
where we have used   that $\kappa(\mathbf{x}) $ is piecewise constant with respect to the meshes, together with \eqref{eqrefgt2} and \eqref{eq:aux_forinf-sup}.
For the term $T_4$, first, it is necessary to note the following relation
\begin{align*}
b^E(v_h,w_h^*)&-b_h^E(v_h,w_h^*)=\left(\mathbf{\vartheta}(\mathbf{x})\cdot\nabla v_h,w_h^*\right)_{0,E}-\left(\mathbf{\vartheta}(\mathbf{x})\cdot\nabla \PiK v_h,\Pi^{E}w_h^*\right)_{0,E}\\
&=\left(\mathbf{\vartheta}(\mathbf{x})\cdot\nabla v_h,w_h^*-\Pi^Ew_h^*\right)_{0,E}+\left(\mathbf{\vartheta}(\mathbf{x})\cdot(\nabla v_h-\nabla \PiK v_h),\Pi^{E}w_h^*\right)_{0,E}\\
&=\left(\mathbf{\vartheta}(\mathbf{x})\cdot\nabla v_h-\Pi^E(\mathbf{\vartheta}(\mathbf{x})\cdot\nabla v_h),w_h^*-\Pi^Ew_h^*\right)_{0,E}\\
&+\left(\nabla v_h -\nabla \PiK v_h,\mathbf{\vartheta}(\mathbf{x})\Pi^{E}w_h^*-\mathbf{\vartheta}(\mathbf{x})w_h^*\right)_{0,E}\\
&+\left(\nabla v_h -\nabla \PiK v_h,\mathbf{\vartheta}(\mathbf{x})w_h^*-\Pi^E(\mathbf{\vartheta}(\mathbf{x})w_h^*)\right)_{0,E},
\end{align*}
where we have used the properties of the virtual projector. Therefore, we have the following estimate
\begin{multline*}
b^E(v_h,w_h^*)-b_h^E(v_h,w_h^*)\leq\|\mathbf{\vartheta}(\mathbf{x})\cdot\nabla v_h-\Pi_1^E(\mathbf{\vartheta}(\mathbf{x})\cdot\nabla v_h)\|_{0,E}\|w_h^*-\Pi^Ew_h^*\|_{0,E}\\
+\|\nabla v_h -\nabla \PiK v_h\|_{0,E}\|\mathbf{\vartheta}(\mathbf{x})\Pi^{E}w_h^*-\mathbf{\vartheta}(\mathbf{x})w_h^*\|_{0,E}\\
+\|\nabla v_h -\nabla \PiK v_h\|_{0,E}\|\mathbf{\vartheta}(\mathbf{x})w_h^*-\Pi^E(\mathbf{\vartheta}(\mathbf{x})w_h^*)\|_{0,E}\\
\leq h_EC_{\mathbf{\vartheta}}\|v_h\|_{1,E}|w_h^*|_{1,E}\leq h_EC_{\mathbf{\vartheta}}\|v_h\|_{1,E}\vertiii{w_h^*}_E.
\end{multline*} 
Thus, summing over all the elements we have that 
\begin{align*}
T_4 \leq hC_{\mathbf{\vartheta}}\|v_h\|_{1,\O}\vertiii{w_h^*}\leq C_{\kappa,\mathbf{\vartheta}}h\|v_h\|_{1,\O}\|w^*\|_{1,\O}.
\end{align*} 
Therefore, from \eqref{eq:primeracota}, together with \eqref{eq:inf-supB2} and  \eqref{eq:estimacioninf-sup} we get that 
\begin{equation*}
 C_{\gamma,\kappa,\mathbf{\vartheta}}h\|v_h\|_{1,\O}\|w^*\|_{1,\O}+\mathcal{B}_h(v_h,w_h^*)\geq \mathcal{B}(v_h,w^*)\geq M_2\|v_h\|_{1,\O}\|w^*\|_{1,\O}. 
\end{equation*}
Then we have 
\begin{equation}
\label{eq:inf-sup_cf}
\mathcal{B}_h(v_h,w_h^*)\geq (M_2-C_{\gamma,\kappa,\mathbf{\vartheta}}h)\|v_h\|_{1,\O}\|w^*\|_{1,\O}. 
\end{equation}
On the other hand, using \eqref{eq:aux_forinf-sup}, \eqref{eqrefgt2}, together to \eqref{eq.elliptich}, we obtain
\begin{align}
\label{eq:inf-sup_cf2}
\|w^*\|_{1,\O}^2\geq\vertiii{w_h^*}^2=\sum_{E\in\CT_h}\vertiii{w_h^*}_{E}^2\geq\sum_{E\in\CT_h}\dfrac{1}{C_4}a^E(w_h^*,w_h^*)\geq \beta\|w_h^*\|_{1,\O}^2.
\end{align}

Hence, using \eqref{eq:inf-sup_cf} and \eqref{eq:inf-sup_cf2} it is straightforward to check that there exists $h_0 > 0$ such that for all $h < h_0$, the inf-sup condition \eqref{eq:inf-supBh} holds true. 
\end{proof}

With these results at hand, we are in position to conclude the following result that establishes the well posedness of \eqref{eq:virtual}
\begin{theorem}[Well posedeness]
For a given $f\in L^2(\Omega)$ and for $h$ sufficiently small, there exists  a unique $u_h\in V_h$ solution of problem \eqref{eq:virtual}. Moreover, there exists a positive constant $C>0$, independent of $h$,  such that $\|u_h\|_{1,\Omega}\leq C\|f\|_{0,\O}$.
\end{theorem}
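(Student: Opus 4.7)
The plan is to derive the conclusion from the Banach--Ne\v{c}as--Babu\v{s}ka (BNB) theory applied to the discrete problem, using the three ingredients already established: the continuity \eqref{eq:cont_disc}, the discrete inf-sup condition of Lemma \ref{lmm:disc_inf_sup}, and the boundedness of the loading functional.

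First I would fix $h<h_0$ so that the inf-sup condition \eqref{eq:inf-supBh} is available with constant $\widehat{\beta}$ independent of $h$. Since $V_h$ is a finite-dimensional subspace of $H_0^1(\Omega)$, the bilinear form $\mathcal{B}_h:V_h\times V_h\to \mathbb{R}$ induces a square linear operator, and the inf-sup condition
$$
\sup_{w_h\in V_h}\frac{\mathcal{B}_h(v_h,w_h)}{\|w_h\|_{1,\Omega}}\ge \widehat{\beta}\|v_h\|_{1,\Omega}\qquad\forall v_h\in V_h
$$
immediately yields injectivity of this operator. In the finite-dimensional setting injectivity is equivalent to surjectivity, so existence and uniqueness of the solution $u_h\in V_h$ to \eqref{eq:virtual} follow at once (there is no need to prove a separate ``transposed'' inf-sup condition).

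Next I would derive the stability estimate by applying the discrete inf-sup condition to $u_h$ itself and substituting the discrete equation \eqref{eq:virtual}:
$$
\widehat{\beta}\|u_h\|_{1,\Omega}\le \sup_{w_h\in V_h}\frac{\mathcal{B}_h(u_h,w_h)}{\|w_h\|_{1,\Omega}} = \sup_{w_h\in V_h}\frac{\mathcal{F}(w_h)}{\|w_h\|_{1,\Omega}}.
$$
It then remains to bound $|\mathcal{F}(w_h)|=|(f_h,w_h)_{0,\Omega}|$. Using $f_h|_E=\Pi^E f$, the $L^2$-stability of the projector $\Pi^E$ (namely $\|\Pi^E f\|_{0,E}\le \|f\|_{0,E}$), Cauchy--Schwarz, and the Poincar\'e inequality on $H_0^1(\Omega)$, one obtains
$$
|\mathcal{F}(w_h)|\le \|f_h\|_{0,\Omega}\|w_h\|_{0,\Omega}\le \|f\|_{0,\Omega}\|w_h\|_{1,\Omega}.
$$
Combining these two bounds gives $\|u_h\|_{1,\Omega}\le \widehat{\beta}^{-1}\|f\|_{0,\Omega}$, which is the required estimate with $C=\widehat{\beta}^{-1}$ independent of $h$.

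No step here is truly an obstacle, since all the heavy lifting has already been done in Lemma \ref{lmm:disc_inf_sup}; the only delicate point to state explicitly is the equivalence of injectivity and surjectivity used to pass from the one-sided inf-sup condition to existence, which in the infinite-dimensional case would require an additional argument but is automatic in finite dimensions.
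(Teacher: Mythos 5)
Your proposal is correct and follows exactly the route the paper intends: the theorem is stated immediately after Lemma \ref{lmm:disc_inf_sup} with no written proof, precisely because, as you argue, the discrete inf-sup condition together with continuity \eqref{eq:cont_disc} and the boundedness of $\mathcal{F}$ yields existence, uniqueness, and the stability bound via the standard Banach--Ne\v{c}as--Babu\v{s}ka argument in finite dimensions. Your explicit derivation of the constant $C=\widehat{\beta}^{-1}$ and the remark on injectivity implying surjectivity are fine and fill in what the paper leaves implicit.
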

\section{A priori error estimates}

In this section we derive a priori error estimates  for our virtual element method. To do this task, first we remark that we are considering \eqref{eq:derivative_stab} as stabilization of the method under the small edges approach. This motivates to adapt the results of \cite[Section 4.2]{BLR2017} for our problem, where the error estimates (more precisely the constants on each estimate) depend on the geometrical assumptions. 


Now our goal is to derive error estimates for  our proposed virtual element method. We prove two error estimates: in one hand, we prove an $H^1$ error for the solution, and in the other an $L^2$ estimate. We begin with the $H^1$ estimate.
\begin{theorem}\label{teorem:error_1}
Let $u\in H^2(\O)\cap H^1(\O)$ and $u_h\in \Vh$ be the unique solutions of the continuous and discrete problems \eqref{eq:weak_state_equation} and \eqref{eq:virtual}, respectively. Then, for $h$ sufficiently small, there exists $C_{\gamma,\kappa,\mathbf{\vartheta}} > 0$, independent of h, such that
\begin{equation*}
\|u-u_h\|_{1,\O}\leq C_{\gamma,\kappa,\mathbf{\vartheta}} h\|f\|_{1,\O}.
\end{equation*}
\end{theorem}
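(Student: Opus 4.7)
\medskip

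\noindent\textbf{Proof plan.} I would proceed by a Strang-type argument combining the standard triangle-inequality split with the discrete inf-sup condition of Lemma~\ref{lmm:disc_inf_sup}. Let $u_I\in V_h$ be the interpolant from Lemma~\ref{estima2} and let $u_\pi$ denote, elementwise, the polynomial approximation from Lemma~\ref{lmm:bh}. Write
\begin{equation*}
\|u-u_h\|_{1,\Omega}\le \|u-u_I\|_{1,\Omega}+\|u_I-u_h\|_{1,\Omega}.
\end{equation*}
The first summand is controlled directly by Lemma~\ref{estima2} with $s=1$, which gives $\|u-u_I\|_{1,\Omega}\le Ch\|u\|_{2,\Omega}$; combined with the elliptic regularity estimate \eqref{eq:regularity} this contributes at most $Ch\|f\|_{0,\Omega}\le Ch\|f\|_{1,\Omega}$.

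For the second summand, I would apply the discrete inf-sup condition \eqref{eq:inf-supBh}: there exists $w_h\in V_h$ with $\|w_h\|_{1,\Omega}=1$ such that
$\widehat\beta\|u_I-u_h\|_{1,\Omega}\le \mathcal{B}_h(u_I-u_h,w_h)$. Using the discrete equation \eqref{eq:virtual} and the continuous equation \eqref{eq:weak_state_equation} evaluated at $w_h$, I would rewrite
\begin{equation*}
\mathcal{B}_h(u_I-u_h,w_h)=\bigl[\mathcal{B}_h(u_I,w_h)-\mathcal{B}(u,w_h)\bigr]+(f-f_h,w_h)_{0,\Omega}.
\end{equation*}
The data term is classical: since $f_h=\Pi^E f$ is the elementwise $L^2$-projection onto $\mathbb{P}_1$, standard estimates give $\|f-f_h\|_{0,\Omega}\le Ch\|f\|_{1,\Omega}$, and hence this contribution is $O(h\|f\|_{1,\Omega})$.

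The main work is bounding the consistency-type term elementwise as
\begin{equation*}
\mathcal{B}_h^E(u_I,w_h)-\mathcal{B}^E(u,w_h)=\mathcal{B}_h^E(u_I-u_\pi,w_h)+\bigl[\mathcal{B}_h^E(u_\pi,w_h)-\mathcal{B}^E(u_\pi,w_h)\bigr]+\mathcal{B}^E(u_\pi-u,w_h).
\end{equation*}
The outer two pieces are handled using the continuity of $\mathcal{B}_h$ in the $\vertiii{\cdot}$ semi-norm (cf.~\eqref{eq:cont_disc}) together with the boundedness of $\mathcal{B}$, combined with the approximation estimates of Lemmas~\ref{lmm:bh}--\ref{estima2} applied to $|u-u_\pi|_{1,E}$ and $|u-u_I|_{1,E}$ (and to their $L^2$ counterparts), which are all $O(h_E\|u\|_{2,E})$. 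For the middle bracket I would exploit polynomial consistency: since $u_\pi\in\mathbb{P}_1(E)$ and $\kappa,\gamma$ are piecewise constant, $a_h^E(u_\pi,w_h)=a^E(u_\pi,w_h)$, whereas for $b_h^E$ and $c_h^E$ the residual reduces to inner products of the form $(\vartheta\cdot\nabla u_\pi,\,\Pi^E w_h-w_h)_{0,E}$ and $(\gamma u_\pi,\,\Pi^E w_h-w_h)_{0,E}$, which are bounded by $Ch_E\|u\|_{2,E}|w_h|_{1,E}$ via the $L^2$-projection error $\|w_h-\Pi^E w_h\|_{0,E}\le Ch_E|w_h|_{1,E}$ and Cauchy--Schwarz; these are the same manipulations already carried out for the terms $T_3,T_4$ in the proof of Lemma~\ref{lmm:disc_inf_sup}.

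Summing over $E\in\mathcal{T}_h$, using $\|w_h\|_{1,\Omega}=1$, and invoking once more the regularity bound $\|u\|_{2,\Omega}\le C^{**}\|f\|_{0,\Omega}$, all consistency contributions are controlled by $C_{\gamma,\kappa,\vartheta}\,h\,\|f\|_{0,\Omega}$, while the data oscillation gives $Ch\|f\|_{1,\Omega}$. Combining with the interpolation estimate for $\|u-u_I\|_{1,\Omega}$ yields the claimed bound. The principal technical obstacle is the middle bracket above: one has to track carefully how the small-edges stabilization \eqref{eq:derivative_stab} enters through \eqref{eqrefgt1}--\eqref{eqrefgt3}, so that the constant $C_{\gamma,\kappa,\vartheta}$ really is independent of $h$ and of the edge-length ratios on the polygon boundaries, which is precisely the point of using the derivative stabilization under assumption \textbf{A1} alone.
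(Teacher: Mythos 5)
Your proposal is correct and follows essentially the same route as the paper: triangle inequality with the interpolant $u_I$, the discrete inf-sup condition of Lemma~\ref{lmm:disc_inf_sup} applied to $u_h-u_I$, and a Strang-type splitting into a data-oscillation term, approximation terms bounded via Lemmas~\ref{lmm:bh}--\ref{estima2} and the $\vertiii{\cdot}$-continuity, and a polynomial-consistency residual handled exactly as the terms $T_3,T_4$ in Lemma~\ref{lmm:disc_inf_sup}. The only (immaterial) difference is that you pivot through the local polynomial $u_\pi$ where the paper uses $\PiN u$; both play the same role and yield the same $O(h)$ bounds.
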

\begin{proof}
From triangle inequality we have
\begin{equation}\label{eq:triangleeq}
\|u-u_h\|_{1,\O}\leq \|u-u_I\|_{1,\O}+\|u_h-u_I\|_{1,\O}.
\end{equation}
where we need to estimate  each of the terms on the right hand side. We observe that the first term  is directly controlled thanks to  Lemma \ref{estima2}. The main task is to bound the second term. With this in mind, we set $v_h:=u_h-u_I$ in  \eqref{eq:inf-supBh}. Then 
\begin{align}\nonumber
\widehat{\beta}\|u_h-u_I\|_{1,\O}\|w_h^*\|_{1,\O}&\leq \mathcal{B}_h(u_h,w_h^*)-\mathcal{B}_h(u_I,w_h^*)\\\nonumber
&=(f_h-f,w_h^*)_{0,\O}+\mathcal{B}_h(\PiN u-u_I,w_h^*)+\mathcal{B}(u-\PiN u,w_h^*)\\\label{eq:or_1}
&+\mathcal{B}(\PiN u,w_h^*)-\mathcal{B}_h(\PiN u,w_h^*),
\end{align}
where we have used, \eqref{eq:weak_state_equation}, \eqref{eq:virtual},  and elementary algebraic manipulations. Then, for the Cauchy–Schwarz inequality, we have
\begin{equation}\label{eq:or_11}\left(f_h-f,w_h^*\right)_{0,\O}\leq \|f_h-f\|_{0,\O}\|w_h^*\|_{0,\O}\leq Ch\|f\|_{1,\O}\|w_h^*\|_{1,\O}.\end{equation}
On the other hand, from \eqref{eq:acotamientoB} and \eqref{eq:cont_disc},  we obtain
\begin{multline*}\mathcal{B}_h(\PiN u-u_I,w_h^*)+\mathcal{B}(u-\PiN u,w_h^*)\\
\leq C_{\gamma,\kappa,\mathbf{\vartheta}}\left( \vertiii{\PiN u-u_I}\vertiii{w_h^*}+\|u-\PiN u\|_{1,\O}\|w_h^*\|_{1,\O}\right)\\
\leq C_{\gamma,\kappa,\mathbf{\vartheta}}\big( \left(\vertiii{\PiN u-u}+\vertiii{u-u_I}\right)\vertiii{w_h^*}+\|u-\PiN u\|_{1,\O}\vertiii{w_h^*}\big).
\end{multline*}
The following step is to bound each of the terms on the right-hand side of the previous estimate. To do this task, using the definition of $\vertiii{\cdot}$ (cf. \eqref{eq:triple}) and, again operating as in the proof of \cite[Theorem 4.5]{BLR2017}, we get
\begin{align*}
&\vertiii{u-u_I}^2=\sum_{E\in\CT_h}\vertiii{u-u_I}_E=\sum_{E\in\CT_h}\left\{a^E(\PiK(u-u_I),\PiK(u-u_I))\right.\\
&\left.+S^E(u-u_I-\overline{(u-u_I)},u-u_I-\overline{(u-u_I)})\right\}\\
&\leq C_{\kappa}\sum_{E\in\CT_h}\left\{|\PiK (u-u_I)|_{1,E}^2+S^E(u-u_I,u-u_I)\right\}\\
&\leq C_{\kappa}\sum_{E\in\CT_h}\left\{|u-u_I|_{1,E}^2+h_E|u-u_I|_{1,\partial E}^2\right\}\leq C_{\kappa}\sum_{E\in\CT_h}h_E^2|u|_{2,E}^2\leq C_{\kappa}h^2|u|_{2,\O}^2,
\end{align*}
where we have used a scaled trace inequality (see \cite[Lemma 6.1]{BLR2017}). Similarly, using also standard approximation results on polygons, we obtain  
\begin{align*}
\vertiii{\PiN u-u}\leq C_{\kappa}h|u|_{2,\O}.
\end{align*}
On the other hand, invoking Lemma \ref{lmm:bh}, we have 
$
\|u-\PiN u\|_{1,\O}\leq C\|u\|_{2,\O}
$.
Thus, from the above estimates we conclude that 
\begin{equation*}
\label{eq:or2}
\mathcal{B}_h(\PiN u-u_I,w_h^*)+\mathcal{B}(u-\PiN u,w_h^*)
\leq C_{\gamma,\kappa,\mathbf{\vartheta}}h\|u\|_{2,\O}\vertiii{w_h^*}.
\end{equation*} 
We now need a control for the term $\vertiii{w_h^*}$. To do this task, we  invoke Lemma \ref{lmm:technical1} with $\|w^*\|_{1,\O}=1$. Moreover,  there exists a constant $\widetilde{C}>0$ such that:
\begin{multline*}
\vertiii{w_h^*}^2\leq \sum_{E\in\CT_h}a_h^E(w_h^{*},w_h^{*})=a_h(w_h^*,w_h^*)=a(w^*,w_h^*)\leq C_\kappa\|w^*\|_{1,\O}\|w_h^*\|_{1,\O}\\\
\leq C_\kappa(1+\|w_h^*\|_{1,\O}^2)\leq (\widetilde{C}+1)\|w_h^*\|_{1,\O}^2.
\end{multline*} 
Then we have
\begin{equation}
\label{eq:or22}
\mathcal{B}_h(\PiN u-u_I,w_h^*)+\mathcal{B}(u-\PiN u,w_h^*)
\leq C_{\gamma,\kappa,\mathbf{\vartheta}}h\|u\|_{2,\O}\|w_h^*\|_{1,\O}.
\end{equation} 

Finally, we need to control the last term of \eqref{eq:or_1}. With this purpose, using that $\kappa(\mathbf{x})$ and $\gamma(\mathbf{x})$ are piecewise constant with respect to the meshes, together with the definitions of the bilinear forms $a_h^E(\cdot,\cdot)$, $b_h^E(\cdot,\cdot)$, $c_h^E(\cdot,\cdot)$, the properties of the virtual projectors and Lemma \ref{lmm:bh}, we have 
\begin{multline}
\label{eq:or_33}
\mathcal{B}(\PiN u,w_h^*)-\mathcal{B}_h(\PiN u,w_h^*)=b(\PiN u,w_h^*-\PiN w_h^*)\\
\leq C_{\mathbf{\vartheta}}|\PiK u|_{1,\O}\|w_h^*-\PiN w_h^*\|_{0,\O}\\
\leq C_{\mathbf{\vartheta}}|u|_{1,\O}\|w_h^*-\Pi w_h^*\|_{0,\O}\leq hC_{\mathbf{\vartheta}}\|u\|_{1,\O}\|w_h^*\|_{1,\O}.
\end{multline}
Then, gathering \eqref{eq:or_11}, \eqref{eq:or22}, \eqref{eq:or_33} and  replacing these estimates in \eqref{eq:or_1}, together with the approximation property given by Lemma \ref{estima2} in \eqref{eq:triangleeq}, we conclude the proof.
\end{proof}

\begin{theorem}\label{teorem:error_|||}
Let $u\in H^2(\O)\cap H^1(\O)$ and $u_h\in \Vh$ be the unique solutions of the continuous and discrete formulations \eqref{eq:weak_state_equation} and \eqref{eq:virtual}, respectively. Then, for $h$ sufficiently small, there exists $C_{\gamma,\kappa,\mathbf{\vartheta}} > 0$, independent of h, such that
\begin{equation*}
\vertiii{u-u_h}\leq C_{\gamma,\kappa,\mathbf{\vartheta}} h\|f\|_{1,\O}.
\end{equation*}
\end{theorem}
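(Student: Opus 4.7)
The plan is to proceed exactly as in the proof of Theorem \ref{teorem:error_1}, but to measure the discrete part of the error in the triple norm itself, leveraging the equivalence \eqref{eqrefgt1} between $a_h^{E}(\cdot,\cdot)$ and $\vertiii{\cdot}_E^2$. The starting point is the triangle inequality
$$
\vertiii{u-u_h}\le \vertiii{u-u_I}+\vertiii{u_I-u_h}.
$$
For the first term, the computation already carried out in the proof of Theorem~\ref{teorem:error_1}, based on a scaled trace inequality and Lemma~\ref{estima2}, yields $\vertiii{u-u_I}\le C_\kappa h|u|_{2,\Omega}$, which together with the regularity bound \eqref{eq:regularity} is absorbed into $C_{\gamma,\kappa,\vartheta}h\|f\|_{1,\Omega}$.

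The substantive step is to bound $\vertiii{u_I-u_h}$. Setting $e_h:=u_I-u_h\in V_h$ and applying \eqref{eqrefgt1} elementwise, one obtains $\vertiii{e_h}^{2}\le C_1^{-1}a_h(e_h,e_h)$. I would then decompose $a_h(e_h,e_h)=a_h(u_I,e_h)-a_h(u_h,e_h)$ and insert an elementwise polynomial approximant $u_\pi$ to $u$ on each $E$, using that $a_h^E$ is polynomial-consistent, to write
$$
a_h^{E}(u_I,e_h)=a_h^{E}(u_I-u_\pi,e_h)+a^{E}(u_\pi-u,e_h)+a^{E}(u,e_h).
$$
Next I would invoke the continuous equation \eqref{eq:weak_state_equation} to replace $a(u,e_h)=(f,e_h)_{0,\Omega}-b(u,e_h)-c(u,e_h)$ and the discrete equation \eqref{eq:virtual} to replace $a_h(u_h,e_h)=(f_h,e_h)_{0,\Omega}-b_h(u_h,e_h)-c_h(u_h,e_h)$. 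Putting things together gives
\begin{multline*}
a_h(e_h,e_h)=\sum_{E\in\CT_h}\!\bigl[a_h^{E}(u_I-u_\pi,e_h)+a^{E}(u_\pi-u,e_h)\bigr]
+(f-f_h,e_h)_{0,\Omega}\\
+\bigl[b_h(u_h,e_h)-b(u,e_h)\bigr]+\bigl[c_h(u_h,e_h)-c(u,e_h)\bigr].
\end{multline*}

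Each term on the right is then controlled by $C_{\gamma,\kappa,\vartheta}h\|f\|_{1,\Omega}\vertiii{e_h}$. The diffusion consistency $a_h^E(u_I-u_\pi,e_h)+a^E(u_\pi-u,e_h)$ is handled via continuity \eqref{eqrefgt1}, the triangle inequality $\vertiii{u_I-u_\pi}_E\le \vertiii{u-u_I}_E+\vertiii{u-u_\pi}_E$, and the standard approximation estimates of Lemmas~\ref{lmm:bh}--\ref{estima2}. The data term $(f-f_h,e_h)_{0,\Omega}$ uses $\|f-f_h\|_{0,\Omega}\le Ch\|f\|_{1,\Omega}$ and the continuous embedding $\|e_h\|_{0,\Omega}\le C\vertiii{e_h}$ that follows from \eqref{eq.elliptich} combined with \eqref{eqrefgt1}. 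For the convective and reactive consistency gaps I would split
$$
b_h(u_h,e_h)-b(u,e_h)=\bigl[b_h(u_h,e_h)-b(u_h,e_h)\bigr]+b(u_h-u,e_h),
$$
and analogously for $c$, so the first summand is bounded by the projector estimates already used in the proof of Lemma~\ref{lmm:disc_inf_sup} (inserting $\Pi^E$ on both arguments), while the second is bounded by Theorem~\ref{teorem:error_1} which supplies $\|u-u_h\|_{1,\Omega}\le C_{\gamma,\kappa,\vartheta}h\|f\|_{1,\Omega}$. Dividing by $\vertiii{e_h}$ yields $\vertiii{e_h}\le C_{\gamma,\kappa,\vartheta}h\|f\|_{1,\Omega}$, and combining with the bound on $\vertiii{u-u_I}$ completes the proof.

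The main obstacle I expect is the careful bookkeeping of the convective consistency $b_h(u_h,e_h)-b(u_h,e_h)$: since $e_h$ is virtual rather than polynomial, the polynomial-consistency argument for $a_h^E$ does not apply verbatim, and one has to mimic the three-term manipulation used inside the proof of Lemma~\ref{lmm:disc_inf_sup}, where $\Pi^E$ is inserted on both $w_h^*$ and on $\vartheta\cdot\nabla v_h$, and then Lemma~\ref{lmm:bh} is invoked to produce the extra factor of $h$. Once this is in place, all the remaining estimates are routine consequences of the ingredients already available in the paper.
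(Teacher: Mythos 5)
Your proposal is correct and follows essentially the same route as the paper's proof: the same triangle inequality, the same coercivity bound $\vertiii{e_h}^2\le C_1^{-1}a_h(e_h,e_h)$ via \eqref{eqrefgt1}, the same insertion of $u_\pi$ together with the continuous and discrete equations, and the same reuse of the estimates from Lemma \ref{lmm:disc_inf_sup} and Theorem \ref{teorem:error_1} for the convective, reactive, and data terms. The only (harmless) difference is that you make explicit the bound $\|e_h\|_{1,\Omega}\le C\vertiii{e_h}$, which the paper uses implicitly when combining its four estimates.
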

\noindent
\begin{proof}
From triangle inequality we have $\vertiii{u-u_h}\leq \vertiii{u-u_I}+\vertiii{u_h-u_I}$.
Let $v_h:=u_h-u_I\in V_h$. From \eqref{eqrefgt1}, we obtain
\begin{multline*}
\vertiii{u_h-u_I}^2=\sum_{E\in\CT_h}\vertiii{u_h-u_I}_E^2\\
\leq \sum_{E\in\CT_h}\dfrac{1}{c_1}a_h^E(u_h-u_I,v_h) 
\leq C\left(\sum_{E\in\CT_h}\left(a_h^E(u_h-,v_h)-a_h^E(u_I,v_h)\right)\right)\\
\leq C\left((f_h,v_h)-\sum_{E\in\CT_h}\left(b_h^E(u_h,v_h)+c_h^E(u_h,v_h)\right)\right.\\
\left.-\sum_{E\in\CT_h}\left[a_h^E(u_I-u_{\pi},v_h)+a^E(u_{\pi}-u,v_h)+a^E(u,v_h)\right]\right)\\
\leq C\left((f_h,v_h)-(f,v_h)+\sum_{E\in\CT_h}\left(b^E(u,v_h)-b_h^E(u_h,v_h)+c^E(u,v_h)-c_h^E(u_h,v_h)\right)\right.\\
\left.-\sum_{E\in\CT_h}\left[a_h^E(u_I-u_{\pi},v_h)+a^E(u_{\pi}-u,v_h)\right]\right).
\end{multline*}

Now we need to estimate each term on the right-hand side of the above estimate. With this goal in mind, we proceed as in the proof of Lemma \ref{lmm:disc_inf_sup} and Theorem \ref{teorem:error_1}, so we obtain the following estimates
\begin{align*}
(f_h,v_h)_{0,\O}-(f,v_h)_{0,\O}&\leq Ch\|f\|_{1,\O}\|v_h\|_{1,\O},\\
\sum_{E\in\CT_h}(b^E(u,v_h)-b_h^E(u_h,v_h))&\leq C_{\kappa,\mathbf{\vartheta}}h\|u\|_{2,\O}\|v_h\|_{1,\O},\\
\sum_{E\in\CT_h}(c^E(u,v_h)-c_h^E(u_h,v_h)) &\leq C_{\gamma,\kappa}h\|u\|_{1,\O}\|w_h\|_{1,\O},\\
\sum_{E\in\CT_h}(a_h^E(u_I-u_{\pi},v_h)+a^E(u_{\pi}-u,v_h)) &\leq C_{\kappa}h\|u\|_{2,\O}\vertiii{v_h}.
\end{align*}
Combining all these estimates we obtain
\begin{equation*}
\vertiii{u_h-u_I}^2\leq C_{\gamma,\kappa,\mathbf{\vartheta}}h(\|f\|_{1,\O}+\|u\|_{2,\O})\vertiii{u_h-u_I},
\end{equation*}
which concludes the proof.
\end{proof}

Our next goal is to improve the error estimate for the $L^2$-norm. This is contained in the following result.
\begin{theorem}
For $h$ sufficiently small, the following error estimate holds:
\begin{equation*}\label{eq:triangle_eq1}
\|u-u_h\|_{0,\O}\leq C_{\kappa,\mathbf{\vartheta},\gamma} h^2\|f\|_{0,\O}.
\end{equation*}
\end{theorem}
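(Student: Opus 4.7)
The approach is the classical Aubin--Nitsche (duality) argument adapted to the non-symmetric VEM setting. Introduce the adjoint problem: find $\phi\in H_0^1(\O)$ such that $\mathcal{B}(v,\phi)=(u-u_h,v)_{0,\O}$ for every $v\in H_0^1(\O)$. Because the adjoint has the same structure as the primal and $\Omega$ is convex, the regularity estimate \eqref{eq:regularity} applies to the dual solution and yields $\|\phi\|_{2,\O}\le C\|u-u_h\|_{0,\O}$. Taking $v=u-u_h$ produces the starting identity $\|u-u_h\|_{0,\O}^{2}=\mathcal{B}(u-u_h,\phi)$.

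With $\phi_I\in V_h$ the interpolant supplied by Lemma \ref{estima2}, I exploit $\mathcal{B}(u,\phi_I)=(f,\phi_I)$ and $\mathcal{B}_h(u_h,\phi_I)=(f_h,\phi_I)$ to write
\begin{equation*}
\|u-u_h\|_{0,\O}^{2}=\mathcal{B}(u-u_h,\phi-\phi_I)+(f-f_h,\phi_I)+\big[\mathcal{B}_h(u_h,\phi_I)-\mathcal{B}(u_h,\phi_I)\big].
\end{equation*}
The goal is to bound each of the three pieces by $Ch^{2}\|f\|_{0,\O}\|\phi\|_{2,\O}$; combining with $\|\phi\|_{2,\O}\le C\|u-u_h\|_{0,\O}$ and dividing through by $\|u-u_h\|_{0,\O}$ will close the argument.

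For the first piece I combine continuity \eqref{eq:acotamientoB}, the $H^1$ estimate from Theorem \ref{teorem:error_1} (read through the regularity $\|u\|_{2,\O}\le C\|f\|_{0,\O}$ so that $\|u-u_h\|_{1,\O}\le Ch\|f\|_{0,\O}$), and the interpolation bound $\|\phi-\phi_I\|_{1,\O}\le Ch|\phi|_{2,\O}$ from Lemma \ref{estima2}. For the data-oscillation piece I split $(f-f_h,\phi_I)=(f-f_h,\phi)+(f-f_h,\phi_I-\phi)$ and use the $L^2$-orthogonality of $\Pi^E$ against $\mathbb{P}_1(E)$ elementwise, writing $(f-f_h,\phi)_E=(f-\Pi^E f,\phi-\Pi^E\phi)_E$ so that the quadratic polynomial-approximation rate for $\phi\in H^2$ supplies the missing power of $h$. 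For the consistency piece I mirror the estimates of $T_3$ and $T_4$ in the proof of Lemma \ref{lmm:disc_inf_sup} with $\phi$ in place of $w^{\ast}$: since $\kappa,\gamma$ are piecewise constant the local forms $a_h^E$ and $c_h^E$ are consistent against polynomials, so the non-vanishing contributions reduce to products of the form $\|\phi-\Pi^E\phi\|_{0,E}$ and $|\phi-\PiN\phi|_{1,E}$ (each $O(h|\phi|_{2,E})$) against $|u_h|_{1,E}$, the latter being controlled through $\vertiii{u_h}\le C\|f\|_{0,\O}$ thanks to \eqref{eqrefgt2} and the well-posedness of \eqref{eq:virtual}; this furnishes the second factor of $h$.

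The hard part will be the cross term $(f-f_h,\phi_I-\phi)$ in the data-oscillation bound, because Lemma \ref{estima2} only delivers $\|\phi-\phi_I\|_{0,\O}=O(h)$, so a direct Cauchy--Schwarz is insufficient. The remedy is to route through the polynomial projector once more, writing $(f-f_h,\phi_I-\phi)_E=(f-\Pi^E f,(\phi_I-\Pi^E\phi_I)-(\phi-\Pi^E\phi))_E$ and using the best-approximation property $\|\phi_I-\Pi^E\phi_I\|_{0,E}\le \|\phi_I-\Pi^E\phi\|_{0,E}\le \|\phi_I-\phi\|_{0,E}+\|\phi-\Pi^E\phi\|_{0,E}$ together with the scaled trace estimates invoked in the proof of Lemma \ref{lmm:technical1} to recover the missing power of $h$ in the small-edge regime. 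Structurally, the entire bookkeeping therefore parallels the estimates for $T_1,\ldots,T_4$ already carried out for Lemma \ref{lmm:disc_inf_sup}, with each term systematically gaining the extra factor of $h$ provided by the $H^2$-regularity of the dual solution.
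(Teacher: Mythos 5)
Your overall skeleton is the same as the paper's: an Aubin--Nitsche duality argument with the adjoint solution $\phi$, the $H^2$ regularity bound $\|\phi\|_{2,\O}\le C\|u-u_h\|_{0,\O}$, and the three-term decomposition into an interpolation piece $\mathcal{B}(u-u_h,\phi-\phi_I)$, a data-oscillation piece, and a consistency piece $\mathcal{B}_h(u_h,\phi_I)-\mathcal{B}(u_h,\phi_I)$. The first piece is handled exactly as in the paper. Your treatment of the data term is a more roundabout version of the paper's one-line identity $(f-f_h,\phi_I)_E=(f-\Pi^E f,\phi_I-\Pio\phi_I)_E$, but it is valid; note that both routes secretly require $f\in H^1(\O)$ to extract the factor $\|f-\Pi^E f\|_{0,E}\le Ch_E|f|_{1,E}$ (the paper's own final bound indeed carries $\|f\|_{1,\O}$ rather than the $\|f\|_{0,\O}$ announced in the statement), so your stated goal of bounding everything by $Ch^2\|f\|_{0,\O}\|\phi\|_{2,\O}$ is not literally attainable for that term.

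The genuine gap is in the consistency piece. You estimate it as a product of quantities of the form $\|\phi-\Pi^E\phi\|_{0,E}$ or $|\phi-\PiN\phi|_{1,E}$ (each $O(h_E|\phi|_{2,E})$) against $|u_h|_{1,E}$, and you control the latter only through the stability bound $\vertiii{u_h}\le C\|f\|_{0,\O}$, which is $O(1)$. That yields $\mathcal{B}_h(u_h,\phi_I)-\mathcal{B}(u_h,\phi_I)\le Ch\|f\|_{0,\O}\|\phi\|_{2,\O}$, one power of $h$ only; after using $\|\phi\|_{2,\O}\le C\|u-u_h\|_{0,\O}$ and dividing, the whole argument closes at $O(h)$, not $O(h^2)$. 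The missing idea is polynomial consistency in the \emph{first} argument as well: since $a_h^E(p,\cdot)=a^E(p,\cdot)$ for $p\in\P_1(E)$ (and similarly for $b_h^E$, $c_h^E$ up to projector orthogonality), one may replace $u_h$ by $u_h-\PiK u$ (or $u_h-u_\pi$) inside the consistency difference, so that the $u_h$-factor becomes $\vertiii{u_h-\PiK u}_E$, which is $O(h)$ by the already established estimates of Theorems \ref{teorem:error_1} and \ref{teorem:error_|||} together with Lemma \ref{lmm:bh}. This is precisely what the paper does, writing for instance $a_h^E(u_h,\psi_I)-a^E(u_h,\psi_I)\le C_\kappa\vertiii{u_h-\PiK u}_E\vertiii{\psi_I-\PiK\psi}_E$, so that \emph{both} factors contribute a power of $h$. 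Without this step the second order of convergence cannot be reached.
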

\begin{proof}
We procede with a standard duality argument. Let $\psi\in H^2(\O)\cap H_0^1(\O)$ be the solution of the following adjoint problem  
\begin{equation*}
\nabla \cdot (-\kappa(\mathbf{x}) + \mathbf{\vartheta}(\mathbf{x}) \psi) +\gamma(\mathbf{x}) \psi = u-u_h \quad \textrm{in}~\Omega, \qquad \psi = 0 \quad \textrm{on}~\partial\Omega.
\end{equation*}
On the other hand, let $\psi_I\in\Vh$ be its interpolant, satisfying the following error estimate
\begin{equation*}
\label{eq:aux_duality}
\|\psi-\psi_I\|_{1,\O}\leq Ch\|\psi\|_{2,\O}\leq Ch\|u-u_h\|_{0,\O}.
\end{equation*}
Now, proceeding as in the proof of \cite[Theorem 5.2]{MR3460621}, using elementary algebraic manipulations, together with the use of the properties of the virtual projector we have
\begin{multline}
\label{eq:L2_error}
\|u-u_h\|_{0,\O}^2=\mathcal{B}(u-u_h,\psi)=\mathcal{B}(u,\psi-\psi_I)+\mathcal{B}(u,\psi_I)-\mathcal{B}(u_h,\psi)\\
=(f-f_h,\psi_I-\Pi\psi_I)_{0,\O}+\mathcal{B}(u-u_h,\psi-\psi_I)+\mathcal{B}_h(u_h,\psi_I)-\mathcal{B}(u_h,\psi_I)
\end{multline}
where the last inequality of the previous estimation is obtained by proceeding in the same way as in the proof of Theorem \ref{teorem:error_1}. the following step is to bound all the terms on the right-hand side of \eqref{eq:L2_error}. We note that by the properties used above it can be shown that 
\begin{align*}
(f-f_h,\psi_I-\Pi\psi_I)_{0,\O}&\leq h^2|f|_{1,\O}\|\psi\|_{2,\O}\leq h^2|f|_{1,\O}\|u-u_h\|_{0,\O};\\
\mathcal{B}(u-u_h,\psi-\psi_I)&\leq C_{\gamma,\kappa,\mathbf{\vartheta}} h^2\|u\|_{2,\O}\|\psi\|_{2,\O}\leq C_{\gamma,\kappa,\mathbf{\vartheta}}h^2\|u\|_{2,\O}\|u-u_h\|_{0,\O}.
\end{align*}
For the last term we have that:
 \begin{align*}
\mathcal{B}_h(u_h,\psi_I)-\mathcal{B}(u_h,\psi_I)=\sum_{E\in\CT_h}\left(a_h^E(u_h,\psi_I)-a^E(u_h,\psi_I)+b_h^E(u_h,\psi_I)-b^E(u_h,\psi_I)\right.\\\left.+c_h^E(u_h,\psi_I)-c^E(u_h,\psi_I)\right).
\end{align*}
Proceeding according to the proofs of Lemmas \ref{lmm:technical1} and \ref{lmm:disc_inf_sup}, the following estimates can be obtained
\begin{align*}
\sum_{E\in\CT_h}(a_h^E(u_h,\psi_I)-a^E(u_h,\psi_I))&\leq \sum_{E\in\CT_h}C_\kappa(\vertiii{u_h-\PiK u}_E\vertiii{\psi_I-\PiK \psi}_E\\
&\leq C_\kappa h^2\|u\|_{2,\O}\|u-u_h\|_{0,\O}.\\
\sum_{E\in\CT_h}(b_h^E(u_h,\psi_I)-b^E(u_h,\psi_I)&\leq C_\vartheta h^2\|u\|_{2,\O}\|u-u_h\|_{0,\O}.\\
\sum_{E\in\CT_h}(c_h^E(u_h,\psi_I)-c^E(u_h,\psi_I))&\leq \sum_{E\in\CT_h}C_\gamma\|u_h-\Pi^Eu\|_{0,E}\|\psi_I-\Pi^E \psi\|_{0,E}\\
&\leq C_\gamma h^2\|u\|_{2,\O}\|u-u_h\|_{0,\O}.
\end{align*}
Thus, by combining all the estimates obtained, it is concluded that
\begin{align*}
\|u-u_h\|_{0,\O}^2\leq C_{\kappa,\vartheta,\gamma}h^2(\|f\|_{1,\O}+\|u\|_{2,\O})\|u-u_h\|_{0,\O},
\end{align*}
which makes it possible to conclude the proof.

\end{proof}

\section{The eigenvalue problem}
\label{sec:spectral}
As a consequence of the previous results, now we are interested in the natural extension of considering the associated
eigenvalue problem. It is important to take into account that the spectral problem is non symmetric and hence, the eigenvalues
associated to the solution operator are expectable to be complex (see \cite{MR3133493} for instance). Also, we claim that from now and on, the bilinear form $c(\cdot,\cdot)$ is no longer considered. This implies that our eigenvalue problem is set for the difussion-convection problem as the one analyzed in \cite{MR3133493}. 
\subsection{Spectral continuous problem}

The definition of the bilinear form $\mathcal{B}(\cdot,\cdot)$ given in  \eqref{eq:bilinear_cont}, must be modified for the eigenvalue problem  considering now  complex conjugated test functions. Moreover, since we will consider the difussion-convection spectral problem, the bilinear form $c(\cdot,\cdot)$ is no longer needed. Hence,  the eigenvalue problem reads as follows:  Find $\lambda\in\mathbb{C}$ and $0\neq u\in H_0^1(\O)$
such that  
\begin{equation}
\label{eq:spectral1}
\CB_{ab}(u,v)=\lambda  d(u,v)  \quad \forall v \in H_{0}^{1}(\Omega),
\end{equation}
where, $\CB_{ab}(\cdot,\cdot)$ is the bilinear form defined by 
\begin{equation*}
\CB_{ab}(w,v):=\int_{\Omega}\kappa(\mathbf{x}) \nabla w\cdot\nabla \bar{v} +\int_{\Omega}( \mathbf{\vartheta}(\mathbf{x})\cdot \nabla w)\bar{v}\quad\forall w,v\in H_{0}^{1}(\Omega),
\end{equation*}
whereas $d(\cdot,\cdot)$ is the bilinear form defined by 
$d(w,v):=(w, v)_{0,\O}.$
Clearly in this context, the space $H_0^1(\O)$ must be understood as a complex Hilbert space. Moreover,  we denote by $\bar{v}$  the conjugate of $v$.

Under the assumption that $ \mathbf{\vartheta}(\mathbf{x})$ is divergence free, it is easy to check that there exists $\underline{\beta}>0$ such that
\begin{equation*}
\label{eq:inf-supBab}
\displaystyle\sup_{v\in H_0^1(\Omega)}\frac{\mathcal{B}_{ab}(w,v)}{\|v\|_{1,\O}}\geq \underline{\beta}\|w\|_{1,\O}\quad \forall w\in H_0^1(\O).
\end{equation*}
This allows us to introduce the solution operator
$T$, defined as follows
$$T:H_0^1(\O)\rightarrow H_0^1(\O),\quad f\mapsto Tf=\widetilde{u},$$
where $\widetilde{u}\in H_0^1(\O)$ is the solution of the following source problem
\begin{equation}
\label{eq:source}
\CB_{ab}(\widetilde{u},v)=\mathcal{F}(v)\quad \forall v \in H_{0}^{1}(\Omega),
\end{equation}
where $\mathcal{F}$ is the functional defined on \eqref{eq:functionaF}. Let us remark that 
 $T$ is well defined and  compact   due to the compact inclusion of $H^2(\O)$ onto $H^1(\O)$. On the other hand, we observe that
$(\lambda,u)\in \mathbb{C}\times H_0^1(\O)$ is a solution of  \eqref{eq:spectral1} if and only if $(\mu,u)\in\mathbb{C}\times H_0^1(\O)$ is an eigenpair of $T$.

Since we have the additional regularity described in \eqref{eq:regularity}, the following spectral characterization of $T$ holds.
\begin{lemma}[Spectral Characterization of $T$]
The spectrum of $T$ is such that $\sp(T)=\{0\}\cup\{\mu_k\}_{k\in\mathbb{N}}$, where $\{\mu_k\}_{k\in\mathbb{N}}$ is a sequence of complex eigenvalues that converge to zero, according to their respective multiplicities.
\end{lemma}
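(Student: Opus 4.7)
The plan is to deduce the stated spectral decomposition as a direct consequence of the classical Riesz--Schauder theorem for compact operators on a Hilbert space, after first establishing that $T$ is well defined, bounded and compact on $H_0^1(\Omega)$ (viewed as a complex Hilbert space).

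First I would verify that $T$ is well defined. Given $f\in H_0^1(\Omega)\subset L^2(\Omega)$, the inf-sup condition stated just above the definition of $T$ together with the continuity of $\mathcal{B}_{ab}(\cdot,\cdot)$ (inherited from the bounds on the coefficients $\kappa$ and $\vartheta$) shows, via the Banach--Ne\v{c}as--Babu\v{s}ka theorem, that the source problem \eqref{eq:source} admits a unique solution $\widetilde u\in H_0^1(\Omega)$. Hence $T:H_0^1(\Omega)\to H_0^1(\Omega)$ is a well-defined bounded linear operator, with $\|Tf\|_{1,\Omega}\le C\|f\|_{-1,\Omega}\le C\|f\|_{1,\Omega}$ by the first bound in \eqref{eq:regularity}.

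The key step is compactness. Since $\Omega$ is convex with polygonal boundary, the second estimate in \eqref{eq:regularity} applies whenever $f\in L^2(\Omega)$, giving $\|Tf\|_{2,\Omega}\le C^{**}\|f\|_{0,\Omega}\le C^{**}\|f\|_{1,\Omega}$; that is, $T$ factors through the continuous inclusion $H^2(\Omega)\cap H_0^1(\Omega)\hookrightarrow H_0^1(\Omega)$. By the Rellich--Kondrachov theorem this inclusion is compact, so $T$ is a compact operator on $H_0^1(\Omega)$.

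With compactness in hand I would invoke the Riesz--Schauder spectral theorem for compact operators on an infinite-dimensional complex Hilbert space. This theorem yields that $\sp(T)$ is a countable set whose only possible accumulation point is $0$, that $0\in\sp(T)$ (since $T$ is compact and the ambient space is infinite dimensional), and that every nonzero spectral value is an eigenvalue with finite algebraic multiplicity. Enumerating the nonzero eigenvalues according to their algebraic multiplicities produces the sequence $\{\mu_k\}_{k\in\mathbb{N}}$ with $\mu_k\to 0$, which is precisely the claim. I do not anticipate any real obstacle; the only point deserving care is the verification of compactness, which rests on the $H^2$-regularity bound in \eqref{eq:regularity}, valid here thanks to the convexity of $\Omega$ and the smoothness assumptions on $\kappa$ and $\vartheta$.
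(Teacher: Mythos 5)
Your proposal is correct and follows essentially the same route as the paper: the paper likewise justifies the lemma by noting that $T$ is well defined (via the inf-sup condition for $\CB_{ab}$) and compact thanks to the additional regularity \eqref{eq:regularity} and the compact inclusion of $H^2(\O)$ into $H^1(\O)$, after which the stated characterization is the classical spectral theory of compact operators. Your write-up simply makes explicit the Riesz--Schauder argument that the paper leaves implicit.
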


\subsection{Spectral discrete problem}
Now we introduce the VEM discretization of problem \eqref{eq:spectral1}. To do this task, we requiere the global space $V_h$
defined in \eqref{eq:globa_space} together with the assumptions introduced in Section \ref{sec:virtual}.

The spectral problem reads as follows: find $\lambda_h\in\mathbb{C}$ and $0\neq u_h\in V_h$ such that
\begin{equation*}
\label{eq:spectral_disc}
\CB_{ab,h}(u_h,v_h)=\lambda_h d_h(u_h,v_h) \quad \forall v_h \in V_h,
\end{equation*}
where $\CB_h(\cdot,\cdot)$ is the bilinear form defined in \eqref{eq:bilineal_form_B_split_{h}} and $d_h(\cdot,\cdot)$ is the bilinear form defined by $d_h(w_h,v_h):=(\Pi w_h,\Pi v_h)_{0,\O}.$

Adapting the proof of Lemma \ref{lmm:disc_inf_sup} we prove the existence  of  a constant $\underline{\widehat{\beta}}>0$ such that, for all $h<h_0$ there holds
\begin{equation}\label{eq:inf-supBhab}
\displaystyle\sup_{w_h\in V_h}\frac{\mathcal{B}_{ab,h}(v_h,w_h)}{\|w_h\|_{1,\O}}\geq\underline{\widehat{\beta}}\|v_h\|_{1,\Omega}\quad\forall v_h\in V_h.
\end{equation}

This allows us to introduce the discrete counterpart of $T$, 
namely $T_h$, that is defined by 
$$T_h:H_0^1(\O)\rightarrow V_h,\quad f\mapsto T_hf=\widetilde{u}_h,$$
where $\widetilde{u}_h\in V_h$ is the solution of the following source problem
\begin{equation}
\label{eq:source_discrete}
\CB_{ab,h}(\widetilde{u}_h,v_h)=d_h(f,v_h)\quad \forall v_h \in V_h.
\end{equation}

The main goal is to analyze the convergence of the method and derive error estimates for the eigenvalues and eigenfunctions. Due to the compactness of $T$, the convergence of the eigenvalues is derived from the classic theory of \cite{BO}.
Since the eigenvalue problem is nonsymmetric, it is important to consider  the associated adjoint problem. To do this task, let us denote by $T^{*}$ and $T_h^{*}$
the adjoint operators of $T$ and $T_h$ respectively, both defined by $T^*f=\widetilde{u}^*$ and $T_h^*f=\widetilde{u}_h^*$, where $\widetilde{u}^*$ and $\widetilde{u}_h^*$ are the solution of the following problems
\begin{equation*}
\CB_{ab}(v,\widetilde{u}^*)=d(f,v)\quad\forall v\in H_0^1(\O)
\quad
\text{and}
\quad
\CB_{ab,h}(v,\widetilde{u}_h^*)=d_h(f,v_h)\quad\forall v\in  V_h.
\end{equation*}
With these operators at hand, our first task is to prove the convergence in norm of $T_h$ to $T$ and the adjoints counterparts
$T^*_h$ to $T^*$ as $h$ goes to zero. We begin our analysis with the following result.

\begin{lemma}
\label{lmm:conv1}
Let $f\in L^2(\Omega)$ be such that $\widetilde{u}:=Tf$ and $\widetilde{u}_h:=T_hf$. Then, there exists a positive constant $C_{\kappa,\mathbf{\vartheta}}$, independent of $h$, such that 
\begin{equation*}
\|(T-T_h)f\|_{1,\Omega}\leq C_{\kappa,\mathbf{\vartheta}}h\|f\|_{0,\O}.
\end{equation*}
\end{lemma}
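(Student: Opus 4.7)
The plan is to adapt the $H^1$-norm analysis of Theorem~\ref{teorem:error_1} to the diffusion--convection source problem that defines $T$ and $T_h$, the simplification being the absence of the reaction term and the only new difficulty being that the datum $f$ lies in $L^2(\Omega)$ rather than in $H^1(\Omega)$. I would first split the error through the interpolant provided by Lemma~\ref{estima2},
$$
\|\widetilde{u}-\widetilde{u}_h\|_{1,\Omega}\leq \|\widetilde{u}-\widetilde{u}_I\|_{1,\Omega}+\|\widetilde{u}_I-\widetilde{u}_h\|_{1,\Omega},
$$
bound the first summand directly by $Ch|\widetilde{u}|_{2,\Omega}\leq Ch\|f\|_{0,\Omega}$ by combining Lemma~\ref{estima2} with the shift estimate \eqref{eq:regularity} applied to the solution operator of \eqref{eq:source}, and reduce the analysis to the purely discrete quantity $\|\widetilde{u}_I-\widetilde{u}_h\|_{1,\Omega}$.

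To control this second term I would invoke the discrete inf--sup condition \eqref{eq:inf-supBhab} with $v_h:=\widetilde{u}_h-\widetilde{u}_I$ and then decompose, mimicking \eqref{eq:or_1},
\begin{align*}
\CB_{ab,h}(\widetilde{u}_h-\widetilde{u}_I,w_h)
&=d_h(f,w_h)-\CB_{ab,h}(\widetilde{u}_I,w_h)\\
&=\bigl[d_h(f,w_h)-d(f,w_h)\bigr]+\CB_{ab,h}(\PiN\widetilde{u}-\widetilde{u}_I,w_h)\\
&\quad +\CB_{ab}(\widetilde{u}-\PiN\widetilde{u},w_h)+\bigl[\CB_{ab}(\PiN\widetilde{u},w_h)-\CB_{ab,h}(\PiN\widetilde{u},w_h)\bigr].
\end{align*}
The three $\CB_{ab}$-related brackets are estimated exactly as in the proof of Theorem~\ref{teorem:error_1}: continuity bounds \eqref{eq:acotamientoB} and \eqref{eq:cont_disc}, the equivalence \eqref{eqrefgt2} between $\vertiii{\cdot}_E$ and $a^E(\cdot,\cdot)$, the scaled trace inequality yielding $\vertiii{\widetilde{u}-\widetilde{u}_I}\leq Ch|\widetilde{u}|_{2,\Omega}$, and Lemma~\ref{lmm:bh} for $\|\widetilde{u}-\PiN\widetilde{u}\|_{1,\Omega}$. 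The fact that $\kappa$ is piecewise constant makes the diffusion part of the last bracket cancel on polynomials, leaving only the convective contribution, which is handled exactly like the term $T_4$ in the proof of Lemma~\ref{lmm:disc_inf_sup} and contributes at most $C_{\kappa,\mathbf{\vartheta}}h\|\widetilde{u}\|_{2,\Omega}\|w_h\|_{1,\Omega}$.

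The genuinely new piece is the datum consistency term $d_h(f,w_h)-d(f,w_h)$. Since the estimate $\|f_h-f\|_{0,\Omega}\leq Ch\|f\|_{1,\Omega}$ used in \eqref{eq:or_11} is unavailable when $f\in L^2(\Omega)$ only, I would exploit the $L^2$-orthogonality of $\Pi^E$ onto $\mathbb{P}_1(E)$, so that elementwise $(\Pi^E f,\Pi^E w_h)_{0,E}-(f,w_h)_{0,E}=(f,\Pi^E w_h-w_h)_{0,E}$, and hence
$$
|d_h(f,w_h)-d(f,w_h)|\leq\sum_{E\in\CT_h}\|f\|_{0,E}\|\Pi^E w_h-w_h\|_{0,E}\leq Ch\|f\|_{0,\Omega}\|w_h\|_{1,\Omega}
$$
via the standard polynomial approximation estimate for $\Pi^E$ on star-shaped polygons. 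Dividing by $\|w_h\|_{1,\Omega}$, taking the supremum, and invoking $\|\widetilde{u}\|_{2,\Omega}\leq C\|f\|_{0,\Omega}$ from \eqref{eq:regularity} then yields the claim. I expect the main obstacle to be purely organizational: verifying that the inf--sup \eqref{eq:inf-supBhab} and the auxiliary Lemma~\ref{lmm:technical1} genuinely survive the removal of $c_h$, and keeping the constants clean of any dependence on $\gamma$.
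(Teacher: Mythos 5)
Your proposal is correct and follows essentially the same route as the paper: triangle inequality through the interpolant, the discrete inf--sup condition \eqref{eq:inf-supBhab} applied to $\widetilde{u}_I-\widetilde{u}_h$, and the same consistency decomposition as in Theorem~\ref{teorem:error_1}, with each bracket estimated as there. In fact your treatment of the datum term $d_h(f,w_h)-d(f,w_h)$ via the $L^2$-orthogonality of $\Pi^E$ is more careful than the paper's, which only says the term is ``estimated analogously'' even though the analogous bound \eqref{eq:or_11} uses $\|f\|_{1,\Omega}$ and would not directly give the stated $h\|f\|_{0,\Omega}$ rate for $f\in L^2(\Omega)$.
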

\begin{proof}
To derive this result we need to invoke the inf-sup condition \eqref{eq:inf-supBhab} Similarly as the proof of Theorem \ref{teorem:error_1}, 
if $\widetilde{u}=:Tf$ and $\widetilde{u}_h:=T_hf$, from triangle inequality we have 
\begin{equation*}
\|\widetilde{u}-\widetilde{u}_h\|_{1,\O}\leq \|\widetilde{u}-\widetilde{u}_I\|_{1,\O}+\|\widetilde{u}_I-\widetilde{u}_h\|_{1,\O},
\end{equation*}
where $\widetilde{u}_I$ represents the interpolation of $\widetilde{u}$. Setting $v_h=\widetilde{u}_I-\widetilde{u}_h$ in \eqref{eq:inf-supBhab} and using \eqref{eq:source}
and \eqref{eq:source_discrete} we obtain
\begin{multline*}
\underline{\widehat{\beta}}\|\widetilde{u}_I-\widetilde{u}_h\|_{1,\O}\|v_h\|_{1,\O}\leq \underbrace{[d_h(f_h,v_h)-d(f,v_h)]}_{A}\\
+\underbrace{[\CB_{ab}(\widetilde{u}-\Pi^{\nabla}\widetilde{u},v_h)-\CB_{ab,h}(\widetilde{u}_I-\Pi^{\nabla}\widetilde{u},v_h)]}_{B}+\underbrace{[\CB_{ab}(\Pi^{\nabla}\widetilde{u},v_h)-\CB_{ab,h}(\Pi^{\nabla}\widetilde{u},v_h)]}_{C},
\end{multline*}
where each contribution $A$, $B$ and $C$  are estimated analogously as in the proof of Theorem \ref{teorem:error_1} and \eqref{eq:regularity}.
\end{proof}

Therefore we have proved that the discrete solution operator $T_h$ converge in norm to the continuous one $T$, as $h$ goes to zero.

The previous result also holds for the adjoint operators $T_h^*$ and $T^*$. Since the proof is analogous, we skip the details.
\begin{lemma}
\label{eq:adjoint_diff}
There exists a positive constant $C_{\kappa,\mathbf{\vartheta}}$, independent of $h$, such that
\begin{equation*}
\|T^*-T_h^*\|_{1,\O}\leq C_{\kappa,\mathbf{\vartheta}}h\|f\|_{0,\O}.
\end{equation*}
\end{lemma}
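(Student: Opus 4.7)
The plan is to mirror the argument of Lemma \ref{lmm:conv1} with the roles of the two arguments in $\mathcal{B}_{ab}$ and $\mathcal{B}_{ab,h}$ swapped. Setting $\widetilde{u}^*:=T^*f$ and $\widetilde{u}_h^*:=T_h^*f$, the first step is to observe that the adjoint continuous problem is itself a convection--diffusion problem (with advective field $-\boldsymbol{\vartheta}$, once the divergence-free assumption on $\boldsymbol{\vartheta}$ is used to integrate by parts), so it inherits the regularity pickup $\|\widetilde{u}^*\|_{2,\O}\le C\|f\|_{0,\O}$ from \eqref{eq:regularity}. This reduces the claim to an $H^1$ error estimate of the form $\|\widetilde{u}^*-\widetilde{u}_h^*\|_{1,\O}\le C_{\kappa,\boldsymbol{\vartheta}}\,h\|\widetilde{u}^*\|_{2,\O}$.

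The key preliminary that must be written out before invoking Lemma \ref{lmm:conv1} verbatim is an adjoint discrete inf-sup condition, namely the existence of $\underline{\widehat{\beta}}^{*}>0$ such that, for all $h<h_0$,
\begin{equation*}
\sup_{w_h\in V_h}\frac{\mathcal{B}_{ab,h}(w_h,v_h)}{\|w_h\|_{1,\O}}\ge \underline{\widehat{\beta}}^{*}\|v_h\|_{1,\O}\qquad\forall v_h\in V_h.
\end{equation*}
This is proved by the exact Fortin-style construction used in Lemma \ref{lmm:disc_inf_sup}: pick $w^{*}\in H_0^1(\O)$ realising the continuous adjoint inf-sup against $v_h$, produce $w_h^{*}\in V_h$ via Lemma \ref{lmm:technical1} (whose statement is symmetric in the two slots, since $a(\cdot,\cdot)$ is symmetric), and then control the consistency errors $b(\cdot,w^*-w_h^*)$, $b(\cdot,w_h^*)-b_h(\cdot,w_h^*)$, etc., by factors of $h$ using the piecewise-constant assumption on $\kappa$, the $L^2$-projection property of $\Pi^{E}$, and the equivalence \eqref{eqrefgt2} between $\vertiii{\cdot}_E$ and $a^E(\cdot,\cdot)$. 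The resulting perturbation $C_{\kappa,\boldsymbol{\vartheta}}h$ is absorbed by the continuous constant for $h$ small enough.

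With the adjoint inf-sup in hand, the $H^1$ estimate follows the same template as Lemma \ref{lmm:conv1}: introduce the interpolant $\widetilde{u}^{*}_I$ of $\widetilde{u}^{*}$ from Lemma \ref{estima2}, split $\|\widetilde{u}^*-\widetilde{u}_h^*\|_{1,\O}\le \|\widetilde{u}^*-\widetilde{u}^{*}_I\|_{1,\O}+\|\widetilde{u}^{*}_I-\widetilde{u}_h^*\|_{1,\O}$, and test the adjoint inf-sup with $v_h=\widetilde{u}^{*}_I-\widetilde{u}_h^*$. The resulting numerator decomposes as a data-consistency term $d_h(f_h,\cdot)-d(f,\cdot)$, an approximation term $\mathcal{B}_{ab}(\cdot,\widetilde{u}^{*}-\Pi^{\nabla}\widetilde{u}^{*})-\mathcal{B}_{ab,h}(\cdot,\widetilde{u}^{*}_I-\Pi^{\nabla}\widetilde{u}^{*})$, and a consistency term $\mathcal{B}_{ab}(\cdot,\Pi^{\nabla}\widetilde{u}^{*})-\mathcal{B}_{ab,h}(\cdot,\Pi^{\nabla}\widetilde{u}^{*})$, each of which is bounded by $C_{\kappa,\boldsymbol{\vartheta}}h\|\widetilde{u}^{*}\|_{2,\O}\|v_h\|_{1,\O}$ using Lemmas \ref{lmm:bh}--\ref{estima2}, the stability bounds \eqref{eqrefgt1}--\eqref{eqrefgt3}, and the piecewise-constant-coefficient trick to move $\Pi^{E}$ past $\kappa$ and $\boldsymbol{\vartheta}$.

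The main obstacle, as in the primal case, lies not in the abstract structure but in the adjoint inf-sup: one must verify that the Fortin auxiliary problem $a_h(\cdot,w_h^*)=a(\cdot,w^*)$ (rather than $a_h(w_h^*,\cdot)=a(w^*,\cdot)$) is still solvable with the same $h$-stability as in Lemma \ref{lmm:technical1}, which is true only because $a$ and $a_h$ are symmetric. The remaining perturbation estimates for the convective term $b$ are algebraically the same as in Lemma \ref{lmm:disc_inf_sup} up to exchanging arguments, and the final $L^2$-duality step used there is not needed here since the $H^1$ estimate is the stated goal.
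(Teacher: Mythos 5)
Your proposal is correct and follows exactly the route the paper intends: the paper itself gives no proof of this lemma, stating only that it is ``analogous'' to Lemma~\ref{lmm:conv1}, and your argument is a faithful and complete fleshing-out of that analogy (adjoint regularity via the divergence-free integration by parts, an adjoint discrete inf-sup obtained by transposing the Fortin construction of Lemma~\ref{lmm:disc_inf_sup}, then the same three-term error decomposition). You also correctly isolate the one point that genuinely needs checking and that the paper glosses over, namely that the transposed auxiliary problem $a_h(\cdot,w_h^*)=a(\cdot,w^*)$ is solvable with the same stability because $a$ and $a_h$ are symmetric.
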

As a direct consequence of the above two lemmas, standard results on spectral approximation can be used (see \cite{MR2652780,MR0203473}).

We present as a consequence of the above, that the proposed method does not introduce spurious eigenvalues (see \cite{MR0203473}).
\begin{theorem}
Let $G \subset \mathbb{C}$  be an open set containing $\sp(T)$. Then, there exists $h_0>0$ such that $\sp(T_h)\subset G$ for all $h<h_0$.
\end{theorem}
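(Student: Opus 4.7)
My plan is to deduce this spurious-free result from operator-norm convergence of $T_h$ to $T$ combined with the classical resolvent technique of \cite{MR0203473}. First I upgrade Lemma \ref{lmm:conv1} to norm convergence: the lemma bounds $\|(T-T_h)f\|_{1,\Omega}$ by $C_{\kappa,\vartheta}\,h\,\|f\|_{0,\Omega}$, and invoking the Poincar\'e inequality $\|f\|_{0,\Omega}\leq C_P\|f\|_{1,\Omega}$ on $H_0^1(\Omega)$ yields
$$\|T-T_h\|_{\mathcal{L}(H_0^1(\Omega))} \leq C\,h.$$
This upgrade is what allows Osborn-type perturbation arguments to bite on isolated parts of the spectrum.

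Next I establish a uniform resolvent bound on $F:=\mathbb{C}\setminus G$. Since $T$ is compact, $\sp(T)$ is a compact subset of $\mathbb{C}$ with $0$ as its only possible accumulation point, and as $\sp(T)\subset G$ with $G$ open, $F$ is a closed subset of the resolvent set $\rho(T)$. The resolvent $z\mapsto R(z,T):=(zI-T)^{-1}$ is analytic on $\rho(T)$, and admits the Neumann estimate $\|R(z,T)\|\leq(|z|-\|T\|)^{-1}$ for $|z|>\|T\|$, so it decays at infinity. Combining this decay with the continuity of $\|R(z,T)\|$ on the bounded portion of $F$, one obtains
$$M:=\sup_{z\in F}\|R(z,T)\|_{\mathcal{L}(H_0^1(\Omega))}<\infty.$$

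Finally, I perturb the resolvent via the factorization
$$zI - T_h \;=\; (zI-T)\bigl(I - R(z,T)(T-T_h)\bigr).$$
Choosing $h_0>0$ so small that $CMh_0\leq 1/2$ forces $\|R(z,T)(T-T_h)\|\leq 1/2$ uniformly for $z\in F$ and $h<h_0$. The Neumann series then makes the right-hand factor invertible in $\mathcal{L}(H_0^1(\Omega))$, and hence $zI-T_h$ is invertible as well, i.e.\ $z\in\rho(T_h)$. Thus $F\subset\rho(T_h)$, which is exactly the statement $\sp(T_h)\subset G$. The main subtlety I anticipate is verifying that $M<\infty$: what makes it go through is that $G$ is open and contains $0\in\sp(T)$, so $F$ stays a positive distance from $0$; the Neumann decay handles $|z|$ large, and continuity handles the annular portion of $F$ in between, closing the uniform bound.
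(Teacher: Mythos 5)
Your argument is correct and is essentially the paper's approach: the paper simply invokes the norm convergence of $T_h$ to $T$ furnished by Lemma \ref{lmm:conv1} and defers the spurious-free conclusion to the standard spectral approximation theory of \cite{MR0203473}, which is precisely the uniform-resolvent-bound plus Neumann-series argument you carry out in detail. One small slip: the factorization should read $zI-T_h=(zI-T)\bigl(I+R(z,T)(T-T_h)\bigr)$ (plus sign, since $zI-T_h=(zI-T)+(T-T_h)$), but this does not affect the norm estimate or the conclusion.
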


\subsection{Error estimates}
The goal of this section is deriving error estimates for the eigenfunctions and eigenvalues. We first recall the definition of spectral projectors. Let $\mu$ be a nonzero isolated eigenvalue of $T$ with algebraic multiplicity $m$ and let $\Gamma$
be a disk of the complex plane centered in $\mu$, such that $\mu$ is the only eigenvalue of $T$ lying in $\Gamma$ and $\partial\Gamma\cap\sp(T)=\emptyset$. With these considerations at hand, we define the spectral projections of $E$ and $E^*$, associated to $T$ and $T^*$, respectively, as follows:
\begin{enumerate}
\item The spectral projector of $T$ associated to $\mu$ is $\displaystyle E:=\frac{1}{2\pi i}\int_{\partial\Gamma} (zI-T)^{-1}\,dz;$
\item The spectral projector of $T^*$ associated to $\bar{\mu}$ is $\displaystyle E^*:=\frac{1}{2\pi i}\int_{\partial\Gamma} (zI-T^*)^{-1}\,dz,$
\end{enumerate}
where $I$ represents the identity operator. Let us remark that $E$ and $E^*$ are the projections onto the generalized eigenvector $R(E)$ and $R(E^*)$, respectively. 

A consequence of Lemma \ref{lmm:conv1} is that there exist $m$ eigenvalues, which lie in $\Gamma$, namely $\mu_h^{(1)},\ldots,\mu_h^{(m)}$, repeated according their respective multiplicities, that converge to $\mu$ as $h$ goes to zero. With this result at hand, we introduce the following spectral projection
\begin{equation*}
E_h:=\frac{1}{2\pi i}\int_{\partial\Gamma} (zI-T_h)^{-1}\,dz,
\end{equation*}
which is a projection onto the discrete invariant subspace $R(E_h)$ of $T$, spanned by the generalized eigenvector of $T_h$ corresponding to 
 $\mu_h^{(1)},\ldots,\mu_h^{(m)}$.

Now we recall the definition of the \textit{gap} $\hdel$ between two closed
subspaces $\CM$ and $\CN$ of $L^2(\O)$:
$$
\hdel(\CM,\CN)
:=\max\big\{\delta(\CM,\CN),\delta(\CN,\CM)\big\}, \text{ where } \delta(\CM,\CN)
:=\sup_{\underset{\left\|x\right\|_{0,\O}=1}{x\in\CM}}
\left(\inf_{y\in\CN}\left\|x-y\right\|_{0,\O}\right).
$$
We end this section proving error estimates for the eigenfunctions and eigenvalues.
\begin{theorem}
\label{thm:errors1}
There exists $C_{\kappa,\mathbf{\vartheta}}>0$ such that
\begin{equation*}
\hdel(R(E),R(E_h))\leq C_{\kappa,\mathbf{\vartheta}}h\quad\text{and}\quad
|\mu-\mu_h|\leq C_{\kappa,\mathbf{\vartheta}}h^2.
\end{equation*}
\end{theorem}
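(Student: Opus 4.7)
The plan is to place ourselves in the abstract spectral approximation framework of Babu\v{s}ka--Osborn \cite{BO,MR0203473}. The hypotheses required to apply that theory are two-fold: compactness of the limit operator $T$, which has already been observed; and norm convergence $T_h \to T$, $T_h^* \to T^*$, which is precisely the content of Lemmas \ref{lmm:conv1} and \ref{eq:adjoint_diff}. Consequently, for $h$ smaller than some $h_0$, exactly $m$ eigenvalues of $T_h$ (counted with algebraic multiplicity) lie inside the contour $\Gamma$ enclosing $\mu$, and the associated discrete spectral projector $E_h$ converges to $E$.

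For the bound on the gap $\hdel(R(E), R(E_h))$, I would invoke the classical abstract estimate
\[
\hdel(R(E), R(E_h)) \leq C\,\|(T - T_h)|_{R(E)}\|_{\mathcal{L}(H^1_0(\Omega))}.
\]
A generic $u \in R(E)$ is a generalized eigenfunction of $T$, so applying the regularity shift \eqref{eq:regularity} governing the source problem defining $T$ a finite number of times yields $u \in H^2(\Omega) \cap H^1_0(\Omega)$. Lemma \ref{lmm:conv1} then gives $\|(T - T_h)u\|_{1,\Omega} \leq C_{\kappa,\mathbf{\vartheta}}\,h\,\|u\|_{0,\Omega}$, and the desired $O(h)$ estimate follows since $R(E)$ is finite-dimensional, so all norms are equivalent on it.

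The $O(h^2)$ rate for the eigenvalue is obtained through Osborn's sharpened bound for nonsymmetric eigenvalue problems, namely
\[
|\mu - \widehat{\mu}_h| \leq C\,\bigl\|(T - T_h)|_{R(E)}\bigr\|\cdot\bigl\|(T^* - T_h^*)|_{R(E^*)}\bigr\|,
\]
where $\widehat{\mu}_h$ denotes the arithmetic mean $\tfrac{1}{m}\sum_{i=1}^{m}\mu_h^{(i)}$ of the $m$ discrete eigenvalues clustering around $\mu$ (which collapses to $\mu_h$ itself when $\mu$ is simple). Since the adjoint eigenfunctions lie in $H^2(\Omega)\cap H^1_0(\Omega)$ by the same regularity shift applied to the adjoint source problem, both factors on the right-hand side are controlled by $C_{\kappa,\mathbf{\vartheta}}\,h$ by Lemmas \ref{lmm:conv1} and \ref{eq:adjoint_diff}, producing the announced $O(h^2)$ rate.

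The main obstacle I foresee is establishing the $H^2$-regularity of the elements of $R(E^*)$, which is not stated explicitly earlier. Concretely, one must check that the adjoint source problem defining $T^*$ enjoys the same elliptic regularity as \eqref{eq:regularity}. Fortunately, the standing hypothesis $\div\mathbf{\vartheta}=0$ (already exploited to derive the inf-sup condition for $\CB_{ab}(\cdot,\cdot)$) implies that the formal adjoint PDE is again a convection-diffusion equation with smooth coefficients and reversed wind, so the same elliptic regularity theory applies verbatim. A secondary but less delicate issue is that, in the nonconforming VEM setting, the Osborn-type bound generates consistency residuals involving the projections $\PiK$ and $\Pi^{E}$; these must be absorbed into the two operator-norm factors exactly as was done in the proofs of Lemma \ref{lmm:disc_inf_sup} and Theorem \ref{teorem:error_1}.
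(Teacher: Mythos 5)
Your framework (Babu\v{s}ka--Osborn, norm convergence of $T_h$ and $T_h^*$ from Lemmas \ref{lmm:conv1} and \ref{eq:adjoint_diff}) is the right one, and your treatment of the gap estimate matches the paper's, which indeed dispatches it as a direct consequence of the norm convergence. The genuine gap is in the eigenvalue estimate. The ``sharpened bound''
\[
|\mu-\widehat{\mu}_h|\le C\,\|(T-T_h)|_{R(E)}\|\;\|(T^*-T_h^*)|_{R(E^*)}\|
\]
is \emph{not} what Osborn's theorem gives for this discretization. The general form of the estimate is
\[
|\mu-\widehat{\mu}_h|\le \frac{1}{m}\sum_{k=1}^m|\langle(T-T_h)u_k,u_k^*\rangle|
+C\,\|(T-T_h)|_{R(E)}\|\;\|(T^*-T_h^*)|_{R(E^*)}\|,
\]
and the first (consistency) term only disappears, or collapses into the product of errors, for conforming Galerkin schemes in which the discrete bilinear forms are the restrictions of the continuous ones. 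Here $\mathcal{B}_{ab,h}\neq\mathcal{B}_{ab}$ and $d_h\neq d$ on $V_h$ because of the projectors $\PiK$ and $\Pi^{E}$ and the stabilization $S^E$, so the consistency term survives as a separate additive contribution. A naive bound on it, $|\mathcal{B}_{ab}((T-T_h)u_k,u_k^*)|\le M\|(T-T_h)u_k\|_{1,\O}\|u_k^*\|_{1,\O}$, is only $O(h)$ and would destroy the double order. Your closing remark that the consistency residuals ``must be absorbed into the two operator-norm factors'' is therefore not a valid repair: they cannot be absorbed there, and handling them is not secondary --- it is the bulk of the paper's proof.

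Concretely, the paper inserts the interpolant $v_h=(u_k^*)_I$ and splits
\[
\langle(T-T_h)u_k,u_k^*\rangle=\underbrace{\mathcal{B}_{ab}((T-T_h)u_k,u_k^*-(u_k^*)_I)}_{\textbf{(I)}}
+\underbrace{d(u_k,v_h)-d_h(u_k,v_h)}_{\textbf{(II)}}
+\underbrace{\mathcal{B}_{ab,h}(T_hu_k,v_h)-\mathcal{B}_{ab}(T_hu_k,v_h)}_{\textbf{(III)}},
\]
then shows each piece is $O(h^2)$: \textbf{(I)} as a product of two first-order errors, \textbf{(II)} and \textbf{(III)} as products of two first-order projection/interpolation errors, using the stabilization bounds \eqref{eqrefgt1}--\eqref{eqrefgt3}, scaled trace inequalities, and the $H^2$ regularity of $u_k$ and $u_k^*$. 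This element-by-element consistency analysis is the missing core of your argument. On the positive side, your observation that the $H^2$ regularity of $R(E^*)$ needs justification (via $\div\mathbf{\vartheta}=0$ and the convexity of $\O$) is a legitimate point that the paper itself leaves implicit.
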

\begin{proof}
The proof of the gap between the eigenspaces is a direct consequence of the convergence in norm between $T$ and $T_h$ as $h$ goes to zero.
We focus on the double order of convergence for the eigenvalues. Let $\{u_k\}_{k=1}^m$ be such that $Tu_k=\mu u_k$, for $k=1,\ldots,m$. A dual basis for $R(E^*)$ is $\{u_k^*\}_{k=1}^m$. This basis satisfies $\mathcal{B}_{ab}(u_k,u_l^*)=\delta_{k.l},$
where $\delta_{k.l}$ represents the Kronecker delta.

On the other hand, the following identity holds
\begin{equation*}
|\mu-\widehat{\mu}_h|\leq\frac{1}{m}\sum_{k=1}^m|\langle(T-T_h)u_k,u_k^* \rangle|+C\|(T-T_h)|_{R(E)} \| \|(T^*-T_h^*)|_{R(E^*)}\|,
\end{equation*}
where $C>0$. We observe that the bound for the last two terms in the inequality above, are directly obtained from Lemma \ref{lmm:conv1}. Hence, our task is  bound the remaining first term. In order to do this,  the following identity can be obtained
\begin{multline*}
|\langle(T-T_h)u_k,u_k^* \rangle|=\mathcal{B}_{ab}((T-T_h)u_k,u_k^*)\\
=\mathcal{B}_{ab}((T-T_h)u_k,u_k^*-v_h)+\mathcal{B}_{ab}(Tu_k,v_h))-\mathcal{B}_{ab}(T_hu_k,v_h)\\
=\underbrace{\mathcal{B}_{ab}((T-T_h)u_k,u_k^*-v_h)}_{\textbf{(I)}}+\underbrace{(d(u_k,v_h)-d_h(u_k,v_h))}_{\textbf{(II)}}\\
+\underbrace{(\mathcal{B}_{ab,h}(T_hu_k,v_h)-\mathcal{B}_{ab}(T_hu_k,v_h)}_{\textbf{(III)}},
\end{multline*}
for all $v_h\in V_h$. Now we bound each of the contributions \textbf{(I)}, \textbf{(II)} and \textbf{(III)}. For \textbf{(I)}
first we set $v_h:=(u_k^*)_I$. Then we have
\begin{multline}
\label{eq:termI}
\textbf{(I)}=\mathcal{B}_{ab}((T-T_h)u_k,u_k^*-(u_k^*)_I)
\leq C_{\kappa,\mathbf{\vartheta}}\|(T-T_h)u_k\|_{1,\O}\|u_k^*-(u_k^*)_I\|_{1,\O}\\
\leq C_{\kappa,,\mathbf{\vartheta}}|(T-T_h)u_k|_{1,\O}|u_k^*-(u_k^*)_I|_{1,\O}
\leq C_{\kappa,,\mathbf{\vartheta}}h^{2}|u_k|_{1,\O}|u_k^*|_{1,\O},
\end{multline}
where we have used approximation properties for $(u_k^*)_I$ and the convergence in norm given by Lemma \ref{lmm:conv:normT}.
Now to control term \textbf{(II)}, we notice that
\begin{multline}
\label{eq:termII}
(\mathbf{II})\leq \sum_{E\in\CT_h}\|u_k-\Pi^Eu_k\|_{0,E}\|(u_k^*)_I-\Pi^E(u_k^*)_I\|_{0,E}\\
\leq C\sum_{E\in\CT_h}h_E |u_k|_{1,E}(\|u_k^*-(u_k^*)_I\|_{0,E}+\|u_k^*-\Pi^Eu_k^*\|_{0,E})\\
\leq Ch^2\|u_k\|_{1,\O}\|u_k^*\|_{1,\O}.
\end{multline}
For the term \textbf{(III)} we define $w_h:=T_hu_k$. Then
\begin{multline*}
\textbf{(III)}=\sum_{E\in\CT_h}[\mathcal{B}_{ab,h}^E(w_h,(u_k^*)_I)-\mathcal{B}_{ab}^E(w_h,(u_k^*)_I)]\\
=\underbrace{\sum_{E\in\CT_h} [a_h^E(w_h,(u_k^*)_I)-a^E(w_h,(u_k^*)_I)]}_{A_1}+\underbrace{\sum_{E\in\CT_h}[b_h^E(w_h,(u_k^*)_I)-b^E(w_h,(u_k^*)_I)]}_{A_2},
\end{multline*}
where we need to control the terms $A_1$ and $A_2$. For $A_1$ we have
\begin{multline}\label{eq:intermedio}
a_h^E(w_h,(u_k^*)_I)-a^E(w_h,(u_k^*)_I)\leq C_{\kappa}\left(\vertiii{w_h-\Pi^{\nabla,E}w_h}_E\vertiii{(u_k^*)_I-\Pi^{\nabla,E}(u_k^*)_I}_E\right.\\
 \left.+|w-\Pi^{\nabla,E}w_h|_{1,E}|(u_k^*)_I-\Pi^{\nabla,E}(u_k^*)_I|_{1,E}\right).
 \end{multline}
 
Now our task is to estimate each contribution on the right hand side of \eqref{eq:intermedio}. Notice that thanks to the additional regularity of the eigenfunction we have
 \begin{equation}
 \label{eq:cotas_semis}
 |w-\Pi^{\nabla,E}w_h|_{1,E}|(u_k^*)_I-\Pi^{\nabla,E}(u_k^*)_I|_{1,E}\leq Ch_E^2|w_h|_{2,E},
 \end{equation}
 together with the following estimate
 \begin{equation}
 \label{eq:cota_tripleI}
  \vertiii{w_h-\Pi^{\nabla,E}w_h}_E\leq Ch_E|w_h|_{2,E}.
 \end{equation}
 
Observe that using the definition of $\vertiii{\cdot}_E$ we obtain
\begin{multline*}
\vertiii{(u_k^*)_I-\Pi^{\nabla,E}(u_k^*)_I}_E^2=S^E((u_k^*)_I-\Pi^{\nabla,E}(u_k^*)_I,(u_k^*)_I-\Pi^{\nabla,E}(u_k^*)_I)\\
=h_E|(u_k^*)_I-\Pi^{\nabla,E}(u_k^*)_I|_{1,\partial E}^2\\
\leq h_E|u_k^*-(u_k^*)_I|_{1,\partial E}^2+h_E|u_k^*-\Pi^{\nabla,E}u_k^*|_{1,\partial E}^2+h_E|\Pi^{\nabla,E}(u_k^*-u_k^*)_I|_{1,\partial E}^2.
 \end{multline*}
We note that proceeding as in the proof of  \cite[Theorem 4.5]{BLR2017}, and using that $\PiK\in \P_{1}(E)$ which is stable in $|\cdot|_{1,E}$,  we have  
\begin{equation}
\label{eq:cota_tripleII}
\vertiii{(u_k^*)_I-\Pi^{\nabla,E}(u_k^*)_I}_E\leq Ch_E|u_k^*|_{2,E}.
\end{equation}
Thus, gathering \eqref{eq:cota_tripleI},\eqref{eq:cota_tripleII}, and \eqref{eq:cotas_semis},  an estimate for \eqref{eq:intermedio} is 
\begin{equation}
\label{eq:cotaA1_final}
a_h^E(w_h,(u_k^*)_I)-a^E(w_h,(u_k^*)_I)\leq C_\kappa h_E^2|u_k|_{2,E}|u_k^*|_{2,E}.
\end{equation}

 On the other hand, proceeding as in the proof of Lemma \ref{lmm:disc_inf_sup},  we obtain the follwing estimate for $A_2$
\begin{equation}
\label{eq:cotaA2_final}
b_h^E(w_h,(u_k^*)_I)-b^E(w_h,(u_k^*)_I)\leq C_{\mathbf{\vartheta}}h_E^2|w_h|_{2,E}^2|u_k^*|_{2,E}.
 \end{equation}
Hence, summing over all $E\in\CT_h$ in \eqref{eq:cotaA1_final} and \eqref{eq:cotaA2_final}, we obtain the desire bounds of $A_1$ and $A_2$, respectively, implying that 
\begin{equation}
\label{eq:termIII}
(\mathbf{III})\leq C_{\mathbf{\vartheta}}h^{2}|u_k|_{1,\O}|u_k^*|_{1,\O}.
\end{equation}
 Finally, combining \eqref{eq:termI}, \eqref{eq:termII}, and \eqref{eq:termIII} we conclude the proof.
\end{proof}

\section{Numerical experiments}
\label{sec:numerics}
In this section we report some numerical tests in order to explore with 
computational evidence the performance of the numerical method proposed in
our paper. 

We have implemented a MATLAB code  for the tests considering the lowest order
VEM for our  space \eqref{Vk}. We divide this section into two subsections: the first one reports numerical tests for the source problem where
we are interested in the computation of errors and convergence rates for the $L^2$ norm and the $H^1$ seminorm. Since the VEM solution $u_h$ is not explicitly known inside the elements, we compare $u$ with the $L^2$-projection of $u_h$ on $\P_1$, the same consideration will be used for the $H^1$ seminorm. ie, $\|u-\Pi u_h\|_{0,\O}$ for $L^2$ and $|u-\Pi^{\nabla} u_h|_{1,\O}$ for $H^1$.

 The second part is 
dedicated to the eigenvalue problem where our task is to assess the performance of the method on the approximation of the spectrum of $T$. 

Along all our experimental section we consider meshes that only 
satisfy assumption $\mathbf{A1}$.
In Figure~\ref{FIG:meshes}, we present plots of the polygonal meshes that
we will consider for our tests. We note that the family of polygonal meshes $\CT_h^1$
have been obtained by gluing two different polygonal meshes at $y=0.6$.
It can be seen that very small edges compared with the element
diameter appears on the interface of the resulting mesh.

On the other hand, the family of polygonal meshes $\CT_h^2$ have been obtained
from a triangular mesh with an additional point on
each edge as a new degree of freedom which has been moved
to a distance $h_e^{2}$ from one vertex and $(h_{e}-h_{e}^{2})$
from the other. We remark  that this family satisfy $\mathbf{A1}$
but fail to satisfy the usual assumption that distance between
any two of its vertices is greater than or equal to $Ch_{\E}$ for each polygon,
since the length of the smallest edge is $h_e^2$, while the diameter
of the element is bounded above by a multiple of $h_e$.
The refinement level for the meshes will be denoted by $N$,
which corresponds to the number of subdivisions in the abscissae. 

\begin{figure}[H]
\begin{center}
\begin{minipage}{13cm}
\centering\includegraphics[height=4.0cm, width=4.0cm]{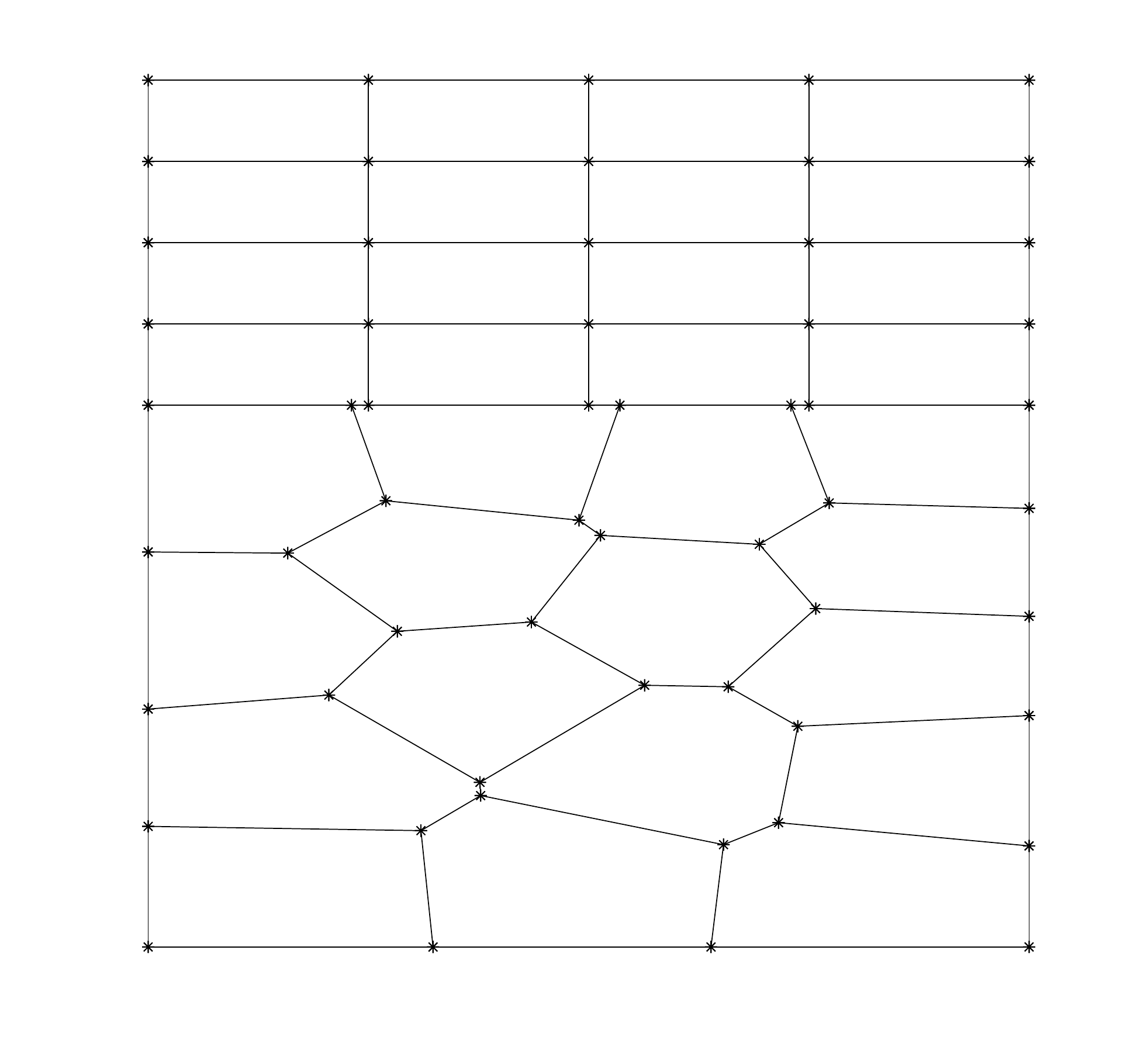}
\centering\includegraphics[height=4.0cm, width=4.0cm]{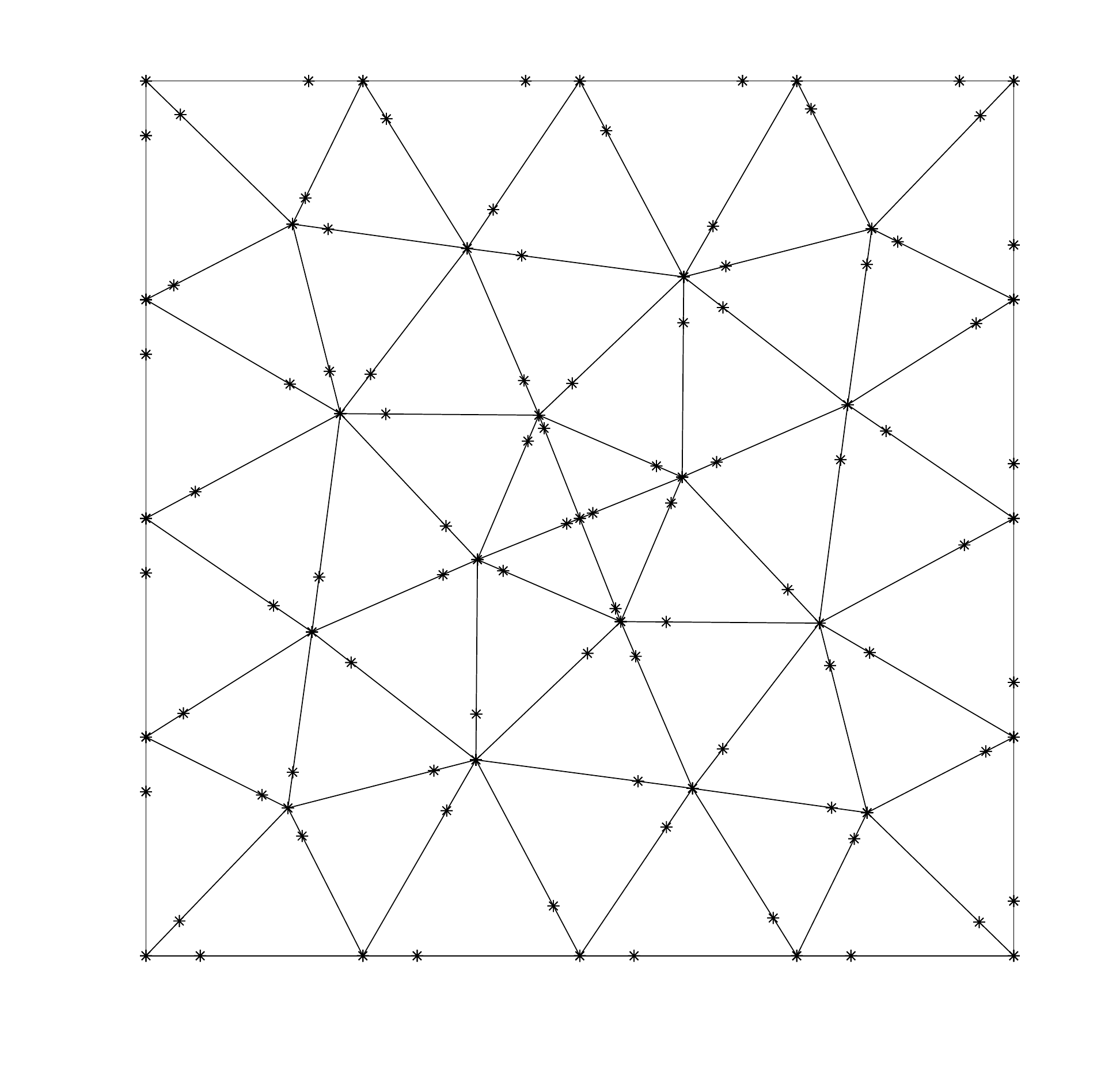}
\centering\includegraphics[height=4.0cm, width=4.0cm]{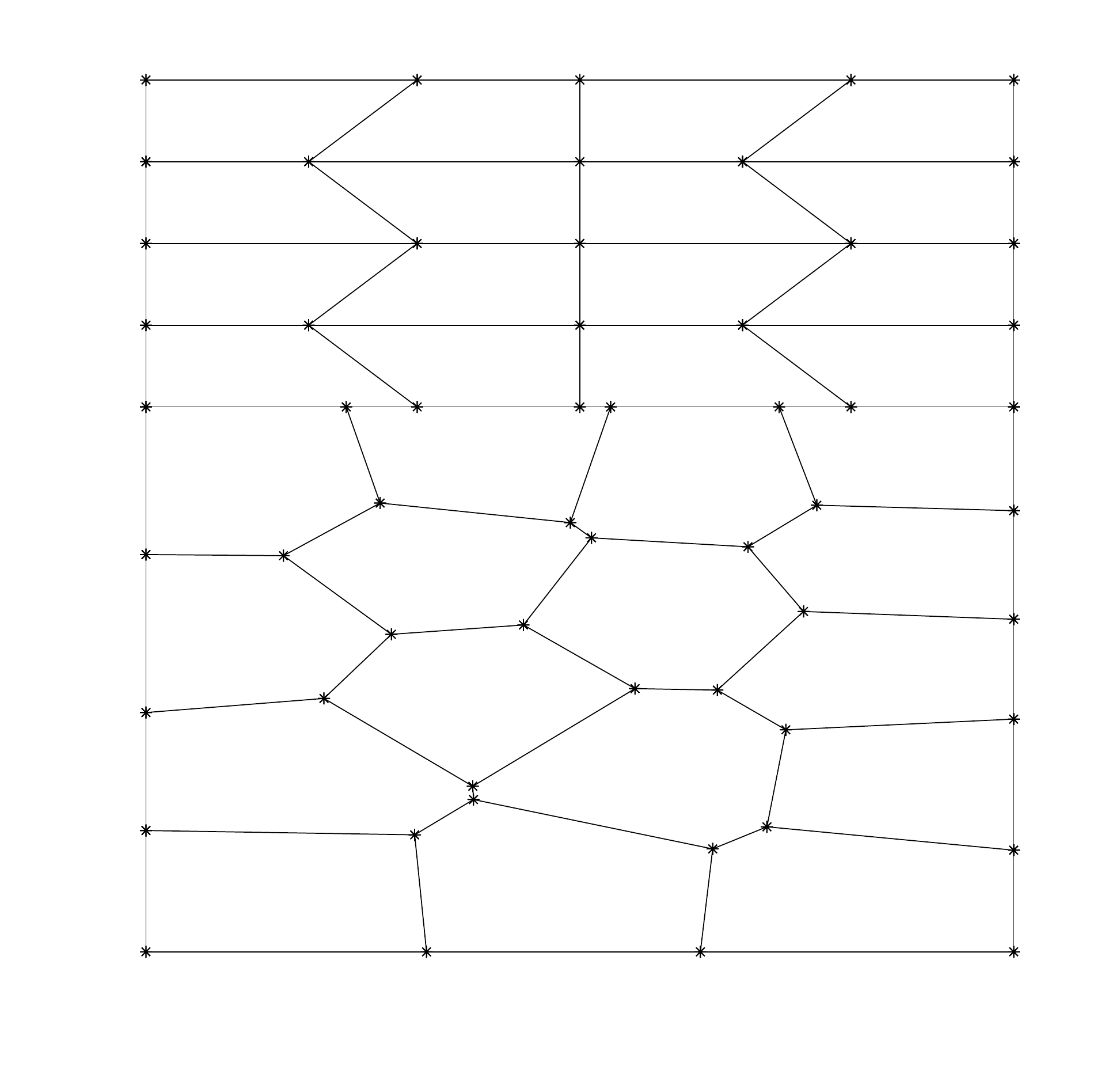}
\end{minipage}
\caption{Sample meshes with small edges. From left to right:
$\CT_h^1$, $\CT_h^2$ for $N=4$ and $\CT_h^3$ for $N=4$.}
\label{FIG:meshes}
\end{center}
\end{figure}

\subsection{The load problem}
On this test our interest is to approximate the solution of \eqref{eq:weak_state_equation} for a given load $f\in H^1(\Omega)$. The aim of this test is to compute error rates in the corresponding norms of the problem. In this test, we consider two scenarios: the first one is considering the coefficient $\kappa(\mathbf{x})$ constant in the whole domain $\Omega$ and hence, piecewise constant on each polygon of the mesh, whereas in the second  we consider $\kappa(\mathbf{x})$ beyond our developed theory, where this coefficient is a bounded
function. In both tests, the domain  in which we state problem \eqref{eq:state_equation} is  the unit square $\Omega:=(0,1)^2$ with null Dirichlet boundary condition on $\partial\Omega$.
\subsubsection{Piecewise constant $\kappa(\mathbf{x})$}
Here, the value of  $\kappa(\mathbf{x})=1= \gamma(\mathbf{x})$, for all  $\mathbf{x}\in\Omega$
and
$\mathbf{\vartheta}(\mathbf{x}):=(x,y)^{\texttt{t}}$.
The  load  $f$ and Dirichlet boundary conditions are chosen in such a way
that the exact solution is $u(\mathbf{x}):=\sin(\pi x)\sin(\pi y).$

In Figure \ref{FIG:errorkappa1} we  show, in log-log scale, the error convergence curves in $L^2$ and $H^1$ between the solution $u$ and the polynomial projection of the virtual solution $u_h$.
\begin{figure}[H]
\begin{center}
\begin{minipage}{13cm}
\centering\includegraphics[height=5.0cm, width=4.0cm]{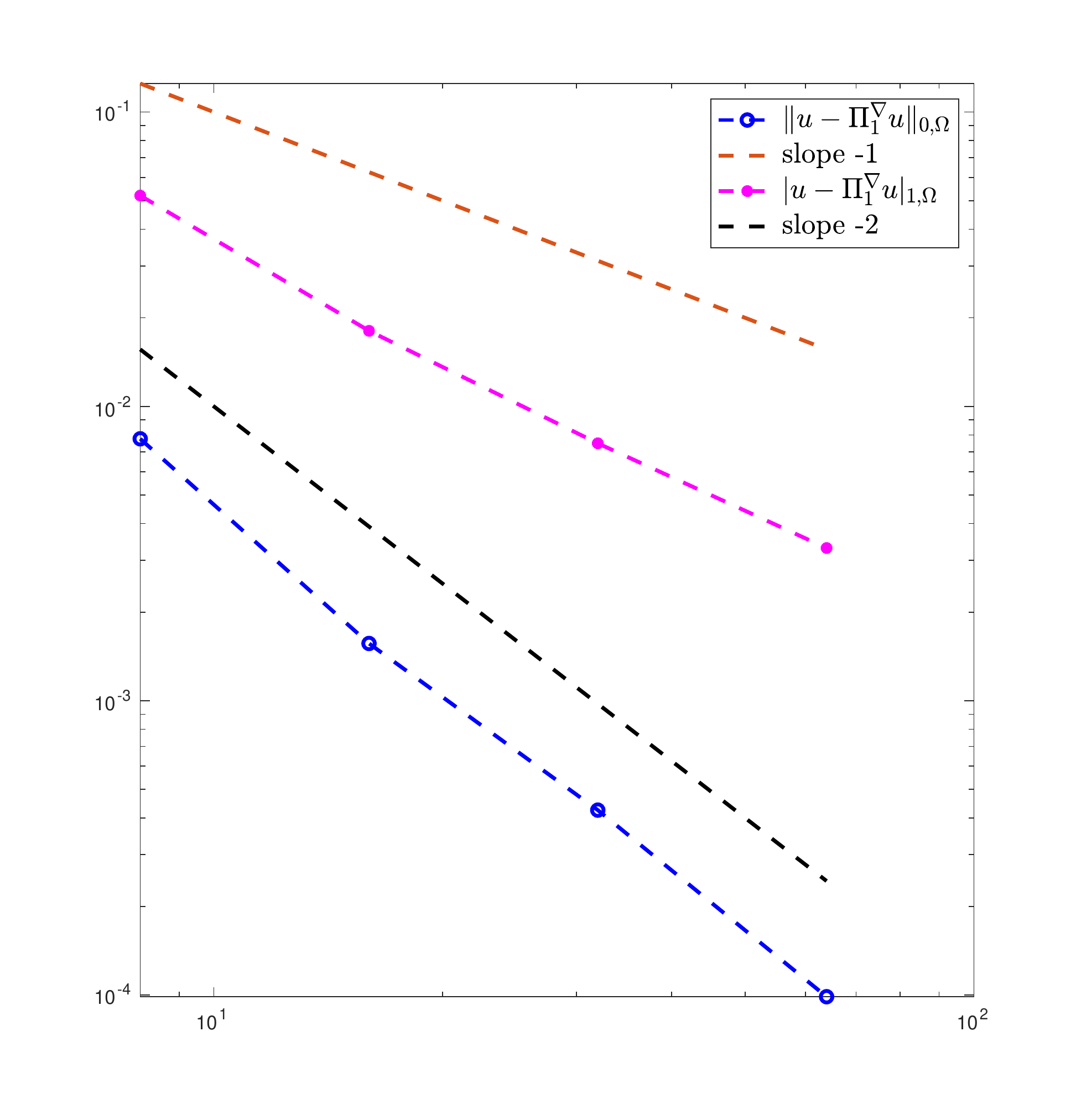}
\centering\includegraphics[height=5.0cm, width=4.0cm]{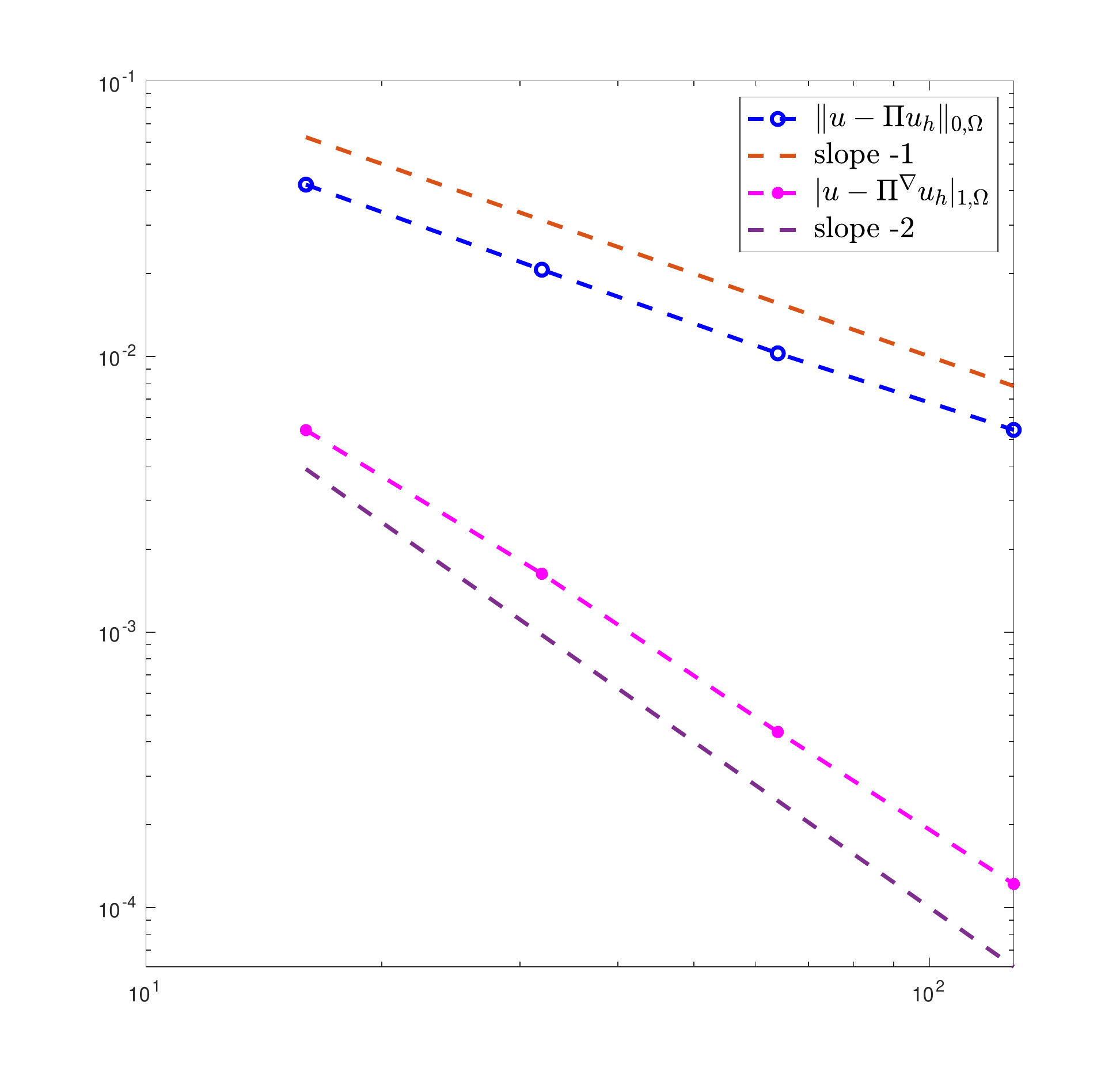}
\centering\includegraphics[height=5.0cm, width=4.0cm]{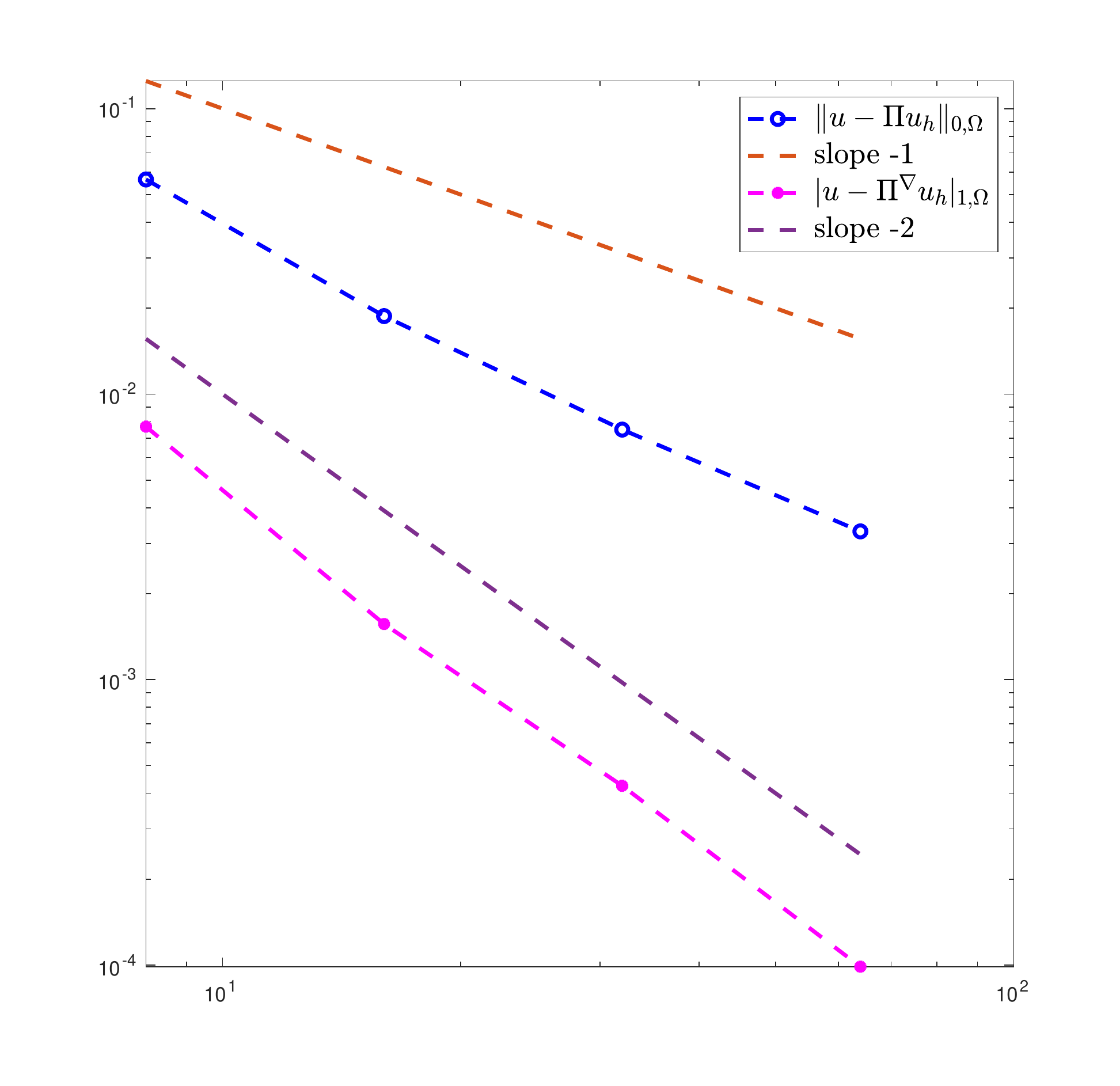}
\end{minipage}
\caption{Test 1: $L^2$ error with seminorm $H^1$ error for $\CT_h^1$ (left), $\CT_h^2$ (middle)and $\CT_h^3$ (right).}
\label{FIG:errorkappa1}
\end{center}
\end{figure}

We observe from Figure  \ref{FIG:errorkappa1} a clear quadratic order of convergence for error in $L^2$ and order 1 for the seminorm. Note that this is the optimal order, as demonstrated in the theoretical section.
\subsubsection{Bounded and smooth $\kappa(\mathbf{x})$}
In this test, we consider the following  functions
\[\kappa(\mathbf{x}):=1,
\qquad \mathbf{\vartheta}(\mathbf{x}):=\left( \begin{array}{c}
x\\
y
\end{array} \right),\qquad \text{ and }\qquad \gamma(\mathbf{x}):=x^2+y^3,\]
and with right-hand side and Dirichlet boundary conditions defined in such a way
that the exact solution is
$$u(\mathbf{x}):=(x-x^2)(y-y^2)+\sin(2\pi x)\sin(2\pi y).$$

Let us remark that the present test goes beyond of the developed theory, since for all the calculations hold, the hypothesis on the coefficients
is that all of the must be piecewise constant on each polygon, which in   Test 1 hold. However, the method is robust
independently of this assumption. In Figure \ref{FIG:error_no_constant} we observe precisely what we claim, where the error decreases for each of the meshes considered. 

\begin{figure}[H]
\begin{center}
\begin{minipage}{13cm}
\centering\includegraphics[height=5.0cm, width=4.0cm]{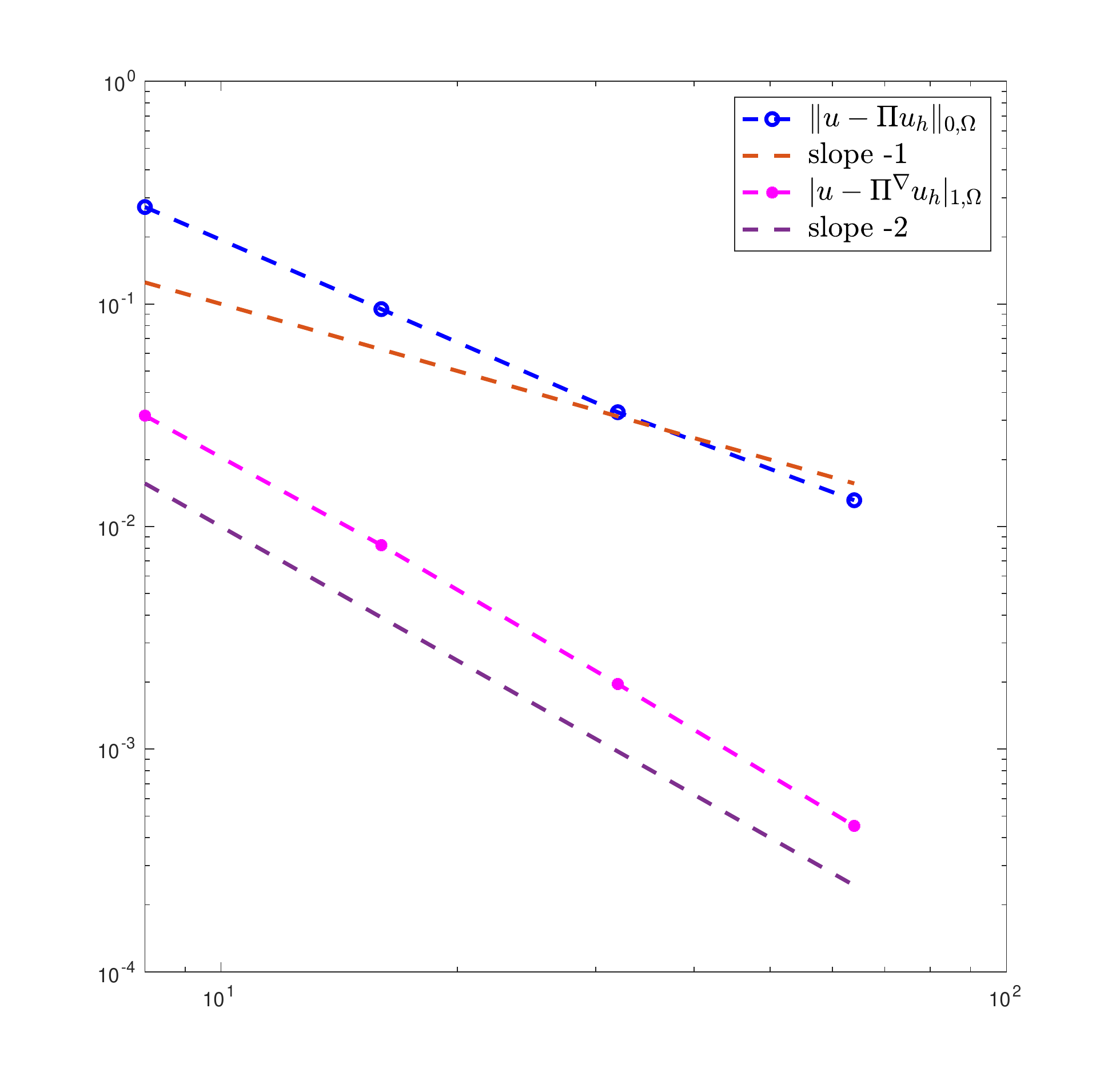}
\centering\includegraphics[height=5.0cm, width=4.0cm]{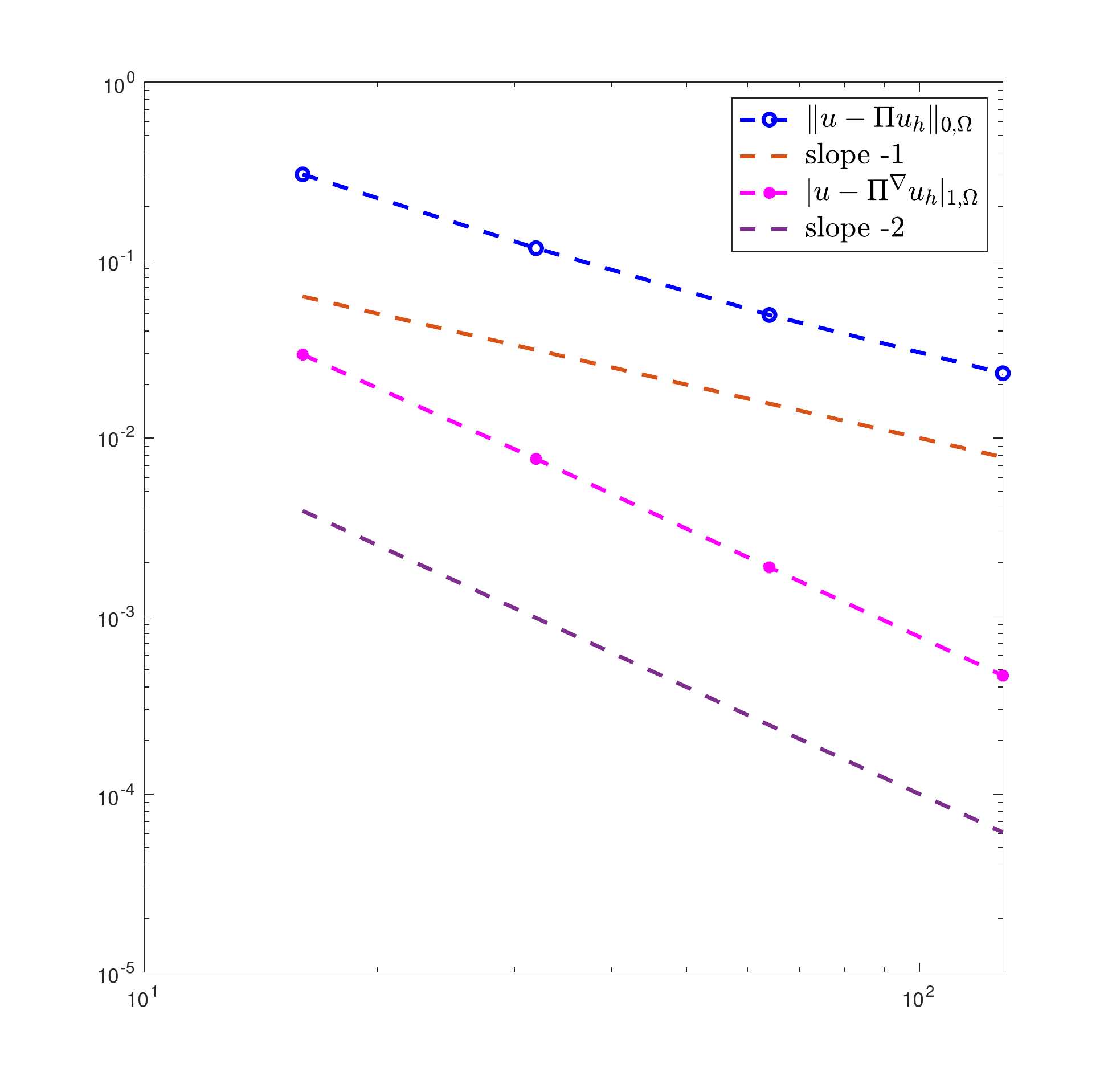}
\centering\includegraphics[height=5.0cm, width=4.0cm]{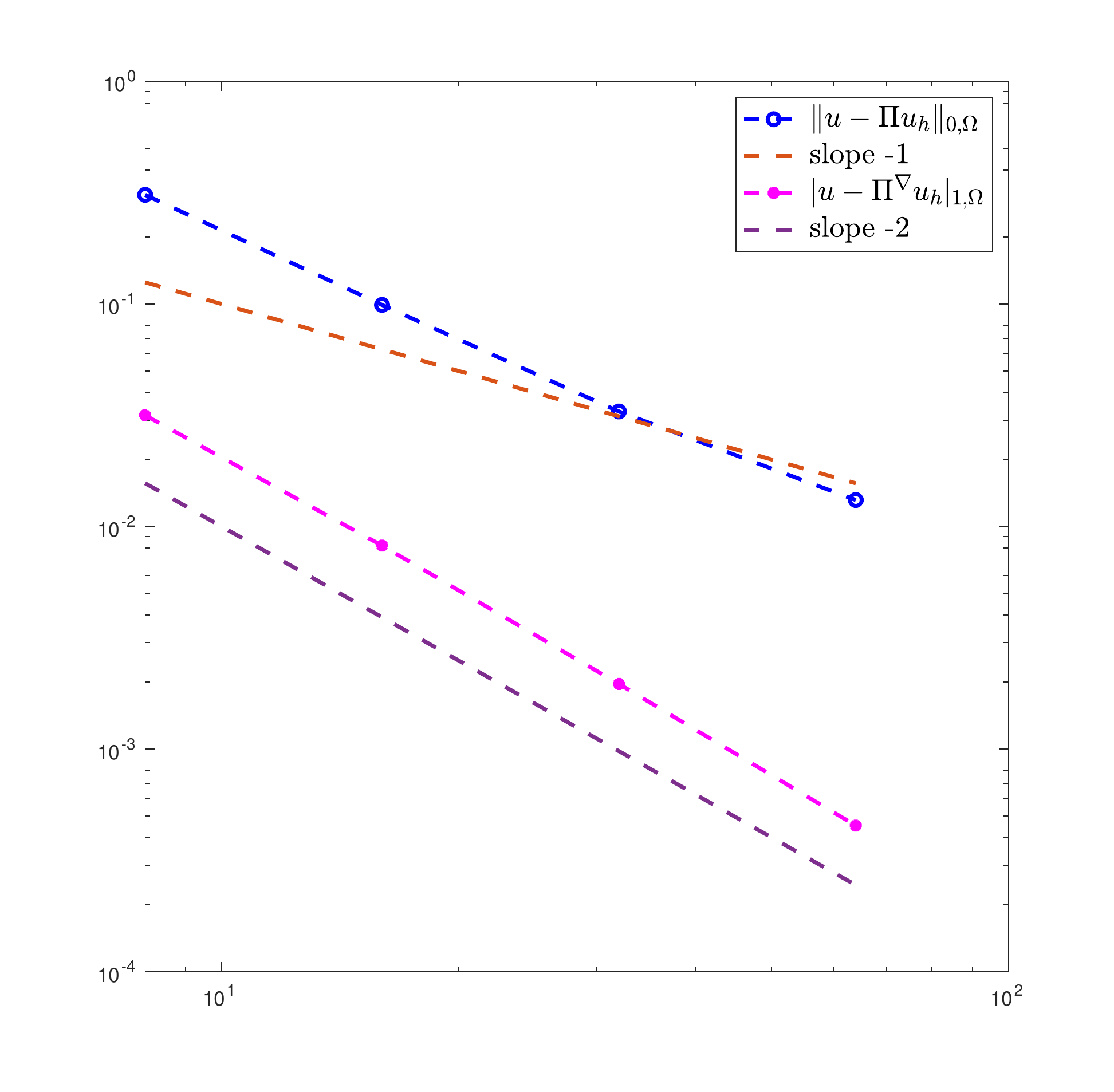}
\end{minipage}
\caption{Test 2: $L^2$ error with seminorm 1 error for $\CT_h^1$ (left), $\CT_h^2$ (middle)and $\CT_h^3$ (right).}
\label{FIG:error_no_constant}
\end{center}
\end{figure}

\subsection{The eigenvalue problem}
Now our task is to approximate the eigenvalues and eigenfunctions of problem \eqref{eq:spectral1} with our small edges approach. For this test we consider two scenarios: a convex and a non-convex domain. The order of convergence is computed with a least-square fitting.

\subsubsection{Unitary square}
Let us consider as computational domain 
the unit square $\O:=(0,1)^2$. This domain is discretized with meshes   presented in Figure~\ref{FIG:meshes}. The analytical solution to the convection-diffusion spectral problem is as follows (see \cite{MR2888313})
\begin{align}\nonumber
\lambda_{p,q}&=\dfrac{|\mathbf{\vartheta}|^2}{4\kappa}+\kappa\pi^2(p^2+q^2)=\lambda_{p,q}^*\qquad \text{for }p,q\in \mathbb{N}_+,\\
u_{p,q}(x,y)&=\exp\left(\dfrac{\mathbf{\vartheta}\cdot(x,y)}{2\kappa}\right)\sin(p\pi x)\sin(q\pi y),\\\label{eq:eigenex}
u_{p,q}^*(x,y)&=\exp\left(-\dfrac{\mathbf{\vartheta}\cdot(x,y)}{2\kappa}\right)\sin(p\pi x)\sin(q\pi y).\\\nonumber
\end{align}
For this test, we have used $\mathbf{\vartheta}(\mathbf{x})=(1,0)^t$ and $\kappa(\mathbf{x})=1$. In Table \ref{Table:1_spectral}, we report the first six eigenvalues computed with meshes $\CT_h^2$ and $\CT_h^3$. The row
``Order'' reports the convergence order of the eigenvalues, computed with respect to the exact ones obtained with \eqref{eq:eigenex}, which are presented in the row ``Exact”.
\begin{table}[h]
\begin{center}
\caption{Test~1. The lowest computed eigenvalues
$\l_{h}^{(i)}$, $1\le i\le 6$ for different meshes.}
\vspace{0.3cm}
\begin{tabular}{|c|c|c|c|c|c|c|}
\hline
\multicolumn{7}{|c|}{$\CT_{h}^{1}$} \\\hline
 $N$ & $\l_{h}^{(1)}$ & $\l_{h}^{(2)}$ & $\l_{h}^{(3)}$ & 
 $\l_{h}^{(4)}$ & $\l_{h}^{(5)}$& $\l_{h}^{(6)}$  \\
 \hline
8 &20.8691  & 52.3774  & 57.8481 &  93.6945 & 106.3515 & 134.1838\\
   16  & 20.1847  & 51.5506 &  50.2396 &  82.4563 & 100.6677  &107.8481\\
   32 &  20.0379  & 50.0865 &  49.7503 &  79.9785 &  99.3560&  101.1318\\
   64 &  20.0010  & 49.6345 & 49.7195 &  79.3947  & 99.0459 &  99.5049\\\hline
  Order&   2.08  &  2.08 &   2.03 &   2.09  &  2.07 &   2.00\\\hline
 Exact&   19.9892 &  49.5980 &  49.5980   &79.2068  & 98.9460 &  98.9460\\
\hline
\multicolumn{7}{|c|}{$\CT_{h}^{2}$} \\\hline
 $N$ & $\l_{h}^{(1)}$ & $\l_{h}^{(2)}$ & $\l_{h}^{(3)}$ & 
 $\l_{h}^{(4)}$ & $\l_{h}^{(5)}$& $\l_{h}^{(6)}$  \\
 \hline
8 &   20.8967 &  56.0401 &  56.1475 &  96.3325  & 125.4921 &  125.0578\\
16 &  20.2310 &  51.1774  & 51.1054 &  83.2877  & 105.0248 &  105.2693\\
32  & 20.0531 &  49.9872  & 49.9944 &  80.2748  & 100.5110  & 100.5074\\
64  & 20.0057  & 49.7015  & 49.7027  & 79.4760  & 99.3675   & 99.359\\\hline
Order& 1.93      &      1.99&   1.98    &           1.99  & 1.99  & 1.99\\\hline
Exact&      19.9892 &          49.5980     &       49.5980    &         79.2068    &         98.9460     &        98.9460 \\
\hline
\multicolumn{7}{|c|}{$\CT_{h}^{3}$} \\\hline
 $N$ & $\l_{h}^{(1)}$ & $\l_{h}^{(2)}$ & $\l_{h}^{(3)}$ & 
 $\l_{h}^{(4)}$ & $\l_{h}^{(5)}$& $\l_{h}^{(6)}$  \\
 \hline
8 &  20.8669 &  58.1517&   52.3612  & 94.0519  & 106.2181& 137.1134\\
16 & 20.1847 &  51.5743&   50.2385 &  82.4877 &  100.6575&  108.0124\\
32 & 20.0379 &  50.0877 &  49.7502 &  79.9805 &  99.3554   &101.1401\\
64 & 20.0010&  49.7196  &49.6345 & 79.3948 &  99.0459 &99.5054\\\hline
Order&2.07&   2.04 &  2.08  & 2.10 &  2.06  & 2.03\\\hline
Exact&19.9892        &   49.5980     &       49.5980    &         79.2068         &    98.9460    &         98.9460\\
\hline
\end{tabular}
\label{Table:1_spectral}
\end{center}
\end{table}
We observe from the reported results that for all  meshes, the eigenvalues converge with order $\mathcal{O}(h^2)$. Despite to the fact that the problem is non symmetric, the computed eigenvalues are all real. On the other hand, the eigenvalues converge to the exact eigenvalues that we show in the row "Exact". 

To end this test, in Figure \ref{FIG:eigenfunctions}
we present plots of the first, second and four eigenfunctions of our problem. 

\begin{figure}[h]
\begin{center}
\begin{minipage}{13cm}
\centering\includegraphics[height=4.cm, width=4cm]{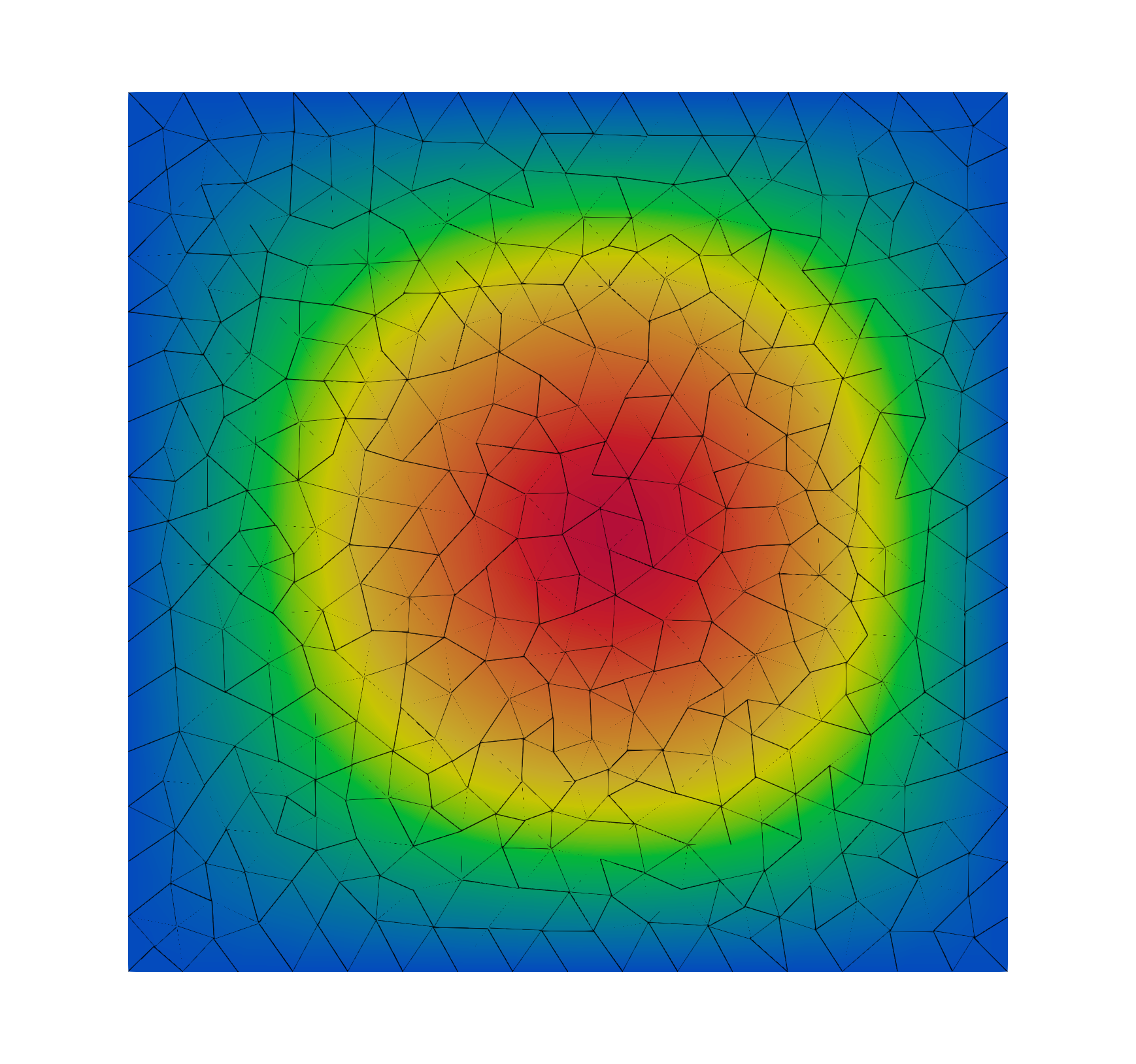}
\centering\includegraphics[height=4cm, width=4cm]{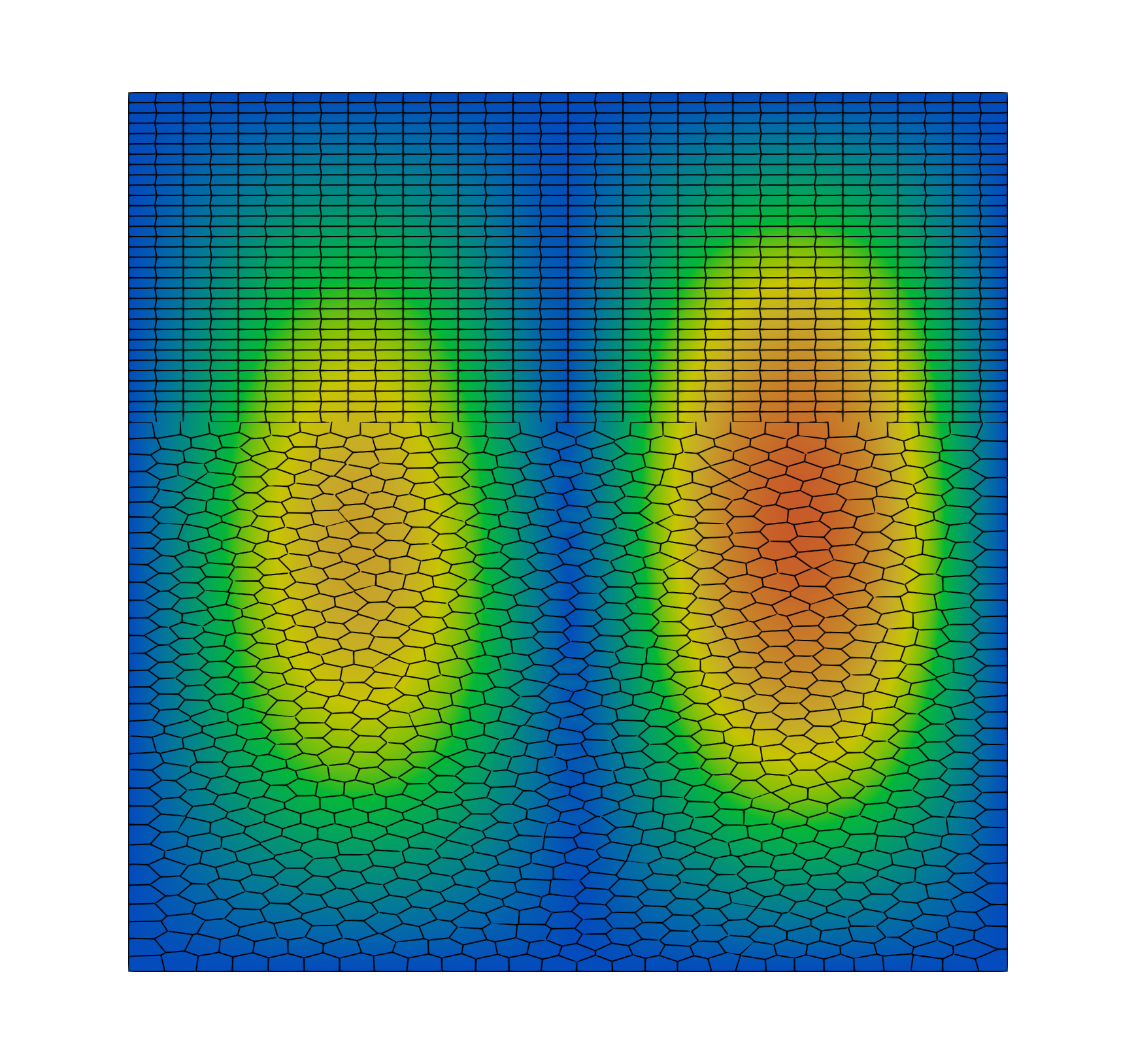}
\centering\includegraphics[height=4cm, width=4cm]{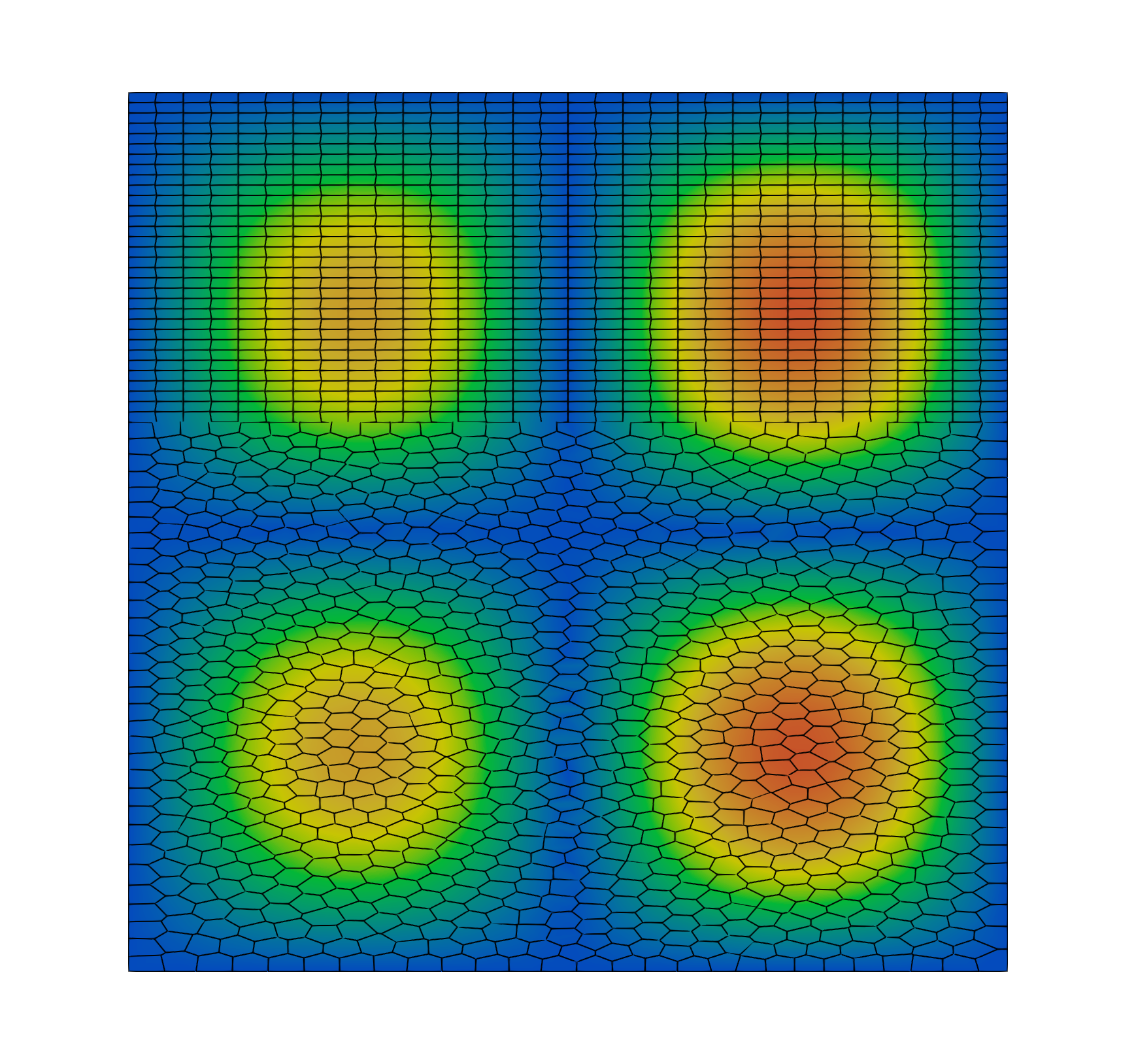}
\end{minipage}
\caption{From left to  right, plots of the first, second and four eigenfunctions, computed with  $\CT_h^2$ and $\CT_h^3$.}
\label{FIG:eigenfunctions}
\end{center}
\end{figure}

\subsubsection{Non convex domain}

In the following test we will consider a non-convex
domain which we call \emph{rotated T}, and it is defined
by $\Omega:=(-0.5, 0.5)\times(-0.5, 0)\cup (-0.25,0.25)\times(0,1)$
with boundary condition $u=0$ on the whole boundary $\partial\Omega$.
This non-convex domain presents two reentrant angles of the same size
$\omega=3\pi/2$ (cf. Figure~\ref{FIG:meshes2}),
and as a consequence, the eigenfunctions
of this problem may present singularities.

In Figure~\ref{FIG:meshes2}, we present the meshes
that we will consider for this numerical test.
We note that the families of polygonal meshes $\CT_h^3$, $\CT_h^4$ and $\CT_h^5$
have been obtained by gluing two different polygonal meshes at $x=0$.
It can be seen that very small edges compared with the element
diameter appears on the interface of the resulting meshes.
\begin{figure}[h]
\begin{center}
\begin{minipage}{13cm}
\centering\includegraphics[height=4.0cm, width=4.0cm]{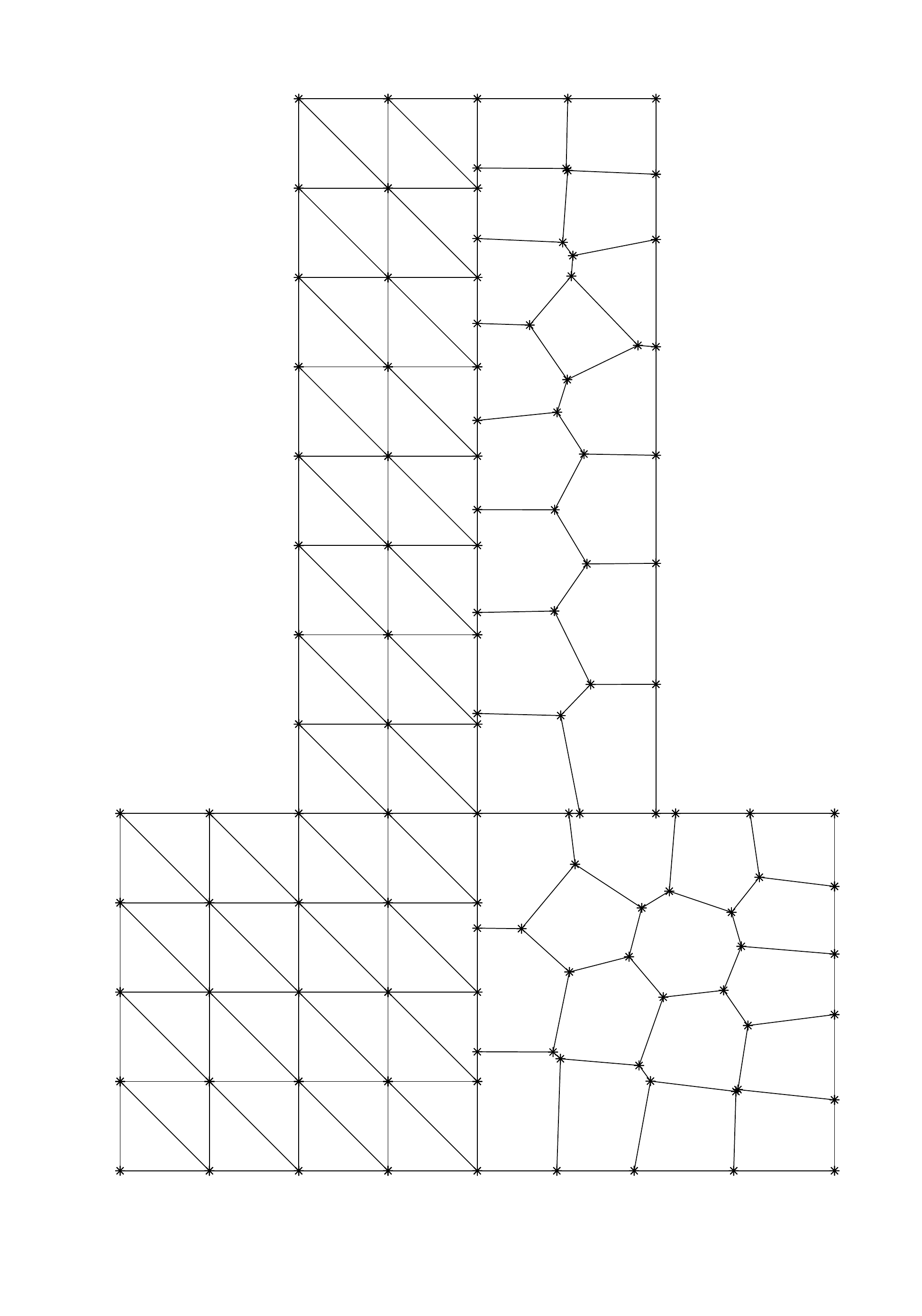}
\centering\includegraphics[height=4.0cm, width=4.0cm]{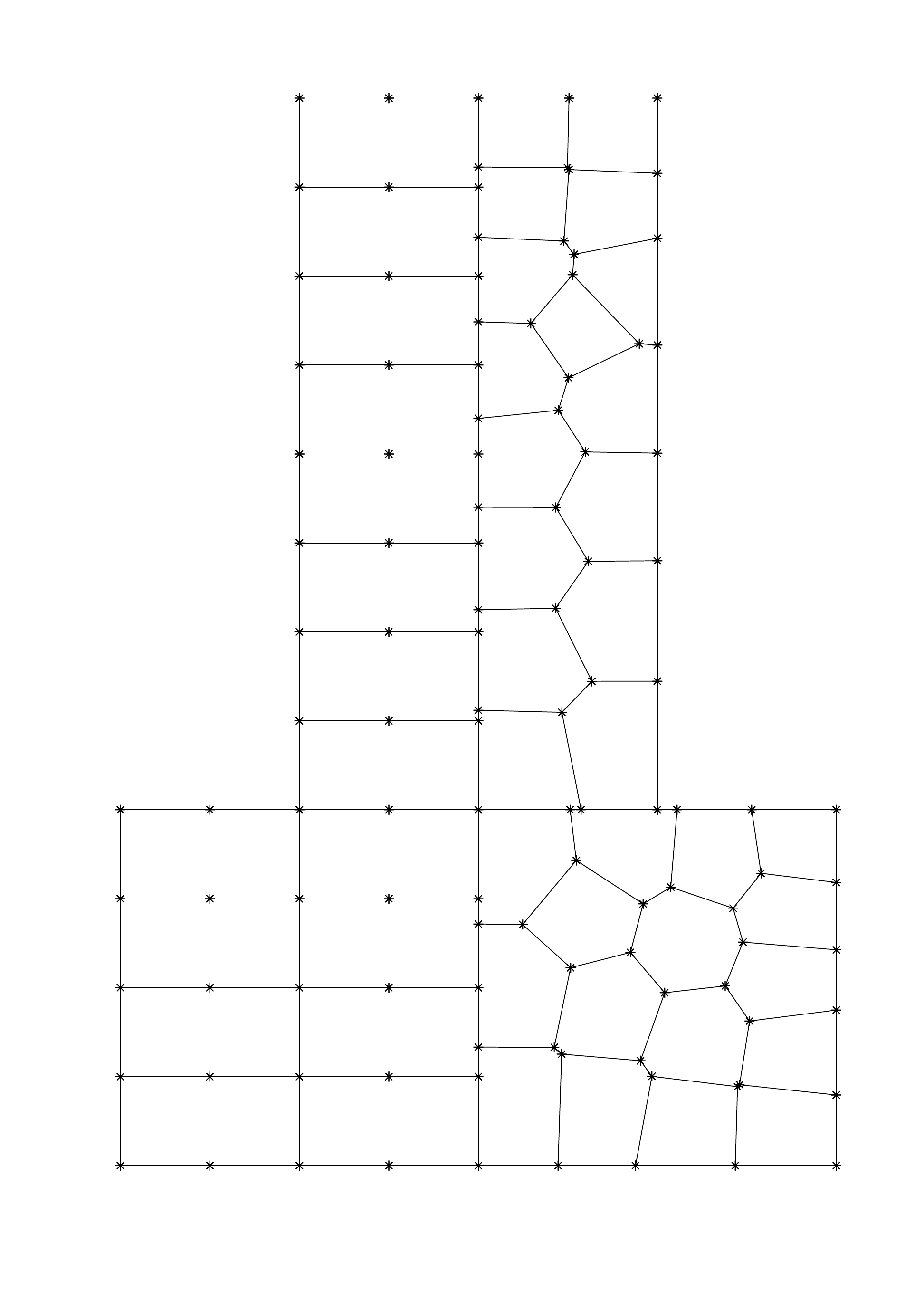}\\
\centering\includegraphics[height=4.0cm, width=4.0cm]{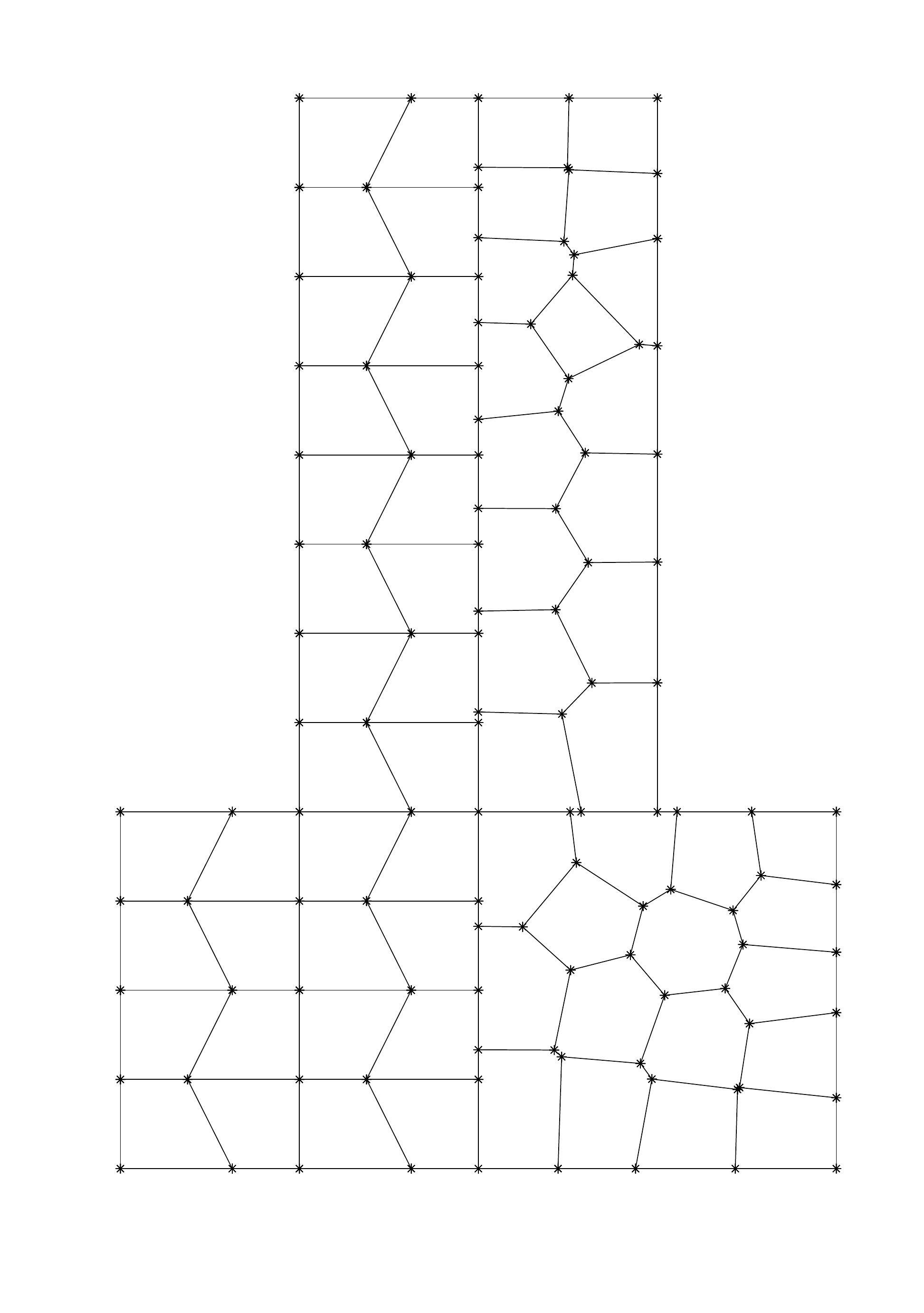}
  \centering\includegraphics[height=4.0cm, width=4.0cm]{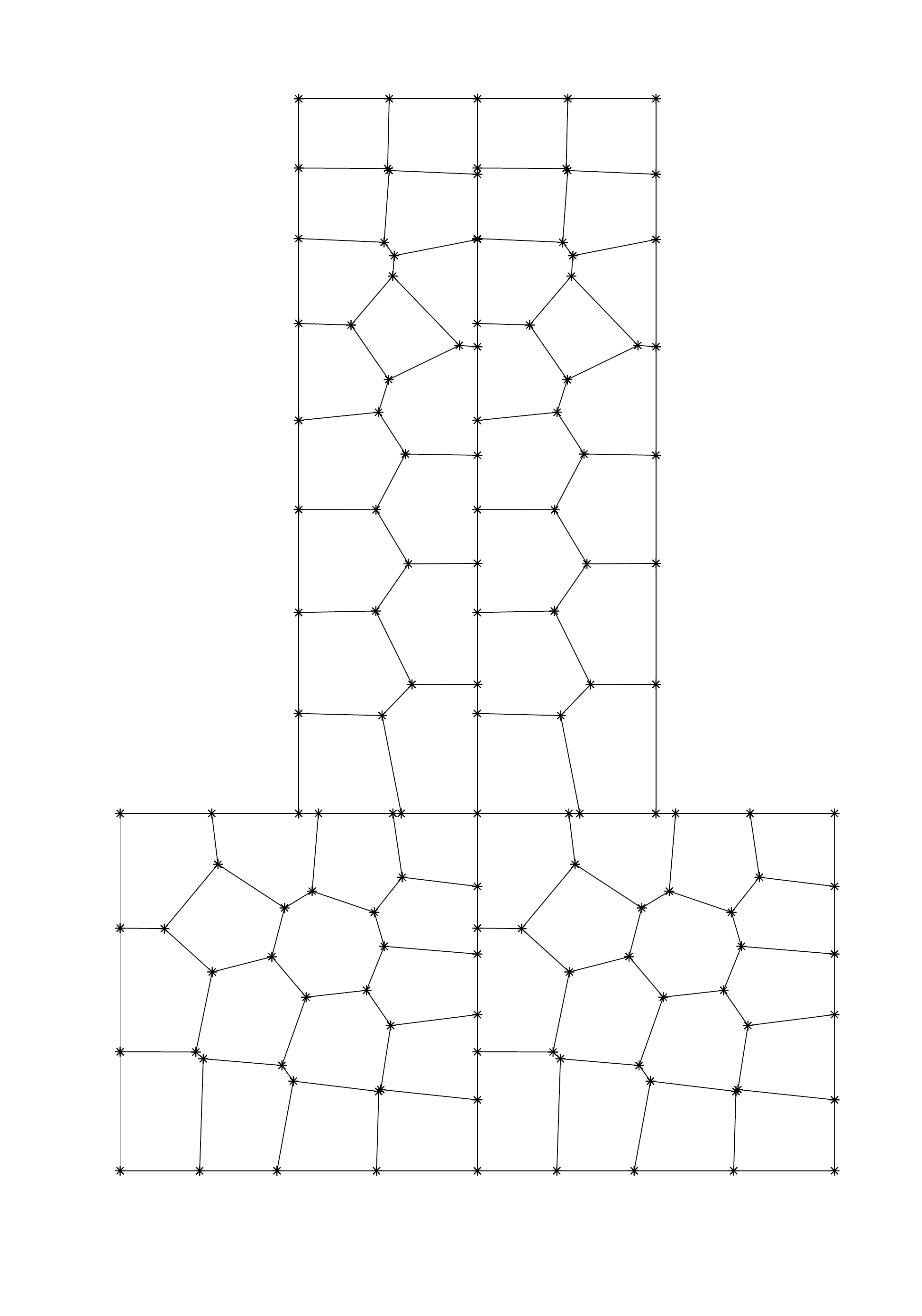}
\end{minipage}
\caption{Sample meshes with small edges. From top left to bottom right: $\CT_h^4$, $\CT_h^5$, $\CT_h^6$ and $\CT_h^7$, for $N=8$.}
\label{FIG:meshes2}
\end{center}
\end{figure}

In Table \ref{Table:T_spectral} we report the first six computed eigenvalues with our method. We claim that for this geometry we do not have
analytical solution. Hence, we compare our results with extrapolated values that we present in the row "Extrap". As in the previous example, the order of convergence, reported in the row "Order" have been computed with a least-square fitting.

\begin{table}[h]
\begin{center}
\caption{Test~1. The lowest computed eigenvalues
$\l_{h}^{(i)}$, $1\le i\le 6$ for different meshes.}
\vspace{0.3cm}
\begin{tabular}{|c|c|c|c|c|c|c|}
\hline
\multicolumn{7}{|c|}{$\CT_{h}^{4}$} \\\hline
 $N$ & $\l_{h}^{(1)}$ & $\l_{h}^{(2)}$ & $\l_{h}^{(3)}$ & 
 $\l_{h}^{(4)}$ & $\l_{h}^{(5)}$& $\l_{h}^{(6)}$  \\
 \hline
   16 &  35.8647&   50.9369&   74.3495  & 79.8639&  107.1151&  138.1449\\
   30  & 34.9179  & 49.8908  & 72.0953  & 76.7690  &102.5379  &130.5160\\
   62   &34.5028 &  49.6345 &  71.5289  & 75.8149 & 101.3815  &128.6860\\
  130  & 34.3804  & 49.5724  & 71.3967 &  75.5511 & 101.0999 & 128.2229\\\hline
  Order&   1.50  &  2.28    &2.25  & 1.95    &2.24   & 2.09\\\hline
    Extrap.&   34.3074 &  49.5656  & 71.3771&   75.4885 & 101.0644 & 127.9790\\
\hline
\multicolumn{7}{|c|}{$\CT_{h}^{5}$} \\\hline
 $N$ & $\l_{h}^{(1)}$ & $\l_{h}^{(2)}$ & $\l_{h}^{(3)}$ & 
 $\l_{h}^{(4)}$ & $\l_{h}^{(5)}$& $\l_{h}^{(6)}$  \\
 \hline
16 & 36.1795 &  51.3140&   75.4220  & 81.3284 & 109.2599  &141.7893\\
   30&   35.0277 &  49.9852&   72.3584&   77.1608 & 103.0639  &131.3561\\
   62  & 34.5422  & 49.6585  & 71.5953 &  75.9265 & 101.5153 & 128.8964\\
  130   &34.3950  & 49.5786   &71.4136  & 75.5848&  101.1348 & 128.2769\\\hline
  Order&   1.54  &  2.27   & 2.26   & 2.00   & 2.25   & 2.32\\\hline
    Extrap.&  34.3172 &  49.5695 &  71.3900&   75.5147  &101.0894  &128.2296\\
\hline
\multicolumn{7}{|c|}{$\CT_{h}^{6}$} \\\hline
 $N$ & $\l_{h}^{(1)}$ & $\l_{h}^{(2)}$ & $\l_{h}^{(3)}$ & 
 $\l_{h}^{(4)}$ & $\l_{h}^{(5)}$& $\l_{h}^{(6)}$  \\
 \hline
   16  & 36.1709 &  51.3411  & 75.4263 &  81.3049 & 109.2335 & 141.8461\\
   30  & 35.0258 &  49.9870&   72.3587&   77.1569 & 103.0624 & 131.3610\\
   62   &34.5418 &  49.6586 &  71.5954  & 75.9258  &101.5152  &128.8966\\
  130  & 34.3949  & 49.5786  & 71.4136  & 75.5846  &101.1348 & 128.2769\\\hline
 Order&   1.54   & 2.29   & 2.26   & 1.99  & 2.24   & 2.33\\\hline
    Extrap.&     34.3180 &  49.5702  & 71.3897 &  75.5101  &101.0851  &128.2359\\
\hline
\multicolumn{7}{|c|}{$\CT_{h}^{7}$} \\\hline
 $N$ & $\l_{h}^{(1)}$ & $\l_{h}^{(2)}$ & $\l_{h}^{(3)}$ & 
 $\l_{h}^{(4)}$ & $\l_{h}^{(5)}$& $\l_{h}^{(6)}$  \\
 \hline
16 & 36.1532&   51.2579  & 75.0659  & 80.8267  &108.4547 & 140.4589\\
   28&  35.0908  & 49.9516   &72.2464  & 77.0487&  102.8661  &130.9399\\
   60 &  34.5461&  49.6508  & 71.5580  & 75.8867  &101.4574 & 128.7725\\
  132 &  34.3937 &  49.5760 &  71.4041&   75.5698&  101.1188 &128.2443\\\hline
  Order&  1.56  &  2.69   & 2.62   & 2.23  & 2.56   & 2.71\\\hline
    Extrap.&  34.3209  & 49.5847 &  71.4127  & 75.5577 & 101.1394 & 128.3053\\
\hline
\end{tabular}
\label{Table:T_spectral}
\end{center}
\end{table}

From Table \ref{Table:T_spectral} it is possible to observe the effects of the singularities of the domain on
the computed order of convergence for the first eigenvalue. Clearly the eigenfunction associated to this 
eigenvalue is non smooth, which precisely affects the order of convergence. We remark that this phenomenon 
occurs for each of the meshes considered for this test.  On the other hand, the rest of the computed eigenvalues converge to the 
extrapolated values with order $\mathcal{O}(h^2)$ as is expected.

Once again, the computed eigenvalues for this test are real and no complex eigenvalues have been observed.
We end our test presenting  plots for the first four eigenfunctions  of the spectral problem  in Figure \ref{FIG:eigenfunctionsT}.

\begin{figure}[h]
\begin{center}
\begin{minipage}{11cm}
\centering\includegraphics[height=4cm, width=4.2cm]{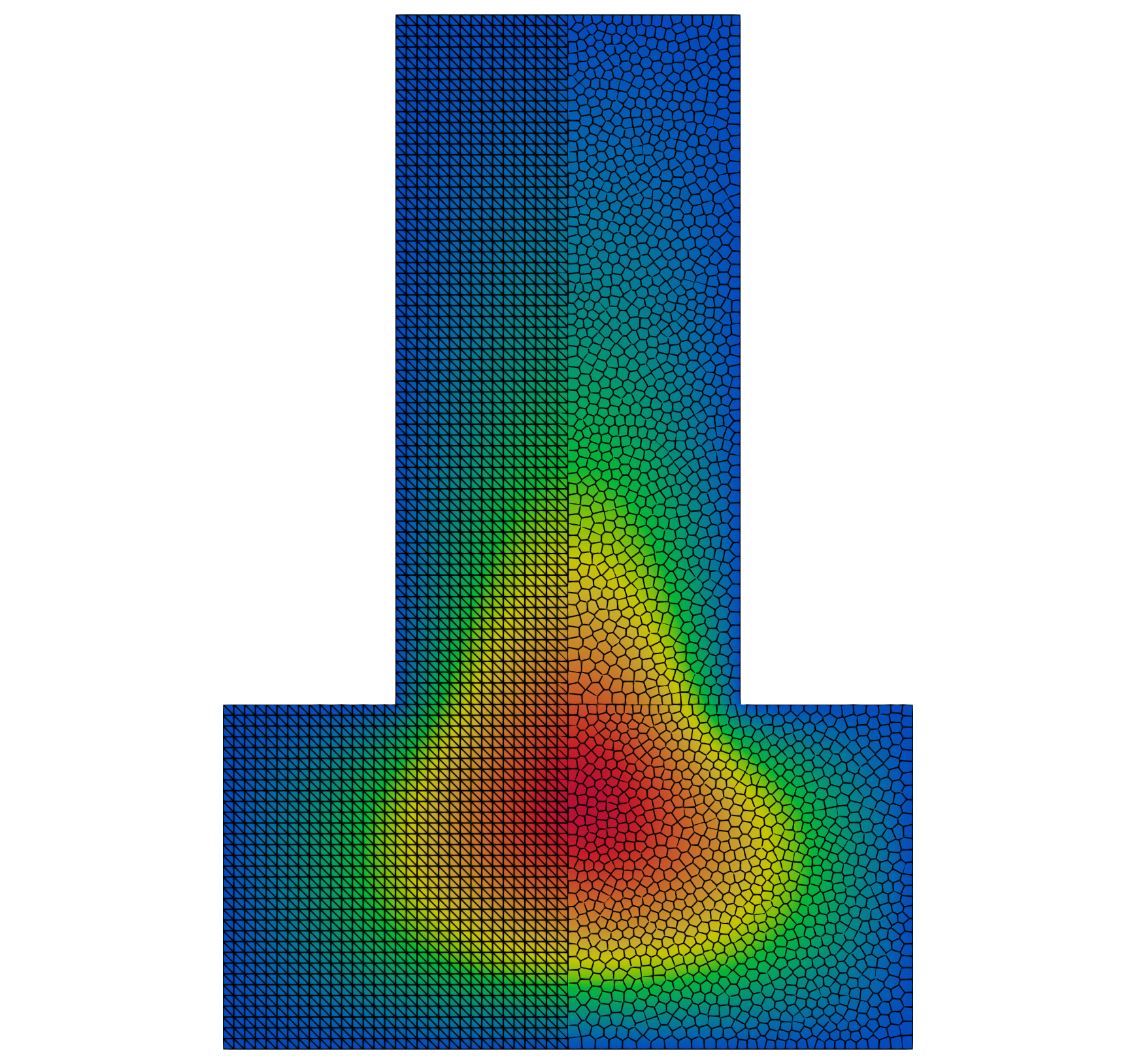}
\centering\includegraphics[height=4cm, width=4.2cm]{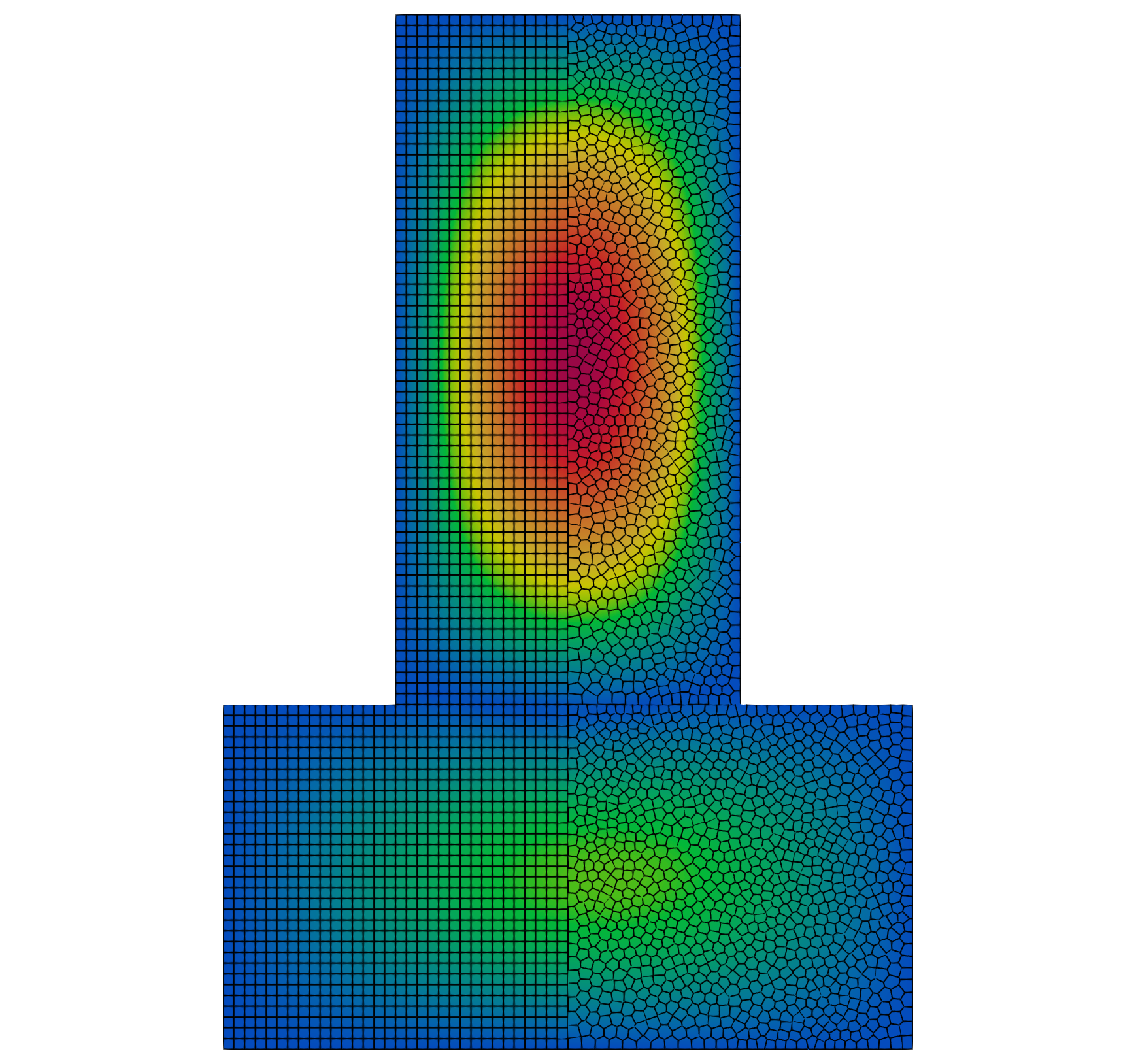}\\
\centering\includegraphics[height=4cm, width=4.2cm]{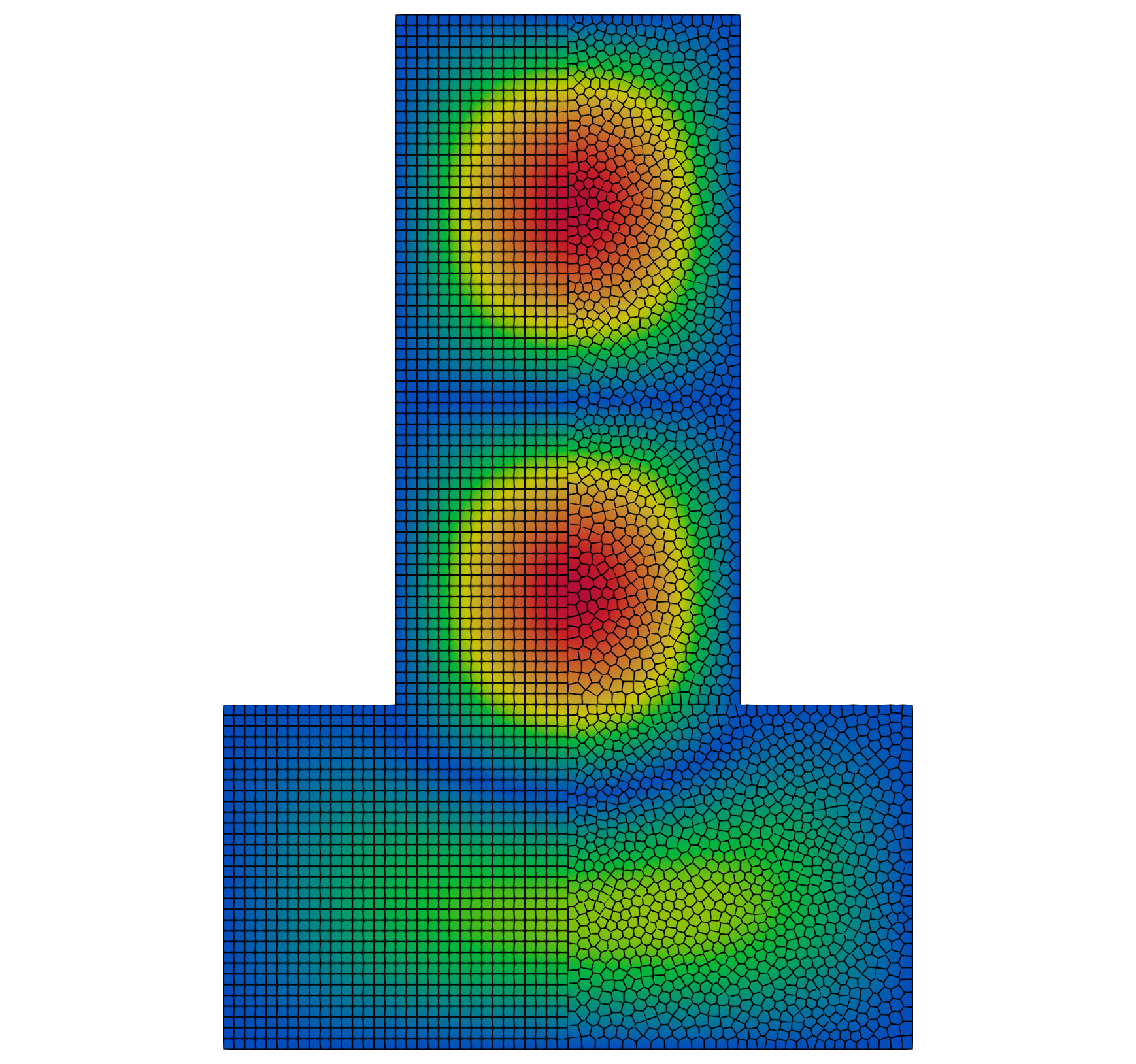}
\centering\includegraphics[height=4cm, width=4.2cm]{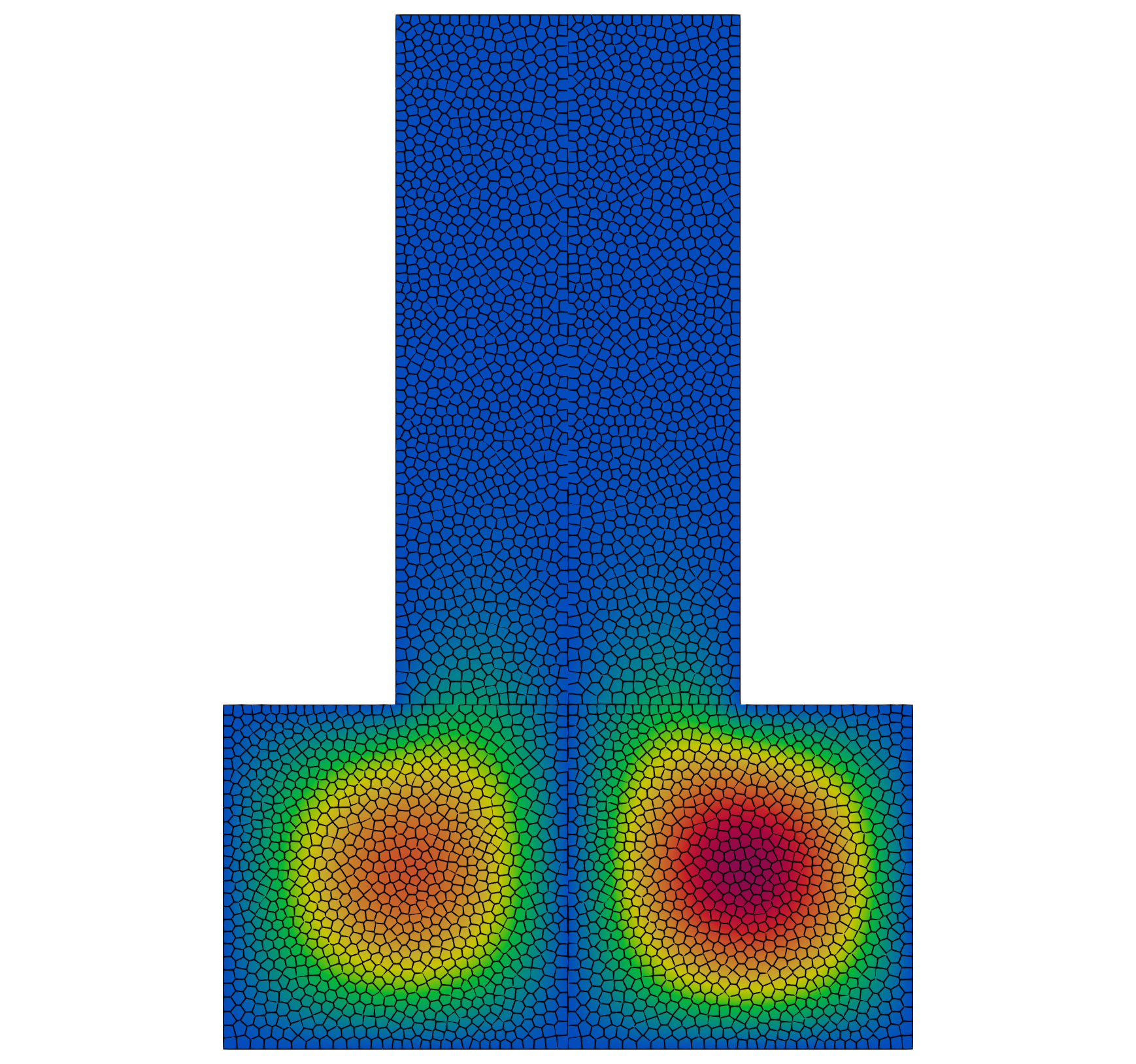}
\end{minipage}
\caption{From top left to bottom right, plots of the first four eigenfunctions for the rotated T domain, computed with  $\CT_h^4$, $\CT_h^5$, $\CT_h^6$ and $\CT_h^7$, respectively.}
\label{FIG:eigenfunctionsT}
\end{center}
\end{figure}

\bibliographystyle{siam} 
\bibliography{LR3}

\begin{thebibliography}{10}

\bibitem{MR4441208}
{\sc D.~Adak, G.~Manzini, and S.~Natarajan}, {\em Virtual element approximation
  of two-dimensional parabolic variational inequalities}, Comput. Math. Appl.,
  116 (2022), pp.~48--70.

\bibitem{AABMR13}
{\sc B.~Ahmad, A.~Alsaedi, F.~Brezzi, L.~D. Marini, and A.~Russo}, {\em
  Equivalent projectors for virtual element methods}, Comput. Math. Appl., 66
  (2013), pp.~376--391.

\bibitem{ALR:22}
{\sc D.~Amigo, F.~Lepe, and G.~Rivera}, {\em A virtual element method for the
  elasticity problem allowing small edges}, arXiv:2211.02792,  (2022).

\bibitem{ABM2022}
{\sc P.~F. Antonietti, L.~Beir\~{a}o~da Veiga, and G.~Manzini}, {\em The
  Virtual Element Method and its Applications}, vol.~31, SEMA SIMAI Springer
  Series, 2022.

\bibitem{ABMV2014}
{\sc P.~F. Antonietti, L.~Beir\~{a}o~da Veiga, D.~Mora, and M.~Verani}, {\em A
  stream virtual element formulation of the {S}tokes problem on polygonal
  meshes}, SIAM J. Numer. Anal., 52 (2014), pp.~386--404.

\bibitem{ABSVsinum16}
{\sc P.~F. Antonietti, L.~Beir\~{a}o~da Veiga, S.~Scacchi, and M.~Verani}, {\em
  A {$C^1$} virtual element method for the {C}ahn-{H}illiard equation with
  polygonal meshes}, SIAM J. Numer. Anal., 54 (2016), pp.~34--56.

\bibitem{AMSLP2018}
{\sc E.~Artioli, S.~de~Miranda, C.~Lovadina, and L.~Patruno}, {\em A family of
  virtual element methods for plane elasticity problems based on the
  {H}ellinger-{R}eissner principle}, Comput. Methods Appl. Mech. Engrg., 340
  (2018), pp.~978--999.

\bibitem{BO}
{\sc I.~Babu\v{s}ka and J.~Osborn}, {\em Eigenvalue problems}, Handb. Numer.
  Anal., II, North-Holland, Amsterdam, 1991.

\bibitem{BBCMMR2013}
{\sc L.~Beir\~{a}o~da Veiga, F.~Brezzi, A.~Cangiani, G.~Manzini, L.~D. Marini,
  and A.~Russo}, {\em Basic principles of virtual element methods}, Math.
  Models Methods Appl. Sci., 23 (2013), pp.~199--214.

\bibitem{BBDMR2017}
{\sc L.~Beir\~{a}o~da Veiga, F.~Brezzi, F.~Dassi, L.~D. Marini, and A.~Russo},
  {\em Virtual element approximation of 2{D} magnetostatic problems}, Comput.
  Methods Appl. Mech. Engrg., 327 (2017), pp.~173--195.

\bibitem{BBM}
{\sc L.~Beir\~{a}o~da Veiga, F.~Brezzi, and L.~D. Marini}, {\em Virtual
  elements for linear elasticity problems}, SIAM J. Numer. Anal., 51 (2013),
  pp.~794--812.

\bibitem{MR3460621}
{\sc L.~Beir\~{a}o~da Veiga, F.~Brezzi, L.~D. Marini, and A.~Russo}, {\em
  Virtual element method for general second-order elliptic problems on
  polygonal meshes}, Math. Models Methods Appl. Sci., 26 (2016), pp.~729--750.

\bibitem{BLR2017}
{\sc L.~Beir\~{a}o~da Veiga, C.~Lovadina, and A.~Russo}, {\em Stability
  analysis for the virtual element method}, Math. Models Methods Appl. Sci., 27
  (2017), pp.~2557--2594.

\bibitem{MR2652780}
{\sc D.~Boffi}, {\em Finite element approximation of eigenvalue problems}, Acta
  Numer., 19 (2010), pp.~1--120.

\bibitem{BS-2008}
{\sc S.~C. Brenner and L.~R. Scott}, {\em The Mathematical Theory of Finite
  Element Methods}, Springer, New York, 2008.

\bibitem{MR3815658}
{\sc S.~C. Brenner and L.Y. Sung}, {\em Virtual element methods on meshes with
  small edges or faces}, Math. Models Methods Appl. Sci., 28 (2018),
  pp.~1291--1336.

\bibitem{CG2017}
{\sc E.~C\'{a}ceres and G.~N. Gatica}, {\em A mixed virtual element method for
  the pseudostress-velocity formulation of the {S}tokes problem}, IMA J. Numer.
  Anal., 37 (2017), pp.~296--331.

\bibitem{CGPS}
{\sc A.~Cangiani, E.~H. Georgoulis, T.~Pryer, and O.~J. Sutton}, {\em A
  posteriori error estimates for the virtual element method}, Numer. Math., 137
  (2017), pp.~857--893.

\bibitem{MR2845628}
{\sc C.~Carstensen, J.~Gedicke, V.~Mehrmann, and A.~Miedlar}, {\em An adaptive
  homotopy approach for non-selfadjoint eigenvalue problems}, Numer. Math., 119
  (2011), pp.~557--583.

\bibitem{MR4359996}
{\sc J.~Droniou and L.~Yemm}, {\em Robust hybrid high-order method on polytopal
  meshes with small faces}, Comput. Methods Appl. Math., 22 (2022), pp.~47--71.

\bibitem{FS-M2AN18}
{\sc M.~Frittelli and I.~Sgura}, {\em Virtual element method for the
  {L}aplace-{B}eltrami equation on surfaces}, ESAIM Math. Model. Numer. Anal.,
  52 (2018), pp.~965--993.

\bibitem{GMV2018}
{\sc F.~Gardini, G.~Manzini, and G.~Vacca}, {\em The nonconforming virtual
  element method for eigenvalue problems}, ESAIM Math. Model. Numer. Anal., 53
  (2019), pp.~749--774.

\bibitem{GV:IMA2017}
{\sc F.~Gardini and G.~Vacca}, {\em Virtual element method for second-order
  elliptic eigenvalue problems}, IMA J. Numer. Anal., 38 (2018),
  pp.~2026--2054.

\bibitem{MR3133493}
{\sc J.~Gedicke and C.~Carstensen}, {\em A posteriori error estimators for
  convection-diffusion eigenvalue problems}, Comput. Methods Appl. Mech.
  Engrg., 268 (2014), pp.~160--177.

\bibitem{MR851383}
{\sc Vivette Girault and Pierre-Arnaud Raviart}, {\em Finite element methods
  for {N}avier-{S}tokes equations}, vol.~5 of Springer Series in Computational
  Mathematics, Springer-Verlag, Berlin, 1986.
\newblock Theory and algorithms.

\bibitem{MR0203473}
{\sc T.~Kato}, {\em Perturbation theory for linear operators}, Die Grundlehren
  der mathematischen Wissenschaften, Band 132, Springer-Verlag New York, Inc.,
  New York, 1966.

\bibitem{MR4284360}
{\sc F.~Lepe, D.~Mora, G.~Rivera, and I.~Vel\'{a}squez}, {\em A virtual element
  method for the {S}teklov eigenvalue problem allowing small edges}, J. Sci.
  Comput., 88 (2021), pp.~Paper No. 44, 21.

\bibitem{MR4253143}
{\sc F.~Lepe and G.~Rivera}, {\em A priori error analysis for a mixed {VEM}
  discretization of the spectral problem for the {L}aplacian operator},
  Calcolo, 58 (2021), pp.~Paper No. 20, 30.

\bibitem{MR4229296}
\leavevmode\vrule height 2pt depth -1.6pt width 23pt, {\em A virtual element
  approximation for the pseudostress formulation of the {S}tokes eigenvalue
  problem}, Comput. Methods Appl. Mech. Engrg., 379 (2021), pp.~Paper No.
  113753, 21.

\bibitem{MRR2015}
{\sc D.~Mora, G.~Rivera, and R.~Rodr\'{i}guez}, {\em A virtual element method
  for the {S}teklov eigenvalue problem}, Math. Models Methods Appl. Sci., 25
  (2015), pp.~1421--1445.

\bibitem{MR2888313}
{\sc A.~Naga and Z.~Zhang}, {\em Function value recovery and its application in
  eigenvalue problems}, SIAM J. Numer. Anal., 50 (2012), pp.~272--286.

\bibitem{MR4461634}
{\sc J.~Tushar, A.~Kumar, and S.~Kumar}, {\em Virtual element methods for
  general linear elliptic interface problems on polygonal meshes with small
  edges}, Comput. Math. Appl., 122 (2022), pp.~61--75.

\bibitem{WRR2016}
{\sc P.~Wriggers, W.~T. Rust, and B.~D. Reddy}, {\em A virtual element method
  for contact}, Comput. Mech., 58 (2016), pp.~1039--1050.

\end{thebibliography}
\end{document}